\newtheorem{theorem}{Theorem}[section]
\newtheorem{proposition}[theorem]{Proposition}
\newtheorem{lemma}[theorem]{Lemma}
\newtheorem{corollary}[theorem]{Corollary}
\newtheorem{conjecture}[theorem]{Conjecture}
\newcommand{\Sym}{\mathrm{Sym}}
\newcommand{\Cay}{\mathrm{Cay}}
\newcommand{\Aut}{\mathrm{Aut}}
\newcommand{\Soc}{\mathrm{Soc}}
\newcommand{\I}{\mathcal{I}}
\newcommand{\N}{\mathbf{N}}
\newcommand{\Core}{\mathrm{Core}}
\newcommand{\CG}{\mathrm{CG}}
\newcommand{\GRR}{\mathrm{GRR}}
\newcommand\Cy{\mathrm{C}}
\newcommand\Quat{\mathrm{Q}}
\numberwithin{equation}{section}
\title{\textbf{Asymptotic enumeration of graphical regular representations}}
\author[Xia]{Binzhou Xia}
\address{(Xia) School of Mathematics and Statistics\\The University of Melbourne\\Parkville 3010\\VIC\\Australia}
\email{binzhoux@unimelb.edu.au}
\author[Zheng]{Shasha Zheng}
\address{(Zheng) School of Mathematics and Statistics\\The University of Melbourne\\Parkville 3010\\VIC\\Australia}
\email{zhesz@student.unimelb.edu.au}
\begin{document}

\begin{abstract}
We estimate the number of graphical regular representations (GRRs) of a given group with large enough order. As a consequence, we show that almost all finite Cayley graphs have full automorphism groups `as small as possible'. This confirms a conjecture of Babai-Godsil-Imrich-Lov\'{a}sz on the proportion of GRRs, as well as a conjecture of Xu on the proportion of normal Cayley graphs, among Cayley graphs of a given finite group.

\textit{Key words:} Cayley graph; graphical regular representation; asymptotic enumeration; Babai-Godsil-Imrich-Lov\'{a}sz Conjecture; Xu Conjecture

\textit{MSC2020:} 05C25, 05C30, 05E18, 20B25

\end{abstract}

\maketitle

\section{Introduction}

Let $R$ be a finite group and $S$ a subset of $R$. The \emph{Cayley digraph} of $R$ with \emph{connection set} $S$, denoted by $\Cay(R,S)$, is defined as the digraph with vertex set $R$ such that for $u,v\in R$, $(u,v)$ is an arc if and only if $vu^{-1}\in S$. Note that $1\in S$ is allowed in our definition of Cayley digraphs and that a Cayley digraph $\Cay(R,S)$ is a graph if and only if $S=S^{-1}$. For convenience, we identify a group $R$ with its right regular permutation representation, which is obviously a subgroup of $\Aut(\Cay(R,S))$. A digraph $\Gamma$ is said to be a \emph{digraphical regular representation} (DRR for short) of a group $R$ if its full automorphism group $\Aut(\Gamma)$ is isomorphic to $R$ and acts on the vertex set of $\Gamma$ as a regular permutation group. Thus DRRs of $R$ are precisely Cayley digraphs of $R$ with full automorphism group as small as $R$. If a DRR of $R$ is a graph, then it is called a \emph{graphical regular representation} (GRR for short) of $R$.

It is natural to ask which finite groups admit DRRs or GRRs. The question for GRRs was studied in a series of papers~\cite{Chao1964,Nowitz1968,Sabidussi1964,Watkins1971} before a complete answer was given by Godsil~\cite{Godsil1978} in 1978: Apart from two infinite families and thirteen small solvable groups (of order at most $32$), every finite group admits a GRR. Here one of the two infinite families consists of abelian groups of exponent greater than $2$ (see \cite{Chao1964,Sabidussi1964}). The other family is the so called generalized dicyclic groups, namely, non-abelian groups $R$ that have an abelian normal subgroup $A$ of index $2$ and an element $x\in R\setminus A$ of order $4$ such that $x^{-1}ax=a^{-1}$ for all $a\in A$ (see \cite{Nowitz1968,Watkins1971}). The situation for DRRs is simpler, as shown by Babai~\cite{Babai1980}: $\Cy_2^2$, $\Cy_2^3$, $\Cy_2^4$, $\Cy_3^2$ and $\Quat_8$ are the only five groups without DRRs.

It was conjectured by Babai and Godsil~\cite{Babai-Godsil1982,Godsil1981} that almost all finite Cayley digraphs are DRRs. This conjecture has been confirmed recently by Morris and Spiga~\cite{Morris-S2021}. In fact, they proved the following quantified version:

\begin{theorem}[Morris-Spiga]\label{Thm:MS}
Let $R$ be a group of order $r$, where $r$ is sufficiently large. The proportion of subsets $S$ of $R$ such that $\Cay(R,S)$ is a DRR of $R$ is at least $1-2^{-\frac{br^{0.499}}{4\log_2^3r}+2}$, where $b$ is an absolute constant.

\end{theorem}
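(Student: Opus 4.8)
The plan is to bound the number of \emph{bad} connection sets, meaning those $S\subseteq R$ for which $\Cay(R,S)$ is not a DRR, and to show this number is at most $2^{r}\cdot 2^{-br^{0.499}/(4\log_2^3 r)+2}$. Write $\Gamma=\Cay(R,S)$ and $G=\Aut(\Gamma)$. Since $R$ acts regularly, $\Gamma$ is a DRR precisely when $G=R$, equivalently when the point stabiliser $G_1$ is trivial, so $S$ is bad iff $G>R$. I would split the bad sets into those with $R\trianglelefteq G$ and those with $R\ntrianglelefteq G$, and treat the latter family by a further stratification according to the block structure of $G$: a nontrivial block system with small blocks, versus a ``large'', essentially primitive, section.

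\textbf{The normal case.} If $R\trianglelefteq G$ then $G=R\rtimes G_1$, and conjugation identifies $G_1$ with the subgroup of $\Aut(R)$ fixing $S$ setwise; in particular some nonidentity $\alpha\in\Aut(R)$ satisfies $\alpha(S)=S$, so $S$ is a union of $\langle\alpha\rangle$-orbits on $R$. The fixed-point set of $\alpha$ is a proper subgroup, hence of index at least $2$, so $\alpha$ has at most $(r+r/2)/2=3r/4$ orbits and therefore at most $2^{3r/4}$ subsets of $R$ are $\alpha$-invariant. Since $R$ is generated by at most $\log_2 r$ elements we have $|\Aut(R)|\le r^{\log_2 r}$, so the bad sets in this family number at most $r^{\log_2 r}\,2^{3r/4}$, an exponentially small fraction of $2^r$ and hence comfortably within the asserted bound.

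\textbf{The non-normal case, and the main obstacle.} Now $R$ is a non-normal regular subgroup of $G$; this is where essentially all the work — and the exponent $0.499$ — lies. If $G$ is primitive one invokes the classification of finite primitive permutation groups containing a regular subgroup: either $\mathrm{Alt}(r)\le G$, which forces $\Gamma$ (invariant under a $2$-transitive group) to be empty or complete and leaves only $O(1)$ choices of $S$, or $G$ is of one of a bounded number of affine, product-action or almost simple types, for each of which the number of $G$-invariant arc sets is subexponential in $r$. Otherwise $G$ is imprimitive; one fixes a $G$-block system $\mathcal B$ (necessarily a system of cosets of a subgroup of $R$ of order strictly between $1$ and $r$, of which there are at most $r^{\log_2 r}$) and exploits the following mechanism: a nonidentity $a\in G_1$, together with the regular action of $R$, forces many \emph{local coincidences} among the entries of $S$ — for every vertex $v$ fixed by $a$ the out-neighbourhood $Sv$ is $a$-invariant, so $S$ is $a$-structured on each translate of the support of $a$ by such a $v$, and pushing these constraints around by right translation shows that $S$ is constrained on a portion of $R$ of size about $r^{0.499}$. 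After accounting for the choices of block system and of a suitably chosen low-support $a$ — the bookkeeping for this certificate is what introduces the $\log_2^3 r$ in the denominator — this reduces the number of bad $S$ of this type by a factor $2^{\Theta(r^{0.499}/\log_2^3 r)}$, which gives the stated proportion. The genuinely hard point is guaranteeing that such a cheaply certified, sufficiently large region of constraint exists for \emph{every} non-DRR with $R$ non-normal; when the composition factors (or block-stabiliser sections) of $G$ are all tiny — e.g.\ elementary abelian sections of unbounded rank, where the primitive-group input is unavailable — this requires a careful, partly ad hoc, inductive analysis of the block structure, and it is precisely this that prevents pushing the exponent $0.499$ closer to $1$.
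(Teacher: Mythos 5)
This statement is Theorem~\ref{Thm:MS}, which the paper does not prove: it is quoted verbatim from Morris and Spiga's paper \emph{Asymptotic enumeration of Cayley digraphs} and used as a black box. So there is no in-paper proof to compare against, and your proposal must be judged as a free-standing argument. Your normal case is correct and standard (it is essentially the Babai--Godsil argument): a nonidentity $\alpha\in\Aut(R)$ fixing $S$ has at most $3r/4$ orbits, and $|\Aut(R)|\le r^{\log_2 r}$, giving at most $2^{3r/4+\log_2^2 r}$ bad sets of this kind, well inside the claimed bound.

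The non-normal case, however, is where the whole theorem lives, and there you have described a proof rather than given one. The two load-bearing claims --- that for each relevant primitive type ``the number of $G$-invariant arc sets is subexponential in $r$'', and that a low-support element of $G_1$ forces $S$ to be ``constrained on a portion of $R$ of size about $r^{0.499}$'' --- are asserted, not established, and you explicitly concede that guaranteeing such a certificate for \emph{every} non-normal overgroup is the hard point. That concession is the gap. Moreover, the mechanism you propose is not the one that actually produces the exponent: in Morris--Spiga the $0.499$ arises from a dichotomy on the size of the point stabilizer ($|G_1|\le 2^{r^{0.499}}$ versus $|G_1|>2^{r^{0.499}}$), with the small-stabilizer side controlled by Lubotzky's enumeration of $d$-generated groups of order $n$ (at most $n^{2(d+1)\log_2 n}$ isomorphism classes) combined with a $3n/4$ bound on the number of suborbits, and the large-stabilizer side handled via the core $\Core_G(R)$ and the O'Nan--Scott analysis of the primitive action of $G/\Core_G(R)$ on $G/R$ --- not via a primitive/imprimitive split of the action on the vertex set with a ``local coincidence'' propagation. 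Without a precise counting lemma replacing these inputs, the claimed factor $2^{-br^{0.499}/(4\log_2^3 r)}$ cannot be extracted from your outline.
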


Since abelian groups of exponent greater than $2$ and generalized dicyclic groups are the only infinite families without GRRs, a corresponding conjecture on the proportion of GRRs among Cayley graphs was made by Babai, Godsil, Imrich and Lov\'{a}sz, as stated in~\cite[Conjecture 2.1]{Babai-Godsil1982}:

\begin{conjecture}[Babai-Godsil-Imrich-Lov\'{a}sz]\label{BGIL}
Let $R$ be a group of order $r$ that is neither abelian of exponent greater than $2$ nor generalized dicyclic. The proportion of inverse-closed subsets $S$ of $R$ such that $\Cay(R,S)$ is a GRR of $R$ approaches $1$ as $r$ approaches infinity.
\end{conjecture}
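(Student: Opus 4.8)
The plan is to prove a quantitative statement parallel to Theorem~\ref{Thm:MS}: if $R$ is neither abelian of exponent greater than $2$ nor generalized dicyclic and $r$ is large, then the number of inverse-closed $S\subseteq R\setminus\{1\}$ for which $\Cay(R,S)$ is not a GRR is $o(2^{N})$, where $2^{N}$ with $N=(r-1+i)/2$ is the number of inverse-closed subsets of $R\setminus\{1\}$ and $i$ is the number of involutions of $R$. The $S$ with $\langle S\rangle\ne R$ are all bad but easily bounded by $\sum_{H}2^{(|H|-1+i_H)/2}$ over the maximal subgroups $H$ of $R$, which is $o(2^N)$ since $|H|\le r/2$ and $R$ has only $2^{O((\log r)^2)}$ subgroups. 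Theorem~\ref{Thm:MS} cannot be quoted directly: although $\Cay(R,S)$ has the same automorphism group as a graph and as a digraph when $S=S^{-1}$, the bad set it controls has size about $2^{r-r^{0.499}/\mathrm{polylog}\,r}$, far larger than $2^N\approx 2^{r/2}$, so its proof must be re-run while tracking inverse-closedness, and the two exceptional families resurface.

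Write $A=\Aut(\Cay(R,S))$, so $S$ is bad precisely when the point stabiliser $A_1\ne 1$. Take $M$ minimal with $R<M\le A$, so $R$ is maximal in $M$; if $R\trianglelefteq M$ then $M/R\cong\Cy_p$ for a prime $p$ and a generator of $M_1$ lies in $\Aut(R)$, the point stabiliser of the holomorph $R\rtimes\Aut(R)=N_{\Sym(R)}(R)$, so $\Aut(R,S)\ne 1$. Hence every bad $S$ satisfies (a) $\Aut(R,S)\ne 1$, or (b) some minimal overgroup $M$ of $R$ has $R\not\trianglelefteq M$; and since $R\not\trianglelefteq A$ forces $A_1\ne 1$, the same dichotomy covers every non-normal Cayley graph. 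It therefore suffices to show the sets of $S$ of types (a) and (b) each have size $o(2^N)$.

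For type (a) I would use a union bound over $1\ne\alpha\in\Aut(R)$. For fixed $\alpha$, the inverse-closed $S\subseteq R\setminus\{1\}$ with $S^\alpha=S$ are exactly the unions of orbits of $\langle\alpha,\iota\rangle$ on $R\setminus\{1\}$, where $\iota\colon g\mapsto g^{-1}$ (a permutation commuting with $\alpha$ since $\alpha$ is an automorphism), so there are $2^{N_\alpha}$ of them; since every $\langle\alpha,\iota\rangle$-orbit is a union of $\iota$-orbits and a non-singleton one contains at least two, a short count gives $N-N_\alpha\ge\tfrac14\#\{g\in R:\alpha(g)\notin\{g,g^{-1}\}\}$. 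The key input is a quantitative form of the fact underlying Godsil's classification: for $R$ outside the two excluded families and every $1\ne\alpha\in\Aut(R)$, the set $C_R(\alpha)\cup\{g:\alpha(g)=g^{-1}\}$ has size at most $(1-c)r$ for an absolute $c>0$, the excluded families being precisely those admitting an $\alpha$ for which this set is all of $R$ (one reduces to $|\alpha|=2$: when $|\alpha|\ge 3$ both sets lie in the proper subgroup $C_R(\alpha^2)$). With $|\Aut(R)|\le r^{\log_2 r}=2^{(\log_2 r)^2}$, type (a) contributes at most $2^{(\log_2 r)^2-cr/4}\cdot 2^{N}=o(2^{N})$.

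Type (b) is where I expect the real difficulty. Here $M$ is transitive with the regular subgroup $R$ maximal and non-normal in it, hence either primitive or of restricted block structure; since $M\le\Aut(\Cay(R,S))$ exactly when $S$ is a union of $M_1$-orbits on $R$, the type-(b) sets lie in $\bigcup_M\{S=S^{-1}:S\text{ a union of }M_1\text{-orbits}\}$, of size at most $\sum_M 2^{k_M}$ with $k_M\le\#\{M_1\text{-orbits on }R\setminus\{1\}\}$. The obstruction is overgroups whose point stabiliser fixes too many points — pre-eminently the family in which $\iota$ itself, or a partial inversion, lies in $\Aut(\Cay(R,S))$, equivalently $S$ is a union of conjugacy classes, which is sparse among inverse-closed sets precisely because $R$ avoids the two excluded families. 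Handling all of this is where one must import and refine the Morris--Spiga analysis of transitive overgroups of a regular subgroup: the classification of primitive permutation groups containing a regular subgroup, the reduction of the imprimitive case to it, and the bounds on minimal degree and fixity of transitive groups that yield the exponent $0.499$ — now carried out counting only inverse-closed invariant connection sets. The expected outcome, mirroring Theorem~\ref{Thm:MS}, is that each relevant $M$ has $k_M\le N-r^{0.499}/\mathrm{polylog}\,r$ while there are only $2^{\mathrm{polylog}\,r}$ such $M$, so type (b) also contributes $o(2^N)$. Combining (a) and (b) drives the proportion of non-GRRs to $0$, proving Conjecture~\ref{BGIL}, and since the dichotomy also covers every non-normal Cayley graph it yields Xu's conjecture that almost all Cayley graphs of $R$ are normal.
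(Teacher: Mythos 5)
Your framework and your type~(a) analysis are essentially sound and run parallel to the paper: the dichotomy between overgroups normalising $R$ and minimal overgroups in which $R$ is non-normal is legitimate; the count of inverse-closed $\alpha$-invariant sets as $2^{N_\alpha}$ via $\langle\alpha,\iota\rangle$-orbits is correct; and the ``key input'' you invoke is precisely Spiga's recent theorem (Lemma~\ref{lem:GTS}, applied to $G=R\rtimes\langle\alpha\rangle$), which this paper also quotes rather than reproves. Combined with $|\Aut(R)|\leq 2^{\log_2^2 r}$, type~(a) goes through.

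The genuine gap is type~(b), which you yourself flag as ``where I expect the real difficulty''---and it is: it is the entire content of Sections~\ref{Sec3}--\ref{sec:4}. Two concrete problems. First, your expected outcome that ``there are only $2^{\mathrm{polylog}\,r}$ such $M$'' is false: the number of overgroups $M\leq\Sym(R)$ in which $R$ is maximal can only be bounded by roughly $2^{r^{1-\varepsilon}}$, and only under the additional hypothesis $|M_1|\leq 2^{r^{0.5-\varepsilon}}$ (Morris--Spiga via Lubotzky's enumeration, as used in Lemma~\ref{lem:Gsmall}); when $M_1$ is large no useful enumeration of the groups themselves exists, and the argument must instead be organised by the core $K=\Core_M(R)$, the O'Nan--Scott type of $M/K$ acting on $M/R$, and separate treatments of each type. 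Second, ``re-run the Morris--Spiga analysis while tracking inverse-closedness'' cannot be taken for granted: the digraph count gains a factor $1/2$ per non-singleton $M_1$-orbit, and that saving evaporates for inverse-closed sets exactly when orbits are fixed or paired by $\iota$. The paper's replacement mechanism is new and is not a refinement of the digraph proof: one shows that an invariant connection set must intersect \emph{evenly} the right cosets of a suitable subgroup $Q$ arising from a block system, and Propositions~\ref{prop:ns-nonnorm} and~\ref{prop:ns-G1} count inverse-closed sets with that even-intersection property (this is where the exponent $r^{0.499}$ actually originates, from $|R\,{:}\,Q|$ being possibly as small as $r^{0.499}/\mathrm{polylog}\,r$, not from the fixity bounds you cite); the HS/HC/AS/PA types need a further odd-quotient-graph argument. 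None of this is supplied or made redundant by your sketch, so the proposal does not yet constitute a proof of the conjecture.
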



The conjecture has been tackled by various authors with partial results and important progress, see~\cite{Babai-Godsil1982,Morris-MP2022,Spiga-eq2021} for instance. In this paper, we completely solve Conjecture~\ref{BGIL} in the affirmative:

\begin{theorem}\label{Thm:main}
Let $R$ be a group of order $r$ such that $R$ is neither abelian of exponent greater than $2$ nor generalized dicyclic, and let $P(R)$ be the proportion of inverse-closed subsets $S$ of $R$ such that $\Cay(R,S)$ is a GRR of $R$. There exists an absolute constant $c_0$ such that whenever $r\geq c_0$, the proportion $P(R)$ is at least $1-2^{-\frac{r^{0.499}}{8\log_2^3{r}}+\log_2^2{r}+3}$. In particular, $P(R)$ approaches $1$ as $r$ approaches infinity.
\end{theorem}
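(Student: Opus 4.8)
The plan is to separate the ways in which an inverse-closed $S$ can fail to give a GRR and to bound each contribution. For inverse-closed $S\subseteq R$, the graph $\Cay(R,S)$ is a GRR if and only if the vertex stabiliser $\Aut(\Cay(R,S))_1$ is trivial, and this fails precisely when one of the following holds: (I) $\Cay(R,S)$ is \emph{non-normal}, i.e.\ $R$ is not normal in $G:=\Aut(\Cay(R,S))$; or (II) $\Cay(R,S)$ is normal but the setwise stabiliser $\Aut(R,S):=\{\alpha\in\Aut(R):S^\alpha=S\}$ is nontrivial, since for a normal Cayley graph $G=R\rtimes\Aut(R,S)$. Writing $\iota\colon g\mapsto g^{-1}$ and letting $k$ be the number of $\langle\iota\rangle$-orbits on $R$ (so there are exactly $2^{k}$ inverse-closed subsets), and taking probabilities with respect to the uniform distribution on inverse-closed subsets, we obtain $1-P(R)\le\Pr[\mathrm{(I)}]+\Pr[\mathrm{(II)}]$; it then suffices to bound each of the two terms by roughly $2^{-r^{0.499}/(8\log_2^3 r)+\log_2^2 r+2}$.

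For event (II) I would use a union bound over $\Aut(R)\setminus\{1\}$. Since $R$ is generated by at most $\lceil\log_2 r\rceil$ elements, $|\Aut(R)|\le r^{\lceil\log_2 r\rceil}\le 2^{\log_2^2 r+O(\log r)}$, which is where the additive $\log_2^2 r$ in the exponent comes from. For fixed $\alpha\ne 1$, the inverse-closed $S$ with $S^\alpha=S$ are exactly the unions of orbits of $\langle\alpha,\iota\rangle$ on $R$, so there are $2^{o(\alpha)}$ of them, where $o(\alpha)$ is the number of such orbits. The key group-theoretic input is that, once $r$ is large, every nontrivial $\alpha$ satisfies $o(\alpha)\le k-r^{0.499}/(8\log_2^3 r)$; in fact $o(\alpha)\le k-\Omega(r)$. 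Here is exactly where the hypothesis on $R$ is used: for $\langle\alpha,\iota\rangle$ and $\langle\iota\rangle$ to have almost the same orbits, either $\alpha$ fixes almost all of $R$, which is impossible because $\{g:g^\alpha=g\}$ is a proper subgroup, or the set $\{g:g^\alpha\in\{g,g^{-1}\}\}$ is almost all of $R$; using that a nontrivial automorphism inverting an index-$2$ subgroup forces that subgroup to be abelian, the latter is ruled out precisely because $R$ is neither abelian of exponent greater than $2$ nor generalised dicyclic (the finitely many small exceptions being absorbed into $r\ge c_0$). Combining, $\Pr[\mathrm{(II)}]\le|\Aut(R)|\cdot 2^{-r^{0.499}/(8\log_2^3 r)}\le 2^{-r^{0.499}/(8\log_2^3 r)+\log_2^2 r+O(\log r)}$.

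The main obstacle is event (I): that a random inverse-closed Cayley graph of $R$ is normal, which is a quantified form of Xu's conjecture. My approach would be to run the machinery behind Theorem~\ref{Thm:MS} in the inverse-closed setting. Given non-normality, one passes to a subgroup $R<M\le G$ that is transitive, minimal, and in which $R$ is not normal, and analyses the point stabiliser $M_1$ using structural results on transitive permutation groups; the outcome of that analysis is that $S$ must be compatible with one of a controlled (sub-exponential) list of nontrivial combinatorial configurations attached to $M$ — invariance under a fixed nontrivial permutation of $R$, or constancy on the parts of a nontrivial $M$-invariant partition — and each such configuration confines $S$ to a "coset" of a subgroup of $2^{R}$ of codimension at least $g\ge r^{0.499}/(8\log_2^3 r)$, exactly as in the digraph case. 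The delicate new point, and the reason the constant degrades from $4$ to $8$ relative to Theorem~\ref{Thm:MS}, is that these codimension-$g$ constraints must be tracked through $\iota$: a set of $g$ "frozen coordinates" meets at least $g/2$ distinct $\langle\iota\rangle$-orbits, so only half of the saving $2^{-g}$ is guaranteed to survive the restriction to inverse-closed subsets, giving a bound of $2^{k-g/2}$ rather than $2^{k-g}$ per configuration. Summing over the configurations, together with the bound for (II), yields $1-P(R)\le 2^{-r^{0.499}/(8\log_2^3 r)+\log_2^2 r+3}$, with the $+3$ absorbing the constant-factor losses in the two union bounds; the statement about $P(R)\to 1$ is then immediate since $r^{0.499}/(8\log_2^3 r)$ eventually dominates $\log_2^2 r$.
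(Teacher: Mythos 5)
Your outer shell is reasonable: a union bound over a controlled family of bad configurations, with the additive $\log_2^2 r$ coming from counting automorphisms/subgroups and the main saving coming from a codimension bound of order $r^{0.499}/\log_2^3 r$. Your case (II) is essentially an instance of Spiga's suborbit bound (Lemma~\ref{lem:GTS}), applied to $G=R\rtimes\langle\alpha\rangle$. The genuine gap is in case (I), and it sits exactly where the difficulty of the whole problem lies: the sentence ``run the machinery behind Theorem~\ref{Thm:MS} in the inverse-closed setting'' names the theorem to be proved rather than proving it, and the heuristic you offer for why the digraph argument transfers with only a factor-of-two loss is not valid for the constraints that actually arise.

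Concretely, the configurations produced by the Morris--Spiga analysis are not ``$g$ frozen coordinates''. They are of three kinds: (a) $S$ is a union of orbits of a stabilizer $G_1$; (b) $S$ meets each part of an equitable partition (e.g.\ the right $Q$-cosets inside a double coset $QxQ$ for a non-normal $Q\leq R$) in the same number of elements; (c) parity constraints coming from a prescribed odd quotient graph. Your ``meets at least $g/2$ of the $\langle\iota\rangle$-orbits'' argument works for (a) (that is Spiga's lemma) and for genuinely frozen coordinates, but it fails for (b): if $QxQ$ is inverse-closed, then knowing $S\cap\Lambda$ for a right coset $\Lambda\subseteq QxQ$ already determines $S\cap\Lambda^{-1}$, and when $\Lambda^{-1}$ is another right coset of the same double coset the constraint $|S\cap\Lambda^{-1}|=|S\cap\Lambda|$ is satisfied automatically and yields \emph{no} saving. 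One has to show that the right cosets in $QxQ$ can be ordered so that at each of roughly half the steps the portion not yet forced by inversion is nonempty; this is the content of Lemmas~\ref{Claimfor2k} and~\ref{Claimforarrangement} and Proposition~\ref{prop:ns-nonnorm}, and it is a real combinatorial argument, not a counting triviality. Moreover, reaching these configurations in the undirected setting requires an O'Nan--Scott case division on the primitive quotient $G/\Core_G(R)$ acting on $G/R$ (types HA/SD/TW handled via double cosets of a non-normal subgroup of order less than $r^{0.501}\log_2^3 r$; HS/HC/AS/PA via odd quotient graphs; CD via orbits of the stabilizer modulo the core), together with a preliminary reduction conditioning on $|G_1|$, on $|\Core_G(R)|$, and on whether the core-orbits are $G_1$-stable. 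None of this is visible in your outline, and without it the claimed bound $2^{k-g/2}$ per configuration is unsupported. (Incidentally, the $8$ in the final exponent does not arise as a doubling of the $4$ in Theorem~\ref{Thm:MS}; it comes from the $\frac{1}{8}|R\,{:}\,Q|$ of Proposition~\ref{prop:ns-nonnorm}, which in turn combines a $\frac12$ per coset with the bound $\ell\leq\frac34|R\,{:}\,Q|$ on the number of double cosets.)
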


For a group $R$ that is neither abelian of exponent greater than $2$ nor generalized dicyclic, let $\CG(R)$ denote the set of Cayley graphs on $R$ up to isomorphism, and let $\GRR(R)$ denote the set of GRRs on $R$ up to isomorphism. Elements of $\CG(R)$ are called \emph{unlabeled} Cayley graphs on $R$. In contrast, to emphasize that a Cayley graph $\Cay(R,S)$ on $R$ is determined by the set $S$, we sometimes call it a \emph{labeled} Cayley graph.
The following result, which can be derived easily from Theorem~\ref{Thm:main}, shows that almost all unlabeled Cayley graphs on such groups $R$ are GRRs.

\begin{corollary}\label{unlab}
Let $R$ be a group of order $r$ such that $R$ is neither abelian of exponent greater than $2$ nor generalized dicyclic. There exists an absolute constant $c_0$ such that whenever $r\geq c_{0}$, the ratio $|\GRR(R)|/|\CG(R)|$ is at least $1-2^{-\frac{r^{0.499}}{8\log_2^3{r}}+2\log_2^2{r}+3}$. In particular, $|\GRR(R)|/|\CG(R)|$ approaches $1$ as $r$ approaches infinity.
\end{corollary}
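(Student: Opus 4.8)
The plan is to deduce Corollary~\ref{unlab} from Theorem~\ref{Thm:main} by comparing the labeled count (inverse-closed subsets) with the unlabeled count (isomorphism classes) on both the GRR side and the non-GRR side, and then to absorb the discrepancy into the crude estimate $|\Aut(R)|\le 2^{\log_2^2 r}$.

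First I would set up the natural action: for $\sigma\in\Aut(R)$ the map $\sigma\colon R\to R$ is a graph isomorphism $\Cay(R,S)\to\Cay(R,S^\sigma)$, so $\Aut(R)$ acts on the set of inverse-closed subsets of $R$, and each fibre of the map $S\mapsto[\Cay(R,S)]$ is a union of $\Aut(R)$-orbits. The key structural point is that \emph{if $\Cay(R,S)$ is a GRR, then its fibre is a single $\Aut(R)$-orbit on which $\Aut(R)$ acts freely.} For freeness: a $\sigma\in\Aut(R)$ with $S^\sigma=S$ is a graph automorphism of $\Cay(R,S)$ fixing the vertex $1$, hence lies in $\Aut(\Cay(R,S))=R$ acting regularly, forcing $\sigma=1$. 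For the single orbit: an isomorphism $\phi\colon\Cay(R,S)\to\Cay(R,S')$ between GRRs conjugates the right regular representation $R=\Aut(\Cay(R,S))$ onto $R=\Aut(\Cay(R,S'))$, hence lies in $N_{\Sym(R)}(R)=R\rtimes\Aut(R)$; composing with a right translation we may assume $\phi$ fixes $1$, whence $\phi\in\Aut(R)$ and $S'=S^\phi$.

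Next I would count. Let $q$ be the number of inverse-closed subsets of $R$, so $P(R)q$ of them give GRRs and $(1-P(R))q$ do not. By the previous paragraph the GRR connection sets fall into $\Aut(R)$-orbits of size exactly $|\Aut(R)|$, one per isomorphism class, so $|\GRR(R)|=P(R)q/|\Aut(R)|$; and the $(1-P(R))q$ non-GRR connection sets surject onto $\CG(R)\setminus\GRR(R)$, so $|\CG(R)\setminus\GRR(R)|\le(1-P(R))q$. Since $|\CG(R)|=|\GRR(R)|+|\CG(R)\setminus\GRR(R)|$, cancelling $q$ gives
\[
\frac{|\GRR(R)|}{|\CG(R)|}\ \ge\ \frac{P(R)}{P(R)+(1-P(R))|\Aut(R)|}\ \ge\ 1-(1-P(R))\,|\Aut(R)|,
\]
where the last inequality uses $P(R)+(1-P(R))|\Aut(R)|\ge P(R)+(1-P(R))=1$. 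It then remains to bound $|\Aut(R)|\le r^{\log_2 r}=2^{\log_2^2 r}$ — an automorphism is determined by the images of a generating set of size at most $\log_2 r$ — and to substitute the estimate $1-P(R)\le 2^{-r^{0.499}/(8\log_2^3 r)+\log_2^2 r+3}$ from Theorem~\ref{Thm:main}; this yields $|\GRR(R)|/|\CG(R)|\ge 1-2^{-r^{0.499}/(8\log_2^3 r)+2\log_2^2 r+3}$ for $r\ge c_0$, where $c_0$ is taken to be at least the constant of Theorem~\ref{Thm:main}.

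I do not expect a genuine obstacle here, since the substantive content lies entirely in Theorem~\ref{Thm:main}. The only points needing care are the fibre computation for GRRs — which rests on $N_{\Sym(R)}(R)$ being the holomorph of $R$ together with the fact that a nonidentity automorphism of a GRR fixes no vertex, so that each GRR isomorphism class pulls back to a free $\Aut(R)$-orbit of inverse-closed subsets — and the bookkeeping that the two $\log_2^2 r$ contributions, one from Theorem~\ref{Thm:main} and one from the bound on $|\Aut(R)|$, combine to exactly the exponent in the statement.
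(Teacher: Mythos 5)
Your proposal is correct and follows essentially the same route as the paper: compare labeled and unlabeled counts via $|\GRR(R)|\geq|\mathcal{S}(R)|P(R)/|\Aut(R)|$ and $|\CG(R)|-|\GRR(R)|\leq|\mathcal{S}(R)|(1-P(R))$, bound $|\Aut(R)|\leq 2^{\log_2^2 r}$ by a generating set of size at most $\log_2 r$, and feed in Theorem~\ref{Thm:main}. Your fibre analysis (free single $\Aut(R)$-orbit over each GRR class) and the inequality $\tfrac{P}{P+(1-P)|\Aut(R)|}\geq 1-(1-P)|\Aut(R)|$ are slightly sharper and cleaner, respectively, than what the paper writes, but they are not a different argument.
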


A Cayley digraph $\Cay(R,S)$ is said to be \emph{normal} if $R$ is normal in $\Aut(\Cay(R,S))$. Studying the normality of Cayley digraphs is a helpful approach to understand the symmetry of Cayley digraphs. In \cite{Xu1998}, Xu conjectured that almost all Cayley (di)graphs are normal. The digraph part of this conjecture has been confirmed by Morris and Spiga in \cite{Morris-S2021} as an immediate corollary of Theorem~\ref{Thm:MS}, while the graph part of this conjecture, stated below, has been open.

\begin{conjecture}[Xu]\label{Xu}
Let $R$ be a group of order $r$ such that $R\ncong \Quat_8\times \Cy_2^m$ for any nonnegative integer $m$. The proportion of inverse-closed subsets $S$ of $R$ such that $\Cay(R,S)$ is a normal Cayley graph of $R$ approaches $1$ as $r$ approaches infinity.
\end{conjecture}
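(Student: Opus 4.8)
The plan is to deduce Conjecture~\ref{Xu} from Theorem~\ref{Thm:main} in the generic case and to treat Godsil's two exceptional families by hand. Throughout we use the standard criterion that $\Cay(R,S)$ is a normal Cayley graph if and only if the vertex stabiliser $\Aut(\Cay(R,S))_1$ is contained in $\Aut(R)$, in which case it in fact equals $\Aut(R,S):=\{\varphi\in\Aut(R):\varphi(S)=S\}$ and $\Aut(\Cay(R,S))=R\rtimes\Aut(R,S)$. So for every group $R$ with $R\ncong\Quat_8\times\Cy_2^m$ the goal is to show that $\Aut(\Cay(R,S))_1\leq\Aut(R)$ for all but a vanishing proportion of inverse-closed subsets $S$. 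When $R$ is neither abelian of exponent greater than $2$ nor generalized dicyclic this is immediate from Theorem~\ref{Thm:main}: if $\Cay(R,S)$ is a GRR then $\Aut(\Cay(R,S))_1=1\leq\Aut(R)$, so the proportion of inverse-closed $S$ for which $\Cay(R,S)$ is normal is at least $P(R)$, which tends to $1$.

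It remains to handle the cases where $R$ is abelian of exponent greater than $2$, or $R$ is generalized dicyclic with $R\ncong\Quat_8\times\Cy_2^m$; here $R$ has no GRR, so Theorem~\ref{Thm:main} does not apply and one argues directly. In both situations $\Aut(\Cay(R,S))_1$ is always strictly larger than the trivial group because $\Aut(R,S)$ already is: for abelian $R$ the inversion $\iota\colon g\mapsto g^{-1}$ lies in $\Aut(R,S)$ for every inverse-closed $S$, and for $R=A\langle x\rangle$ generalized dicyclic with $z=x^2$ the same holds for the automorphism $\delta$ of $R$ that fixes $A$ pointwise and sends $g\mapsto gz$ on $R\setminus A$, since $w^2=z$ and hence $w^{-1}=wz$ for every $w\in R\setminus A$, so that $S=S^{-1}$ forces $\delta(S)=S$. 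Thus the statement to prove becomes that $\Aut(\Cay(R,S))_1$ contains \emph{nothing beyond} $\Aut(R,S)$ for almost all $S$; equivalently, that only a negligible number of inverse-closed $S$ admit a vertex-stabilising permutation outside $\Aut(R)$. The strategy is to rerun the counting and probabilistic arguments underlying Theorem~\ref{Thm:main}, now with the aim of ruling out vertex-stabilising permutations that are not group automorphisms (rather than ruling out all nontrivial ones), which should yield a bound of the same shape; for abelian $R$ this conclusion can alternatively be extracted from existing descriptions of the automorphism groups of Cayley graphs on abelian groups.

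\textbf{Main obstacle.} The crux is the generalized dicyclic case. Carrying the machinery of Theorem~\ref{Thm:main} through here is considerably more delicate, since the ``atomic'' symmetries one must exclude are those lying outside $\Aut(R)$ rather than outside $R$, and the combinatorial estimates have to be re-examined in that light; moreover the resulting bound must be established for \emph{every} generalized dicyclic group with the single exception of $\Quat_8\times\Cy_2^m$. That exclusion is unavoidable: for $R\cong\Quat_8\times\Cy_2^m$ a positive proportion of Cayley graphs admit a vertex-stabilising symmetry that is not an automorphism of $R$ (forced by the quaternion factor), so the argument must be sharp enough to detect exactly this dichotomy, which is the same phenomenon that gives these groups their special status in Godsil's classification. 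Assembling the three cases then yields Conjecture~\ref{Xu} for all $R$ with $R\ncong\Quat_8\times\Cy_2^m$.
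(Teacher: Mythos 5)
Your three-case decomposition (generic groups via Theorem~\ref{Thm:main}; abelian of exponent greater than $2$; generalized dicyclic) is exactly the paper's, and your treatment of the generic case is correct: a GRR is trivially normal, so Theorem~\ref{Thm:main} immediately gives the result there. Your identification of the obstructing automorphisms in the exceptional families ($\iota$ for abelian $R$, and the map $\delta\colon g\mapsto gz$ on $R\setminus A$ for generalized dicyclic $R$) is also accurate.

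However, there is a genuine gap: for the two exceptional families you do not actually prove anything — you propose to ``rerun the counting and probabilistic arguments underlying Theorem~\ref{Thm:main}'' and then explicitly flag the generalized dicyclic case as an unresolved obstacle. As written, the argument is therefore incomplete precisely where you say the difficulty lies. The paper closes both cases not by redoing the machinery but by citing quantitative results that already exist in the literature: for abelian groups, Dobson--Spiga--Verret~\cite[Theorem~1.7]{Dobson2016} shows that all but at most $2^{\frac{|\mathcal{I}(R)|}{2}+\frac{11r}{24}+2\log_2^2 r+2}$ inverse-closed subsets $S$ satisfy $|\Aut(\Cay(R,S))|=2|R|$, forcing $\Aut(\Cay(R,S))=R\rtimes\langle\iota\rangle$ and hence normality; for generalized dicyclic groups other than $\Quat_8\times\Cy_2^m$, Morris--Spiga--Verret~\cite[Theorem~1.5]{Morris-SV2015} gives the analogous bound with exponent $\frac{|\mathcal{I}(R)|}{2}+\frac{23r}{48}+2\log_2^2 r+5$, forcing $\Aut(\Cay(R,S))=R\rtimes\langle\delta\rangle$. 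Both bounds are $o\bigl(2^{\mathbf{c}(R)}\bigr)$, and comparing the three resulting lower bounds shows the proportion of normal Cayley graphs is at least $1-2^{-\frac{r^{0.499}}{8\log_2^3 r}+\log_2^2 r+3}$ in every case. So the ``main obstacle'' you describe is real if one insists on a self-contained proof, but it was already resolved in prior work; your proposal needs either those citations or a genuine new argument to be a proof, and it currently supplies neither.
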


We remark that the excluded groups $\Quat_8\times \Cy_2^m$ are the Hamiltonian $2$-groups, which are proved in~\cite[Lemma~5]{Wang1998} to be the groups other than $\Cy_4\times \Cy_2$ that admit no normal Cayley graphs.
It is also worth noting that almost all Cayley graphs on finite groups $R$ without GRRs have full automorphism groups `as small as possible', see Dobson-Spiga-Verret~\cite[Theorem 1.5]{Dobson2016} for abelian groups and Morris-Spiga-Verret~\cite[Theorem 1.4]{Morris-SV2015} for generalized dicyclic groups.
Combining these results with our Theorem~\ref{Thm:main}, we confirm Conjecture~\ref{Xu} by the quantified version in the next corollary.
In fact, the equivalence between Conjectures~\ref{BGIL} and~\ref{Xu} was already known to Spiga~\cite{Spiga-eq2021}.

\begin{corollary}\label{normality}
Let $R$ be a group of order $r$ such that $R\ncong \Quat_8\times \Cy_2^m$ for any nonnegative integer $m$. For large enough $r$, the proportion of inverse-closed subsets $S$ of $R$ such that $\Cay(R,S)$ is a normal Cayley graph of $R$ is at least $1-2^{-\frac{r^{0.499}}{8\log_2^3{r}}+\log_2^2{r}+3}$.
\end{corollary}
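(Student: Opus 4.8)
The plan is to handle separately the three families of groups the statement covers: those for which Theorem~\ref{Thm:main} already applies, the abelian groups of exponent greater than $2$, and the generalized dicyclic groups not isomorphic to $\Quat_8\times\Cy_2^m$. The common thread is that in each family one can pin down the smallest possible full automorphism group of a Cayley graph on $R$ and observe that $R$ is normal in it; the statement then follows by feeding the relevant enumeration theorem into this observation. I would begin by recalling the classical criterion that $\Cay(R,S)$ is normal if and only if $\Aut(\Cay(R,S))=R\rtimes\Aut(R,S)$, where $\Aut(R,S)$ denotes the setwise stabiliser of $S$ in $\Aut(R)$: the inclusion $R\rtimes\Aut(R,S)\leq\Aut(\Cay(R,S))$ always holds, and conversely, if $R$ is normal then the stabiliser of the identity vertex normalizes $R$, hence lies in $\Aut(R)$, and fixes the neighbourhood $S$ of the identity, so it is contained in $\Aut(R,S)$.

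If $R$ is neither abelian of exponent greater than $2$ nor generalized dicyclic, then Theorem~\ref{Thm:main} gives, for $r\geq c_0$, a proportion of at least $1-2^{-\frac{r^{0.499}}{8\log_2^3 r}+\log_2^2 r+3}$ of inverse-closed subsets $S$ with $\Cay(R,S)$ a GRR of $R$, that is, with $\Aut(\Cay(R,S))=R$. Every such graph is trivially normal, so the same lower bound holds for the proportion of inverse-closed $S$ giving a normal Cayley graph, which is exactly the claimed estimate in this case.

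If $R$ is abelian of exponent greater than $2$, the inversion map $\iota\colon x\mapsto x^{-1}$ is an order-$2$ element of $\Aut(R)$ fixing every inverse-closed subset, so $R\rtimes\langle\iota\rangle\leq\Aut(\Cay(R,S))$ with $R$ normal of index $2$; hence whenever $\Aut(\Cay(R,S))$ equals its smallest possible value $R\rtimes\langle\iota\rangle$, the graph $\Cay(R,S)$ is normal, and by Dobson--Spiga--Verret~\cite[Theorem~1.5]{Dobson2016} this minimum is attained for all but a small proportion of $S$ once $r$ is large. If instead $R=\langle A,x\rangle$ is generalized dicyclic with $A$ abelian of index $2$, $x^4=1$, and $x^{-1}ax=a^{-1}$ for $a\in A$, then the permutation of $R$ acting as the identity on $A$ and as $s\mapsto sx^{2}=s^{-1}$ on $R\setminus A$ turns out to be an order-$2$ element of $\Aut(R)$ fixing every inverse-closed subset; so again $R$ is normal of index $2$ in the smallest possible automorphism overgroup, and by Morris--Spiga--Verret~\cite[Theorem~1.4]{Morris-SV2015} this minimum is attained for all but a small proportion of $S$. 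In the last two cases the small number of excluded groups, namely $\Cy_4\times\Cy_2$ and $\Quat_8\times\Cy_2^m$ respectively, are precisely those for which the minimal automorphism group does not contain $R$ as a normal subgroup, and both are ruled out for $r$ large.

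Finally I would take $c_0$ to be the largest of the three thresholds coming from Theorem~\ref{Thm:main},~\cite{Dobson2016}, and~\cite{Morris-SV2015}, enlarging it if necessary so that the exceptional proportions in the latter two are at most $2^{-\frac{r^{0.499}}{8\log_2^3 r}+\log_2^2 r+3}$, and conclude that for $r\geq c_0$ the proportion of inverse-closed $S$ with $\Cay(R,S)$ normal is at least $1-2^{-\frac{r^{0.499}}{8\log_2^3 r}+\log_2^2 r+3}$ in every case. The step I expect to require the most care is exactly this last bookkeeping: the quantified conclusions of~\cite{Dobson2016} and~\cite{Morris-SV2015} are stated with their own parameters and constants, so some work (or an appeal to the sharper estimates in~\cite{Morris-MP2022,Spiga-eq2021}) may be needed to see that their exceptional proportions are dominated by the exponent appearing here. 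By comparison, exhibiting the canonical automorphisms $\iota$ and $s\mapsto s^{-1}$ that obstruct normality only in the exceptional families is a routine group-theoretic computation.
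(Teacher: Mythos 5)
Your proposal is correct and follows essentially the same route as the paper: split into the three families, apply Theorem~\ref{Thm:main} for the groups it covers, and invoke the Dobson--Spiga--Verret and Morris--Spiga--Verret enumeration results (whose minimal automorphism groups contain $R$ as an index-$2$ normal subgroup) for the abelian and generalized dicyclic cases, then compare the three bounds. The only difference is that the paper carries out the final bookkeeping explicitly, recording the exceptional proportions $2^{-\frac{r}{24}+2\log_2^2 r+2}$ and $2^{-\frac{r}{48}+2\log_2^2 r+5}$ and noting they are dominated by the bound from Theorem~\ref{Thm:main}, which is exactly the step you flagged as remaining.
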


The rest of this paper is devoted to the proof of Theorem~\ref{Thm:main} and Corollaries~\ref{unlab} and~\ref{normality}.
The strategy we apply to prove Theorem~\ref{Thm:main} is initiated in~\cite{Babai-Godsil1982}, followed by~\cite{Dobson2016,Morris-SV2015,Spiga-FDR2018,Spiga-GFR2020}, and further developed in~\cite{Morris-S2021}.
We also make use of the substantial progress on enumeration of GRRs achieved very recently by Spiga~\cite{Spiga-suborbits2021+} and Morris-Moscatiello-Spiga~\cite{Morris-MP2022}.
Our main contribution to the solution of Conjecture~\ref{BGIL} lies in the results in Section~\ref{Sec3}, which eventually leads us to Theorem~\ref{Thm:main} from several important pieces of work by various authors.

\section{Preliminaries}\label{Sec2}

For a subset $X$ of a group, denote by $\I(X)$ the set of elements of $X$ of order at most $2$ and let
\begin{equation}\label{eq:cr}
\mathbf{c}(X)=\frac{|X|+|\mathcal{I}(X)|}{2}.
\end{equation}
It is clear (see~\cite[Lemma 2.2]{Spiga-eq2021} for instance) that, if $X$ is inverse-closed, then the number of inverse-closed subsets of $X$ is $2^{\mathbf{c}(X)}$.

\begin{lemma}\emph{(\cite[Theorem 4.1]{Edmonds2009})}\label{lem:inv-ea2}
A finite group $R$ such that $|\I(R)|/|R|>3/4$ is an elementary abelian $2$-group.
\end{lemma}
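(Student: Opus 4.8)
The plan is to prove that $I:=\I(R)$ is in fact all of $R$, by first showing that every element of $I$ is central and then that $I$ is an elementary abelian $2$-subgroup of index $1$. Throughout write $r=|R|$, and recall $\I(R)=\{x\in R: x^2=1\}$, which contains the identity.

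First I would fix $x\in I$ and establish the identity $xI\cap I=C_R(x)\cap I$. For $z\in I$, membership $z\in xI$ is equivalent to $xz\in I$, i.e.\ to $xz=(xz)^{-1}=z^{-1}x^{-1}$; since $x$ and $z$ are self-inverse this says precisely $xz=zx$, so $z\in C_R(x)$. Conversely, if $z\in I$ commutes with $x$ then $(xz)^2=x^2z^2=1$, so $z\in xI$. Now, since $|xI|=|I|$ and $xI\cup I\subseteq R$, inclusion–exclusion gives
\[
2|I|-|C_R(x)\cap I|=|xI|+|I|-|xI\cap I|=|xI\cup I|\le r,
\]
hence $|C_R(x)|\ge |C_R(x)\cap I|\ge 2|I|-r>2\cdot\tfrac34 r-r=\tfrac r2$, using the hypothesis $|I|>\tfrac34 r$. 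A subgroup of index less than $2$ is the whole group, so $C_R(x)=R$ and $x\in Z(R)$; as $x\in I$ was arbitrary, $I\subseteq Z(R)$.

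With $I$ central, for any $x,y\in I$ we get $(xy)^2=x^2y^2=1$, so $xy\in I$; together with $1\in I$ and $x^{-1}=x\in I$ this shows $I$ is a subgroup, which is abelian and of exponent at most $2$, i.e.\ an elementary abelian $2$-group. Finally, by Lagrange $[R:I]=r/|I|$ is a positive integer, and $r/|I|<4/3$, so $[R:I]=1$ and $R=I$ is an elementary abelian $2$-group, as claimed.

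The argument is short and I do not anticipate a genuine obstacle; the only points requiring care are using both $x$ and $z$ being self-inverse in the coset-intersection identity, and observing that the constant $3/4$ is exactly what forces $|C_R(x)|>r/2$ (a weaker density would only give a nontrivial lower bound on $|C_R(x)\cap I|$, not $C_R(x)=R$). The constant is moreover best possible: the dihedral group of order $8$ has $6$ of its $8$ elements of order at most $2$ yet is not elementary abelian.
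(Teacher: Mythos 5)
Your proof is correct. Note that the paper does not prove this lemma at all -- it is quoted from Edmonds--Norwood \cite[Theorem 4.1]{Edmonds2009} -- so there is no in-paper argument to compare against; your self-contained argument (showing via the coset-intersection identity $xI\cap I=C_R(x)\cap I$ and inclusion--exclusion that every involution is central, hence that $\I(R)$ is a subgroup of index less than $4/3$) is the standard elementary counting proof of this fact, and your remark that $D_8$ shows the constant $3/4$ is sharp is also accurate.
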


From the identity
\begin{equation}\label{eq:parity}
\sum_{k=0}^m(-1)^k\binom{m}{k}=0
\end{equation}
we see that, for a given nonempty set, the number of subsets of odd size equals the number of subsets of even order. In particular,
\begin{equation}\label{eq:BoundBinom}
\binom{m}{k}\leq2^{m-1}\ \text{ for all }\, k\in\{0,1,\dots,m\}.
\end{equation}

\begin{lemma}\label{lem:nics-givensize}
Let $R$ be a group, and let $X$ be a nonempty inverse-closed subset of $R$. The number of inverse-closed subsets of $X$ with a given size is at most $2^{\mathbf{c}(X)-1}$.
\end{lemma}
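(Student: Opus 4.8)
The plan is to reduce the count of inverse-closed subsets of $X$ to a count of ordinary subsets of a combinatorial gadget built from the $R$-orbits on $X$ under inversion, and then apply the binomial bound \eqref{eq:BoundBinom}. Write $\I(X)$ for the involutions-and-identity part of $X$, with $i=|\I(X)|$, and let the remaining elements of $X$ split into $p$ inverse-pairs $\{x,x^{-1}\}$ with $x\neq x^{-1}$, so that $|X|=i+2p$ and $\mathbf{c}(X)=i+p$ by \eqref{eq:cr}. An inverse-closed subset $T\subseteq X$ is determined freely by choosing which elements of $\I(X)$ to include (a subset of an $i$-set) together with which of the $p$ inverse-pairs to include in full (a subset of a $p$-set); thus inverse-closed subsets of $X$ correspond bijectively to subsets of the disjoint union $\I(X)\sqcup P$, where $P$ is the $p$-element set of inverse-pairs, a set of total size $\mathbf{c}(X)$.

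Next I would track how the \emph{size} $|T|$ translates under this bijection. If $T$ meets $\I(X)$ in $a$ elements and contains $b$ of the pairs, then $|T|=a+2b$, while the corresponding subset of $\I(X)\sqcup P$ has size $a+b$. Fixing the target size $|T|=s$, the number of inverse-closed subsets of $X$ of size $s$ is therefore
\begin{equation*}
\sum_{a+2b=s}\binom{i}{a}\binom{p}{b}.
\end{equation*}
The ranges of $a$ (mod $2$) for distinct values of $b$ are disjoint, so this sum is bounded by $\sum_{b}\binom{p}{b}\binom{i}{s-2b}$ with no overcounting; one then wants to bound it by $2^{i+p-1}=2^{\mathbf{c}(X)-1}$.

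The cleanest way to close the argument, and the step I expect to need the most care, is to avoid estimating the double sum termwise and instead observe that the inverse-closed subsets of $X$ of a \emph{fixed} size $s$ inject into the subsets of the $\mathbf{c}(X)$-element set $\I(X)\sqcup P$ whose size lies in a single residue class determined by $s$ and the parity bookkeeping above — or, even more simply, into the subsets of $\I(X)\sqcup P$ of some fixed size $k$ after a size-preserving relabelling. In fact each inverse-closed $T$ of size $s$ maps to a subset of $\I(X)\sqcup P$ of size $a+b$ where $a+2b=s$, and these image-sizes need not all coincide; so the honest route is: the map $T\mapsto (T\cap\I(X),\,\{\text{pairs in }T\})$ is injective, its image lies in the power set of a $\mathbf{c}(X)$-set, and grouping the images by their size $k=a+b$ gives at most $\binom{\mathbf{c}(X)}{k}$ for each $k$; but since $b$ is then forced to be $s-2(\,?\,)$... — to sidestep this, note that for fixed $s$ the pair $(a,b)$ ranges over a set on which $a+b$ takes each value at most once (because $a=s-2b$ makes $a+b=s-b$ strictly decreasing in $b$), so the total is $\sum_b\binom{i}{s-2b}\binom{p}{b}\le \sum_b\binom{\mathbf{c}(X)}{s-b}=\sum_{k}\binom{\mathbf{c}(X)}{k}$ over a set of $k$'s of a single parity, hence at most half of $2^{\mathbf{c}(X)}$ by \eqref{eq:parity}, i.e. at most $2^{\mathbf{c}(X)-1}$. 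The one genuine subtlety is handling the degenerate case $X=\I(X)$ (so $p=0$): then the bound is just \eqref{eq:BoundBinom} applied to the $i$-set, giving $\binom{i}{s}\le 2^{i-1}=2^{\mathbf{c}(X)-1}$, which is exactly the edge case that forces the $-1$ in the exponent and shows the bound is sharp.
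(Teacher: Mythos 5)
Your setup is sound: splitting $X$ into $i=|\I(X)|$ self-inverse elements and $p$ inverse-pairs gives $\mathbf{c}(X)=i+p$, the correspondence $T\mapsto\bigl(T\cap\I(X),\{\text{pairs contained in }T\}\bigr)$ is a genuine bijection onto the subsets of an $(i+p)$-set, and the number of inverse-closed $s$-subsets is exactly $\sum_{a+2b=s}\binom{i}{a}\binom{p}{b}$. The gap is in your final step. As $b$ runs over its admissible range the values $k=s-b$ run over \emph{consecutive} integers, not over a single parity class --- it is $a=s-2b$ that has fixed parity, not $a+b=s-b$ --- so \eqref{eq:parity} does not apply to $\sum_b\binom{\mathbf{c}(X)}{s-b}$. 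Worse, that intermediate quantity can genuinely exceed the target: for $i=2$, $p=1$, $s=2$ one has $\sum_b\binom{3}{2-b}=\binom{3}{2}+\binom{3}{1}=6>4=2^{\mathbf{c}(X)-1}$, so the chain of inequalities cannot be completed. (The Vandermonde step $\binom{i}{s-2b}\binom{p}{b}\le\binom{i+p}{s-b}$ is correct but already too lossy.)

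The repair is to run the parity argument on $|T|$ itself rather than on $a+b$. If $i\ge1$, fix a self-inverse $x\in\I(X)$; then $T\mapsto T\mathbin{\triangle}\{x\}$ is an involution on the inverse-closed subsets of $X$ that flips the parity of $|T|$, so exactly half of the $2^{\mathbf{c}(X)}$ inverse-closed subsets have $|T|\equiv s\pmod 2$, and the size-$s$ family lies inside that half, giving the bound $2^{\mathbf{c}(X)-1}$. If $i=0$, the count is the single term $\binom{p}{s/2}\le2^{p-1}=2^{\mathbf{c}(X)-1}$ by \eqref{eq:BoundBinom}, exactly your degenerate case. This is in substance the paper's own proof (which handles the first situation by a complementation/element-removal argument in its Cases 1 and 2 and the pure-pair situation in its Case 3), so once the parity step is corrected your route collapses onto the paper's.
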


\begin{proof}
Let $n=|X|$. For a nonnegative integer $k$, denote the number of inverse-closed $k$-subsets of $X$ by $n_k$.

Case 1:  $n$ is odd.

Let $A=\{S\subseteq X: S=S^{-1},\, |S| \text{ is odd}\}$, $B=\{S\subseteq X: S=S^{-1},\, |S| \text{ is even}\}$, and let $f :  A  \rightarrow B$
be the map such that $f(S)=X\setminus S$. It is clear that $f$ is a bijection. This together with the observation $|A\cup B|=2^{\mathbf{c}(X)}$ implies that $|A|=|B|=2^{\mathbf{c}(X)-1}$. Hence we have $n_k\leq2^{\mathbf{c}(X)-1}$ for every nonnegative integer $k$.

Case 2:  $n$ is even and $\I(X)\neq\emptyset$.

Take an element $x$ in $\I(X)$. Then $X\setminus \{x\}$ is an inverse-closed subset of size $n-1$, which is odd. By the conclusion of Case 1, we know that the number of inverse-closed $k$-subsets of $X\setminus \{x\}$ is at most
\begin{equation}\label{eq:noalpha}
2^{\mathbf{c}(X\setminus \{x\})-1}=2^{\frac{(|X\setminus \{x\}|+|\mathcal{I}(X\setminus \{x\})|)}{2}-1}=2^{\frac{(|X|-1+|\mathcal{I}(X)|-1)}{2}-1}=2^{\frac{|X|+|\mathcal{I}(X)|}{2}-2}=2^{\mathbf{c}(X)-2}.
\end{equation}
For a nonnegative integer $\ell$, let $C_{\ell}=\{S\subseteq X: S=S^{-1},\, x\notin S \text{ and } |S|=\ell\}$, $D_{\ell}=\{S\subseteq X: S=S^{-1},\, x \in S \text{ and } |S|=\ell\}$, and let $g_{\ell} :  C_{\ell}  \rightarrow D_{\ell+1}$ be the map such that $g_{\ell}(S)=S\cup\{x\}$. It is clear that $g_{\ell}$ is a bijection between $C_{\ell}$ and $D_{\ell+1}$. Hence for each positive integer $k$, the number of inverse-closed $k$-subsets of $X$ that contains $x$ is exactly the number of inverse-closed $(k-1)$-subsets of $X\setminus \{x\}$, which is at most $2^{\mathbf{c}(X)-2}$ by~\eqref{eq:noalpha}. Combining this with~\eqref{eq:noalpha}, we obtain $n_k\leq2^{\mathbf{c}(X)-2}+2^{\mathbf{c}(X)-2}=2^{\mathbf{c}(X)-1}$ for every nonnegative integer $k$.

Case 3:  $n$ is even and $\I(X)=\emptyset$.

Suppose $n=2m$ and write $X=\{x_1,x_2,\ldots, x_{2m}\}$ with $x_i^{-1}=x_{i+m}$ for $i\in\{1,\dots,m\}$. Note that an inverse-closed subset $S$ of $X$ has even order and is determined by $S\cap\{x_1,x_2,\ldots, x_{m}\}$, whose size is exactly $|S|/2$. Hence for a nonnegative even integer $k$, we have
\[
n_k\leq\binom{m}{k/2}\leq2^{m-1}=2^{\mathbf{c}(X)-1},
\]
where the second inequality is from~\eqref{eq:BoundBinom}.
Since $n_k=0$ when $k$ is odd, it follows that $n_k\leq2^{\mathbf{c}(X)-1}$ for every nonnegative integer $k$.
\end{proof}

The following result, proved in \cite[Section~4]{Spiga-suborbits2021+}, is useful in estimating the number of Cayley graphs on $R$ whose full automorphism group contains a given overgroup of $R$.


\begin{lemma}[Spiga]\label{lem:GTS}
Let $G$ be a transitive group properly containing a regular subgroup $R$ on $R$, where the group $R$, identified with its right regular permutation representation, is neither abelian of exponent greater than $2$ nor generalized dicyclic. Let $G_1$ be the stabilizer of $1\in R$ in $G$, and let $\iota$ be the permutation on $R$ sending every element to its inverse. The number of $\langle G_1, \iota\rangle$-orbits on $R$ is at most $\mathbf{c}(R)-|R|/96$. In particular, the number of (labeled) Cayley graphs $\Gamma$ on $R$ with $G\leq \Aut(\Gamma)$ is at most $2^{\mathbf{c}(R)-\frac{|R|}{96}}$.
\end{lemma}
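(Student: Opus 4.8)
The plan is to reduce the counting statement about Cayley graphs to the orbit-count statement, and then to obtain the orbit count from structural information about the point stabilizer $G_1$. For the reduction: a labeled Cayley graph $\Gamma=\Cay(R,S)$ with $G\leq\Aut(\Gamma)$ corresponds to an inverse-closed subset $S\subseteq R\setminus\{1\}$ that is a union of $G_1$-orbits on $R$; since $S$ must also be inverse-closed, $S$ is in fact a union of $\langle G_1,\iota\rangle$-orbits on $R\setminus\{1\}$. Hence the number of such $\Gamma$ is exactly $2^{t}$, where $t$ is the number of $\langle G_1,\iota\rangle$-orbits on $R\setminus\{1\}$, and so it suffices to bound $t$ (equivalently the number of orbits on $R$) by $\mathbf{c}(R)-|R|/96$. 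Here the quantity $\mathbf{c}(R)$ is precisely the number of $\langle\iota\rangle$-orbits on $R$ (a fixed point of $\iota$ is an involution or the identity, contributing $|\I(R)|$ singleton orbits, the rest pairing up), so the assertion is that enlarging $\langle\iota\rangle$ to $\langle G_1,\iota\rangle$ fuses at least $|R|/96$ of these orbits.

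Next I would exploit that $G$ properly contains $R$, so $G_1\neq 1$. The heart of the argument is to produce, from the nontriviality of $G_1$, a large set of elements of $R$ that get genuinely merged. The standard device (going back to Babai--Godsil and developed in the works cited) is to pick a nonidentity element $g\in G_1$ and consider its support or its action on $R$: since $R$ is regular, $g$ acts on the vertex set $R$ with no fixed points outside the stabilized structure in a controlled way, and one shows that a positive proportion of elements $x\in R$ satisfy $x^g\neq x^{\pm 1}$, each such $x$ forcing the $\langle G_1,\iota\rangle$-orbit of $x$ to be strictly larger than its $\langle\iota\rangle$-orbit. Counting these with multiplicity (each fused orbit absorbs boundedly many $\langle\iota\rangle$-orbits) yields that the number of $\langle G_1,\iota\rangle$-orbits drops from $\mathbf{c}(R)$ by at least a constant fraction of $|R|$. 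The exclusion of abelian groups of exponent $>2$ and generalized dicyclic groups is exactly what guarantees this proportion is bounded below by an absolute constant: in those excluded families one can have $x^g=x^{-1}$ for all relevant $x$ with $g$ the inversion-type automorphism, which would fuse nothing new beyond $\iota$, so they must be ruled out.

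The main obstacle, and the technical core that I expect to occupy most of the proof, is establishing the explicit constant $1/96$: one must handle the several cases for the structure of a minimal nontrivial $G_1$ (or of $\langle g\rangle$ for a well-chosen $g\in G_1$) — roughly, whether $g$ has small support, whether $g$ inverts a large abelian-like section of $R$, and whether $R$ has many involutions (where Lemma~\ref{lem:inv-ea2} forces $R$ elementary abelian, a case that must be treated separately since there $\mathbf{c}(R)$ is already close to $|R|$). In each case one needs a quantitatively sharp count of elements $x$ with $\{x^g,x^{-1},x\}$ all distinct, or more precisely with $x$ lying in a $\langle G_1,\iota\rangle$-orbit of size $\geq 3$, and then to check that the worst case over all cases still gives a loss of at least $|R|/96$. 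I would organize this as a short sequence of claims: (i) reduce to a single non-identity $g\in G_1$; (ii) bound from above the number of $x\in R$ with $x^g\in\{x,x^{-1}\}$; (iii) conclude that at least $|R|/96$ of the $\langle\iota\rangle$-orbits are non-trivially fused; (iv) assemble the orbit count and then the $2^{\mathbf{c}(R)-|R|/96}$ bound on Cayley graphs. Since the constant $1/96$ is not claimed to be optimal, there is slack to absorb the case analysis crudely rather than optimally.
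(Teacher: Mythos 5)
Your reduction is correct and matches the standard one: a labeled Cayley graph $\Cay(R,S)$ with $G\leq\Aut(\Cay(R,S))$ corresponds to an inverse-closed union of $G_1$-orbits, hence a union of $\langle G_1,\iota\rangle$-orbits, so the graph count is $2^{t}$ with $t$ the number of such orbits; and $\mathbf{c}(R)$ is indeed the number of $\langle\iota\rangle$-orbits on $R$, so the whole lemma amounts to showing that adjoining $G_1$ to $\langle\iota\rangle$ fuses at least $|R|/96$ orbits. Note, however, that the paper does not prove this lemma at all: it is quoted verbatim from Section~4 of Spiga's paper on transitive groups with many suborbits of cardinality at most $2$, so the only thing to compare your argument against is that external proof.

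The genuine gap is that your steps (ii) and (iii) --- the entire quantitative core --- are asserted rather than proved, and the single-element strategy you propose does not work as stated. Picking one non-identity $g\in G_1$ and bounding $\#\{x: x^g\in\{x,x^{-1}\}\}$ cannot succeed in general: $g$ may have arbitrarily small support on $R$ (fixing all but a bounded number of points), in which case $x^g=x$ for almost all $x$ and $g$ alone fuses essentially nothing, even though $R$ is far from the excluded families. One is forced to work with the whole group $\langle G_1,\iota\rangle$ and to show that the union of its orbits of cardinality at most $2$ cannot cover more than a definite fraction of $R$ unless $R$ is abelian of exponent greater than $2$ or generalized dicyclic (the extremal configurations where every suborbit of $\langle G_1,\iota\rangle$ can have size at most $2$). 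Establishing that dichotomy with the explicit constant $1/96$ is precisely the main theorem of the cited Spiga paper and occupies a substantial structural analysis there; it is not something that can be absorbed by "slack in the case analysis." So while your outline correctly locates the difficulty, it does not constitute a proof of the lemma.
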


The next lemma is from~\cite[Theorem 1.6]{Morris-MP2022}, whose proof involves applying results from~\cite{Spiga-eq2021}.

\begin{lemma}[Morris-Moscatiello-Spiga]\label{lem:STEP2}
Let $R$ be a finite group of order $r$ that is neither abelian of exponent greater than $2$ nor generalized dicyclic, and let $N$ be a nontrivial proper normal subgroup of $R$. The number of inverse-closed subsets $S$ of $R$ such that there exists a non-identity $g\in\N_{\Aut(\Cay(R,S))}(N)$ that fixes the vertex $1$ and stabilizes each $N$-orbit is at most $2^{\mathbf{c}(R)-\frac{r}{192|N|}+\log_2^2{r}+3}$.
\end{lemma}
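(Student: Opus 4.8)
The plan is to reduce a ``bad'' inverse-closed $S$ to a bounded amount of combinatorial data attached to an obstructing automorphism $g$, to show that each value of this data is compatible with only few $S$, and then to sum over the (few) admissible values. Throughout I would use that the $N$-orbits on $R$ are the cosets of $N$, a partition which is $R$-invariant since $N\trianglelefteq R$.

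First I would observe that we may take $g$ of prime order $p$, since passing to a suitable power of $g$ preserves being non-identity, normalising $N$, fixing the vertex $1$, and stabilising every $N$-orbit. As $g$ normalises $N$ and fixes $1$, the restriction $g|_N$ lies in the stabiliser of the identity in the holomorph $N\rtimes\Aut(N)$, so $\bar\phi:=g|_N\in\Aut(N)$; moreover conjugation by $g$ induces this same automorphism on $N$ regarded inside $\Sym(R)$, whence $g(xn)=g(x)\,\bar\phi(n)$ on every coset $xN$. Thus $g$ restricted to a coset is determined by $\bar\phi$ together with the single value $g(x)\in xN$. Two consequences: if $\bar\phi\neq1$ then on each coset $g$ fixes either $|\mathrm{Fix}(\bar\phi)|\le|N|/2$ points or none, so $g$ moves at least $r/2$ vertices; and if $\bar\phi=1$ then $g$ centralises $N$ and acts on each coset as a left translation by an element of $N$ (well-defined up to conjugacy), the set $P$ of cosets actually moved being non-empty and not containing $N$ (because $g$ fixes $1$). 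In either case $G:=\langle R,g\rangle$ is a transitive group properly containing $R$.

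The heart of the matter is to convert ``$g\in\Aut(\Cay(R,S))$'' into constraints on $S$. Writing a typical arc as $\{u,v\}$ and letting $u$ range over a fixed coset, the automorphism condition forces, for each coset $F$ of $N$, the ``slice'' $S\cap F$ to be invariant under permutations of $F$ obtained by composing a left translation, a right translation (by elements of $N$ read off from the coset-data of $g$), and a conjugate of $\bar\phi$; equivalently, with $G=\langle R,g\rangle$ the set $S$ must be a union of $\langle G_1,\iota\rangle$-orbits on $R$, the point being to record \emph{which} cosets are constrained and \emph{how strongly}. In the centralising case $\bar\phi=1$ I would show that $S\cap F$ is unconstrained precisely when $F$ lies in a certain proper subgroup $T<R/N$ determined by $P$ (roughly, the translation-stabiliser of $P$), while for every one of the at least $|R/N|-|T|\ge\tfrac12|R/N|=r/(2|N|)$ cosets $F\notin T$ the slice $S\cap F$ is forced to be a union of cosets of a non-trivial subgroup of $N$, hence to lie in a subset of the inverse-closed subsets of $F$ of relative size at most $\tfrac12$. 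This is the decisive gain: the obstruction is now encoded by the \emph{subgroup} $T<R/N$, up to $O(1)$ further bits (the prime $p$ and which subgroup of $N$ acts). Summing the number of compatible $S$ over all proper subgroups $T$ of $R/N$ — of which there are at most $2^{\frac14\log_2^2(r/|N|)}\le2^{\log_2^2 r}$ by the classical bound on the number of subgroups of a finite group — and using that a fixed $T$ is realised by at most $2^{\mathbf{c}(R)-r/(192|N|)}$ subsets $S$ (the $\ge r/(2|N|)$ constrained slices far more than accounting for this loss), one gets the asserted estimate up to the additive $+3$. The remaining case $\bar\phi\neq1$ I would treat in the same spirit, now exploiting the large support of $g$ so that Lemma~\ref{lem:GTS} applies to $G=\langle R,g\rangle$, and again funnelling the finitely many possible ``shapes'' of $g$ through subgroups of $N$ and of $R/N$.

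The main obstacle is exactly this bookkeeping of the obstructing elements $g$. A naive union bound over all eligible permutations $g$ is hopeless: even when $|N|$ is small there are $2^{\Omega(r/|N|)}$ of them, vastly exceeding the available slack $2^{r/96-r/(192|N|)+\log_2^2 r+3}$ one would need when combining with the per-$G$ bound of Lemma~\ref{lem:GTS}. The real work is therefore to funnel the obstruction into data ranging over a set of size only $2^{O(\log_2^2 r)}$ — in essence a subgroup of $R/N$ together with a constant amount of extra information — while retaining, for each such value, the per-value bound $2^{\mathbf{c}(R)-r/(192|N|)}$ on the number of $S$; carrying this out rigorously, and in particular getting the constant $192$ and the additive term right, is where the quantitative counting lemmas of Spiga~\cite{Spiga-eq2021}, together with Lemma~\ref{lem:GTS}, enter.
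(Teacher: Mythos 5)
First, a point of reference: the paper does not prove this statement at all --- it is imported verbatim from \cite[Theorem 1.6]{Morris-MP2022}, so there is no in-paper proof to compare your argument against; your proposal has to stand on its own.

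On its own it has a genuine gap at exactly the step you flag as ``the heart of the matter''. The reductions you make are sound (passing to $g$ of prime order, reading off $\bar\phi\in\Aut(N)$ from the holomorph so that $g(xn)=g(x)\bar\phi(n)$, the dichotomy $\bar\phi=1$ versus $\bar\phi\neq1$, and the diagnosis that a naive union bound over the roughly $|N|^{r/|N|}$ eligible $g$ is hopeless). But the decisive claim --- that in the centralising case the cosets on which $S$ is unconstrained are precisely those lying in a proper subgroup $T<R/N$, so that the obstruction is parametrised by the $2^{O(\log_2^2 r)}$ subgroups of $R/N$ plus $O(1)$ extra data --- is asserted, not proved, and it is false for a single automorphism $g$: the set $P$ of cosets moved by one graph automorphism acting coset-wise by left translations is an essentially arbitrary subset of $R/N$, not the complement of a subgroup, and its ``translation-stabiliser'' can be trivial, in which case your parametrisation collapses back to enumerating the $g$'s themselves. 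To make a statement of this shape true one must replace the single $g$ by the group generated by all its $R$-conjugates intersected with the vertex stabiliser (an $R$-invariant subgroup of $N^{R/N}$) and run a block-system argument to extract a subgroup of $R/N$ --- this, together with the delicate bookkeeping when $|N|=2$ (where the per-coset saving exactly cancels the per-coset choice of $g$, which is why the naive bound fails), is the actual content of \cite{Spiga-eq2021} and \cite[Theorem 1.6]{Morris-MP2022}. Your final count also quietly assumes that for a \emph{fixed} $T$ all compatible $g$ impose a common constraint on each slice $S\cap F$, whereas the relevant nontrivial subgroup of $N$ varies with $g$, and it ignores the pairing of a coset $F$ with $F^{-1}$ imposed by inverse-closedness. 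The case $\bar\phi\neq1$ is likewise deferred in one sentence. In short: the skeleton and the identification of the difficulty are correct, but the lemma is not proved; the step you describe as ``the real work'' is indeed the entire theorem being cited.
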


Let $G$ be a group that acts transitively on a set $\Omega$ with $|\Omega|>1$. A nonempty subset $B$ of $\Omega$ is called a \emph{block} of $G$ if, for each $g\in G$, either $B^g=B$ or $B^g\cap B=\emptyset$. In this case, we call $\{B^g:g \in G\}$ a \emph{block system} of $G$. It is clear that the group $G$ has $\Omega$ and the singletons as blocks, which are viewed as the \emph{trivial} ones. The group $G$ is said to be \emph{primitive} if it only has trivial blocks. It is easy to see that $G$ is primitive if and only if a point stabilizer in $G$ is a maximal subgroup of $G$. Note that there are eight O'Nan-Scott types of the finite primitive permutation groups: HA, HS, HC, SD, CD, TW, AS, PA, as divided in~\cite{Baddeley2003}.

\begin{lemma}\emph{(\cite[Lemmas 2.4 and 6.1]{Morris-SV2015})}\label{lem:typeex}
A primitive permutation group with an abelian point stabilizer or a generalized dicyclic point stabilizer is of type $\mathrm{HA}$.
\end{lemma}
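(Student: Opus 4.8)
The plan is to run through the O'Nan-Scott classification of the finite primitive groups and eliminate every type except $\mathrm{HA}$. Write $H=G_\alpha$ for a point stabilizer; since $G$ is primitive, $H$ is a maximal core-free subgroup of $G$. Let $N=\Soc(G)$, so that $N=T^k$ for a simple group $T$ and some $k\geq1$; since a nontrivial normal subgroup of a primitive group is transitive, $N$ is transitive and $|H\cap N|=|N|/|\Omega|$. The objective is to force $T$ to be cyclic of prime order: once $N$ is elementary abelian, primitivity makes $N$ a regular minimal normal subgroup, which is exactly type $\mathrm{HA}$. So assume for contradiction that $T$ is nonabelian simple. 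I would record at the outset the elementary facts that carry the bookkeeping: a subgroup of an abelian group is abelian, a subgroup of a generalized dicyclic group is abelian or generalized dicyclic, and a generalized dicyclic group is metabelian, hence solvable; consequently an abelian or generalized dicyclic group is solvable and can neither contain, nor have as a section, a nonabelian simple group.

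I would next dispose of the types $\mathrm{SD}$, $\mathrm{CD}$, $\mathrm{HS}$, $\mathrm{HC}$, $\mathrm{TW}$ using only the standard description of point stabilizers. In the types $\mathrm{SD}$, $\mathrm{CD}$, $\mathrm{HS}$, $\mathrm{HC}$ the intersection $H\cap N$ is conjugate to a ``diagonal'' subgroup, and comparing $|H\cap N|=|N|/|\Omega|$ with the containment $H\cap N\leq T_{\delta_1}\times\dots\times T_{\delta_k}$ shows $H\cap N\cong T^{j}$ for some $j\geq1$; in the type $\mathrm{TW}$ the subgroup $N$ is regular, and one uses instead that, by the very definition of the twisted wreath product, $H$ has a section isomorphic to $T$. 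In every case $H$ contains, or has as a section, a nonabelian simple group, which contradicts that $H$ is abelian or generalized dicyclic. So none of these five types occurs.

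It remains to treat the types $\mathrm{AS}$ and $\mathrm{PA}$, where $H$ need not be ``large''. Here the clean input is a theorem of Herstein: a finite group with an abelian maximal subgroup is solvable. When $H$ is abelian this applies to $H\leq G$ directly and shows $G$ is solvable; since a solvable primitive permutation group has a regular elementary abelian minimal normal subgroup, $G$ is of type $\mathrm{HA}$, and this argument actually makes the preceding type analysis unnecessary in the abelian case. For the generalized dicyclic case I would first attempt to adapt Herstein's argument to show that a finite group with a generalized dicyclic maximal subgroup is solvable, which would close the proof the same way; failing that, one returns to the $\mathrm{AS}$ and $\mathrm{PA}$ cases and, after peeling off the product action in $\mathrm{PA}$, reduces to showing that a nonabelian simple group has no abelian or generalized dicyclic maximal subgroup. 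The abelian half of that statement is Herstein's theorem again; the generalized dicyclic half is the part I expect to be the genuine obstacle, since a generalized dicyclic group is a solvable group with an abelian subgroup of index $2$ and a central involution, and excluding it as a maximal subgroup of the finite simple groups seems to require the classification of finite simple groups together with information on their maximal subgroups. The other ingredients --- the elementary facts above and the point-stabilizer bookkeeping for the five diagonal-type groups --- are routine.
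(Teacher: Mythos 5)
Your elimination of the types $\mathrm{HS}$, $\mathrm{HC}$, $\mathrm{SD}$, $\mathrm{CD}$ and $\mathrm{TW}$ is sound: in each of these cases the point stabilizer contains, or has as a section, a nonabelian simple group, whereas abelian and generalized dicyclic groups are metabelian, so all of their subgroups and sections are solvable. The abelian half of the statement is also genuinely complete by your route, since Herstein's theorem (a finite group with an abelian maximal subgroup is solvable) applies to $G$ itself and a solvable primitive group is of type $\mathrm{HA}$. The gap is exactly where you place it: a generalized dicyclic point stabilizer in the types $\mathrm{AS}$ and $\mathrm{PA}$ is never excluded, and ``first attempt to adapt Herstein's argument\dots failing that'' is not an argument. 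Herstein's proof uses the commutativity of the maximal subgroup $M$ in an essential way (via the structure of $M\cap M^g$), and there is no off-the-shelf extension to maximal subgroups that are merely abelian-by-$\Cy_2$; ruling out such a solvable maximal subgroup of an almost simple group is precisely the hard content of the lemma, and your own assessment that it requires the classification of finite simple groups together with knowledge of maximal subgroups is the honest one. As written, then, the proposal proves the abelian case and reduces the generalized dicyclic case to an unproved assertion.

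Note also that the paper does not prove this lemma at all: it is imported from Morris--Spiga--Verret, whose Lemma~6.1 is exactly the generalized dicyclic case your proposal leaves open (their Lemma~2.4 covers the abelian case). So there is no ``more elementary'' argument hiding in the source that you failed to find; the missing step is the substance of the cited result, and a self-contained proof would have to either reproduce their classification-dependent analysis of the $\mathrm{AS}$ and $\mathrm{PA}$ types or supply a new elementary replacement for it. The remainder of your write-up --- the solvability bookkeeping and the diagonal- and twisted-wreath-type eliminations --- is correct and routine, as you say.
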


Let $\Gamma$ be a digraph on vertex set $V$. A partition of $V$ into sets $C_1,C_2,\ldots,C_t$ is said to be \emph{equitable} if for a vertex $v\in C_i$, the number of outneighbours of $v$ in $C_j$ depends only on the choice of $C_i$ and $C_j$, that is, the number of outneighbours of any $v\in C_i$ in $C_j$ is a constant $b_{i,j}$. The following observation shows that if $G$ is a subgroup of $\Aut(\Gamma)$, then the partition formed by the $G$-orbits on $V$ is  equitable.

\begin{lemma}\label{lem:orbit-equitable}
Let $\Gamma$ be a digraph, and let $G\leq\Aut(\Gamma)$. Given $G$-orbits $B$ and $C$ on the vertex set of $\Gamma$, vertices in $B$ are adjacent to the same number of vertices in $C$.
\end{lemma}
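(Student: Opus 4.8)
The plan is to exploit two facts about $G$: every element of $G$ fixes each $G$-orbit setwise, and every element of $G$, being an automorphism of $\Gamma$, carries arcs to arcs. First I would fix two vertices $u,v\in B$; since $G$ acts transitively on its orbit $B$, there is some $g\in G$ with $u^g=v$. The key observation is that $C^g=C$: indeed $C$ is a single $G$-orbit, hence invariant under every element of $G$, so in particular under $g$.

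Next I would use that $g\in\Aut(\Gamma)$, so for all vertices $x,y$, the pair $(x,y)$ is an arc of $\Gamma$ if and only if $(x^g,y^g)$ is. Combining this with $C^g=C$, the map $g$ sends the set $N_u=\{w\in C:(u,w)\text{ is an arc}\}$ into $N_v=\{w\in C:(v,w)\text{ is an arc}\}$; applying the same reasoning to $g^{-1}$ (which also lies in $G\leq\Aut(\Gamma)$ and satisfies $v^{g^{-1}}=u$, $C^{g^{-1}}=C$) shows $g^{-1}$ sends $N_v$ into $N_u$. Hence $g$ restricts to a bijection $N_u\to N_v$, so $|N_u|=|N_v|$. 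As $u,v$ were arbitrary in $B$, every vertex of $B$ has the same number of out-neighbours in $C$, which is the assertion. (The argument reads verbatim if one interprets ``adjacent to'' via in-neighbours or via undirected edges, since arc-preservation is symmetric under $g^{\pm1}$ in each case.)

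I do not expect any real obstacle; the only point that needs to be stated carefully is that a $G$-orbit is setwise fixed by every element of $G$, which is precisely what forces $g$ to map $C$-neighbours of $u$ to $C$-neighbours of $v$. This lemma is simply the special case of the standard principle that the orbits of a group of digraph automorphisms form an equitable partition, recorded here in the form needed later.
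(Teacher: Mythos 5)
Your proof is correct and follows essentially the same argument as the paper: pick $u,v\in B$, use transitivity to find $g\in G$ with $u^g=v$, observe $C^g=C$, and conclude that $g$ maps out-neighbours of $u$ in $C$ bijectively onto out-neighbours of $v$ in $C$. The paper phrases this more compactly as $|\mathrm{N}^{+}(u)\cap C|=|(\mathrm{N}^{+}(u))^{g}\cap C^{g}|=|\mathrm{N}^{+}(v)\cap C|$, but the content is identical.
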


\begin{proof}
For a vertex $v$ of $\Gamma$, denote the set of outneighbours of $v$ by $\mathrm{N}^{+}(v)$. Let $u,w\in B$. There is an element $\varphi\in G\leq\Aut(\Gamma)$ such that $u^{\varphi}=w$ and $(\mathrm{N}^{+}(u))^{\varphi}=\mathrm{N}^{+}(w)$. Note that $C^{\varphi}=C$. Hence
\[
|\mathrm{N}^{+}(u)\cap C|=|(\mathrm{N}^{+}(u)\cap C)^{\varphi}|=|(\mathrm{N}^{+}(u))^{\varphi}\cap C^{\varphi}|=|\mathrm{N}^{+}(w)\cap C|,
\]
which means $u$ and $w$ have the same number of outneighbours in $C$.
\end{proof}

We close this section with a somewhat technical lemma.

\begin{lemma}\label{lem:normalorbit}
Let $R<G\leq\Sym(R)$ such that $R$, identified with its right regular permutation representation, is a maximal subgroup of $G$, and let $L$ be a normal subgroup of $G$. The $L$-orbit on $R$ containing $1$ is a subgroup of $R$. If it is further a normal subgroup of $R$, then either $L\leq R$ or every $L$-orbit is stabilized by $G_1$.
\end{lemma}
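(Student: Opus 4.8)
The plan is to use the standard fact that the orbits of a normal subgroup of a transitive group form a block system, together with the maximality of $R$ in $G$, so I will organize the argument around the block system $\Sigma$ of $L$-orbits.

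For the first assertion, note that $G$ is transitive on $R$ since it contains the regular subgroup $R$, and that the $L$-orbits form a block system of $G$, hence of $R$, because $L\trianglelefteq G$. Let $B$ be the $L$-orbit containing $1$. Under right multiplication the translate of $B$ by $r\in R$ is the right coset $Br$, so the block condition says $Br=B$ or $Br\cap B=\emptyset$ for every $r$. Since $r\in B$ forces $r\in Br\cap B$ and hence $Br=B$, while $Br=B$ forces $r=1\cdot r\in B$, we get $\{r\in R:Br=B\}=B$; as the left-hand side is the setwise stabilizer of $B$ for the right-translation action of $R$, it is a subgroup, and so $B\leq R$.

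Now suppose in addition that $B\trianglelefteq R$. Since $R\leq RL\leq G$, maximality of $R$ forces $RL=R$ or $RL=G$. If $RL=R$ then $L\leq R$, the first alternative. So assume $RL=G$. I would then identify the set $\Sigma$ of $L$-orbits with the set of right cosets of $B$ in $R$: the $L$-orbits are blocks of $G$, the group $R$ acts transitively on them (because $G=RL$ and $L$ fixes every block setwise, so $R$ permutes the blocks exactly as $G$ does), and by the first part the stabilizer in $R$ of the block $B$ is precisely $B$; hence $\Sigma=\{Br:r\in R\}$ with $R$ acting by right translation. Let $K$ be the kernel of the induced homomorphism $G\to\Sym(\Sigma)$. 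By definition $K$ is exactly the set of elements of $G$ that stabilize every $L$-orbit, so it remains to prove $G_1\leq K$.

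Since $L$ fixes each of its orbits setwise, $L\leq K$, and therefore $G=RL$ implies that the image of $G$ in $\Sym(\Sigma)$ equals that of $R$. That image is the action of $R$ on the right cosets of the normal subgroup $B$, whose kernel is $\Core_R(B)=B$; thus $G$ acts on $\Sigma$ through the regular representation of $R/B$, in which every point stabilizer is trivial. But $G_1$ fixes $1$, hence fixes the block $B$ containing $1$, so the image of $G_1$ in $\Sym(\Sigma)$ fixes a point and is therefore trivial, i.e.\ $G_1\leq K$, as required. I expect the one point needing care to be the identification of $\Sigma$ with the cosets of $B$, since this is exactly where both hypotheses---the maximality of $R$, through $RL=G$, and the normality of $B$ in $R$---are genuinely used; everything else is routine manipulation of blocks. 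As a cross-check, one can reach $G_1\leq K$ by an order count: $G=RK$ with $R\cap K=\Core_R(B)=B$ gives $|G_1|=|G|/|R|=|K|/|B|$, while $K\geq L$ is transitive on the block $B$, so $|K_1|=|K|/|B|=|G_1|$ and hence $G_1=K_1\leq K$.
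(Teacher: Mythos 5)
Your proof is correct and takes essentially the same route as the paper's: both identify the $L$-orbits with the right cosets of the subgroup $Q=1^L$, apply the maximality of $R$ to a product subgroup ($RL$ in your version, $MR$ with $M$ the kernel of the action on the block system in the paper's), and in the remaining case use the normality of $Q$ in $R$ to see that $G$ acts on the blocks through the regular representation of $R/Q$, forcing $G_1$ into the kernel. The paper's element computation (writing $g=hx$ with $h\in M$, $x\in R$ and deducing $x\in Q\leq M$) is just an unpacked form of your observation that point stabilizers of a regular action are trivial.
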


\begin{proof}
Since $L$ is normal in $G$, the $L$-orbits on $R$ form a block system of $G$. Let $Q=1^L$ be the $L$-orbit containing $1$. Since $R$ acts regularly by right multiplication, we have $Q\leq R$. Hence the block system formed by the $L$-orbits coincides with the set of right $Q$-cosets of $R$.
Assume that $Q$ is a normal subgroup of $R$.

Let $M$ be the largest subgroup of $G$ stabilizing every $L$-orbit. Hence $M\trianglelefteq G$ and so $MR$ is a group. Now $R\leq MR\leq G$. Since $R$ is maximal in $G$, we conclude that either $R=MR$ or $MR=G$. If $R=MR$, then $L\leq M\leq R$, as desired. (In this case, we actually have $L=M$, since $M\leq G_L=G_1L$ and $R\cap G_1L=(R\cap G_1)L=L$.)
Next assume $MR=G$.
Take an arbitrary $g\in G_1$. It follows that $Q^g=1^{Lg}=1^{gL}=1^L=Q$. Since $g\in G=MR$, we may write $g=hx$ with $h\in M$ and $x\in R$, which implies
$Q^g=Q^{hx}=Qx$. This together with $Q^g=Q$ leads to $x\in Q$. Observing that the normal subgroup $Q$ of $R$ stabilizes every right $Q$-coset of $R$, we have $Q\leq M$ and so $g=hx\in MQ=M$. Therefore, $G_1\leq M$, which means that every $L$-orbit is stabilized by $G_1$.
\end{proof}

\section{Subsets evenly intersecting cosets}\label{Sec3}

Let $R$ be a group with a subgroup $Q$, and let $\Delta$ be a union of some right $Q$-cosets in $R$, say, $\Lambda_1,\Lambda_2,\dots,\Lambda_b$ (so that $|\Delta|=b|Q|$). We say that a subset $S$ of $R$ \emph{intersects $\Delta$ evenly} if
\[
|S\cap\Lambda_1|=|S\cap\Lambda_2|=\dots=|S\cap\Lambda_b|.
\]
We are interested in the case when $\Delta$ comes from an orbit of some group acting on the set $R/Q$ of right $Q$-cosets in $R$.
For the group actions in the two subsections below, we estimate the number of such $S$.

\subsection{Cosets from a double coset}

For a subgroup $Q$ of $R$, under the action of $Q$ on $R/Q$ by right multiplication, an orbit of $Q$ is a double coset of $Q$ in $R$.
This subsection is devoted to the proof of the following proposition, which estimates the number of inverse-closed subsets in a group that intersect evenly with all the double cosets of a given subgroup. Recall the notation
\[
Q\backslash R/Q=\{QxQ: x\in R\}
\]
for the set of double $Q$-cosets in $R$.

\begin{proposition}\label{prop:ns-nonnorm}
Let $R$ be a group, let $Q$ be a non-normal subgroup of $R$ and let
\[
\mathcal{L}=\{S\subseteq R: S=S^{-1},\, S \text{ intersects $\Delta$ evenly for each } \Delta\in Q\backslash R/Q\}.
\]
We have $|\mathcal{L}|\leq 2^{\mathbf{c}(R)-\frac{1}{8}|R\,{:}\,Q|}$, where $\mathbf{c}(R)=(|R|+|\mathcal{I}(R)|)/2$ as in~\eqref{eq:cr}.
\end{proposition}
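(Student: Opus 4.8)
The plan is to break $R$ into inverse-closed pieces on which the various ``intersects evenly'' conditions decouple, and to show that each piece either sits inside $\N_R(Q)$ (and then imposes no condition) or produces a saving proportional to the number of cosets it contains. Write $\iota$ for inversion; it permutes $Q\backslash R/Q$ via $QxQ\mapsto Qx^{-1}Q$, so we would group the double cosets into $\iota$-orbits, each of size $1$ or $2$, and for such an orbit $O$ put $\Sigma_O=\bigcup_{\Delta\in O}\Delta$. Each $\Sigma_O$ is inverse-closed, $\mathbf{c}$ is additive over the resulting partition of $R$, so $\sum_O\mathbf{c}(\Sigma_O)=\mathbf{c}(R)$, and an inverse-closed $S$ lies in $\mathcal{L}$ precisely when $S\cap\Sigma_O$ meets each $\Delta\in O$ evenly for every $O$; hence $|\mathcal{L}|=\prod_O|\mathcal{L}_O|$, with $\mathcal{L}_O$ the obvious local set. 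A double coset is a single right coset exactly when its representative lies in $\N_R(Q)$, so the trivial double cosets are the $|\N_R(Q):Q|$ right cosets inside $\N_R(Q)$, and for an orbit built from these we have $|\mathcal{L}_O|=2^{\mathbf{c}(\Sigma_O)}$. As $Q$ is non-normal, $|R:\N_R(Q)|\ge2$, so $|\N_R(Q):Q|\le\tfrac12|R:Q|$, and at least $\tfrac12|R:Q|$ right cosets lie in nontrivial double cosets. It therefore suffices to prove $|\mathcal{L}_O|\le2^{\mathbf{c}(\Sigma_O)-\frac14 m_O}$ whenever $O$ contains a nontrivial double coset, $m_O$ being the number of right cosets inside $\Sigma_O$; the product over all $O$ then gives $|\mathcal{L}|\le2^{\mathbf{c}(R)-\frac14\cdot\frac12|R:Q|}=2^{\mathbf{c}(R)-\frac18|R:Q|}$.

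The easy local case is an orbit $O=\{\Delta,\iota(\Delta)\}$ with $\iota(\Delta)\ne\Delta$ and $\Delta=QxQ$ containing $b\ge2$ right cosets, so $m_O=2b$. Then $\Sigma_O=\Delta\sqcup\iota(\Delta)$ contains no involutions, $\mathbf{c}(\Sigma_O)=b|Q|$, an inverse-closed $S\subseteq\Sigma_O$ is determined by $S\cap\Delta$, and membership in $\mathcal{L}_O$ forces in particular that $S\cap\Delta$ meet each of the $b$ right cosets of $\Delta$ in a set of a common size $k$. Using Vandermonde's identity and~\eqref{eq:BoundBinom},
\[
|\mathcal{L}_O|\le\sum_{k=0}^{|Q|}\binom{|Q|}{k}^{b}\le\binom{|Q|}{\lfloor|Q|/2\rfloor}^{b-2}\sum_{k=0}^{|Q|}\binom{|Q|}{k}^{2}=\binom{|Q|}{\lfloor|Q|/2\rfloor}^{b-2}\binom{2|Q|}{|Q|}\le2^{(|Q|-1)(b-2)}\cdot2^{2|Q|-1}=2^{b|Q|-(b-1)},
\]
and since $b-1\ge b/2=\tfrac14 m_O$ for $b\ge2$, this is what we want.

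The harder local case is an orbit $O=\{\Delta\}$ with $\iota(\Delta)=\Delta$ and $b\ge2$ right cosets, so $m_O=b$. Here I would use that in $\Delta=QxQ$ every right coset meets every left coset, all these intersections having the common size $t:=|Q|/b$ (because $Q\times Q$ acts transitively on $\Delta$ via $(q_1,q_2):g\mapsto q_1gq_2^{-1}$). Writing $\Lambda_1,\dots,\Lambda_b$ for the right cosets, $\Xi_i:=\iota(\Lambda_i)$ for the left cosets and $C_{ij}:=\Lambda_i\cap\Xi_j$ for the $b^2$ cells, one has $\iota(C_{ij})=C_{ji}$, so $\I(\Delta)\subseteq\bigcup_iC_{ii}$, each $C_{ii}$ is nonempty and inverse-closed, and each $C_{ij}\cup C_{ji}$ with $i<j$ is inverse-closed with $\mathbf{c}=t$; consequently $\mathbf{c}(\Delta)=\binom{b}{2}t+\sum_{i=1}^{b}\mathbf{c}(C_{ii})$. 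An inverse-closed $S\subseteq\Delta$ is a free choice of a subset of $C_{ij}$ for each $i<j$ and an inverse-closed subset of $C_{ii}$ for each $i$, and if $a_{ij}=|S\cap C_{ij}|$ (a symmetric array) then the evenness condition on $\Delta$ says exactly that the row sums $\sum_ja_{ij}$ are all equal. When $b\ge3$, fixing the contents of every cell except $C_{1b},\dots,C_{b-1,b}$ already fixes $a_{bb}$ and fixes $\sum_{j\ne b}a_{ij}$ for $i<b$; then equality of all $b$ row sums forces their common value (from the row-$b$ equation, since $b-2\ge1$), hence forces $a_{1b},\dots,a_{b-1,b}$. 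So at most one size-vector on these cells is admissible, realisable in at most $2^{(b-1)(t-1)}$ ways by~\eqref{eq:BoundBinom}, while the remaining data ranges over $2^{\binom{b-1}{2}t+\sum_i\mathbf{c}(C_{ii})}$ configurations; hence $|\mathcal{L}_O|\le2^{\binom{b-1}{2}t+\sum_i\mathbf{c}(C_{ii})+(b-1)(t-1)}=2^{\mathbf{c}(\Delta)-(b-1)}\le2^{\mathbf{c}(\Delta)-b/4}$. When $b=2$ the condition reduces to $|S\cap C_{11}|=|S\cap C_{22}|$; writing $\nu_i(a)$ for the number of inverse-closed $a$-subsets of $C_{ii}$ we have $|\mathcal{L}_O|=2^{t}\sum_a\nu_1(a)\nu_2(a)$ while $2^{\mathbf{c}(\Delta)}=2^{t}\big(\sum_a\nu_1(a)\big)\big(\sum_a\nu_2(a)\big)$, so $\sum_a\nu_1(a)\nu_2(a)\le\big(\max_a\nu_1(a)\big)\sum_a\nu_2(a)$ together with Lemma~\ref{lem:nics-givensize} gives $|\mathcal{L}_O|\le2^{\mathbf{c}(\Delta)-1}\le2^{\mathbf{c}(\Delta)-b/4}$.

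The main obstacle is the self-inverse case. Inversion does not preserve the right cosets of $\Delta$, so one is forced onto the finer cell decomposition, on which inversion is transposition of a $b\times b$ array; the crux is then the combinatorial observation that ``equal row sums'' admits at most one completion of the last column of cells, which is exactly what turns the $b-1$ equalities defining the constraint into a genuine saving of $b-1$ in the exponent (and the degenerate $b=2$ subcase, where this observation fails, is rescued by Lemma~\ref{lem:nics-givensize}). The remaining ingredients — additivity of $\mathbf{c}$, the standard cell structure of a double coset, and the final bookkeeping — are routine.
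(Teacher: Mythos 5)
Your argument is correct, and it reaches the required bound by a route that is genuinely different from the paper's in both of its key steps. For a self-inverse double coset $QxQ$ with $b\geq 2$ right cosets, the paper works with the same cells $\Lambda_i\cap\Lambda_j^{-1}$ but extracts from them only the nonemptiness of certain unions $\bigcup_{j\geq i}(\Lambda_i\cap\Lambda_j^{-1})$ after a careful reordering of the right cosets (Lemmas~\ref{Claimfor2k} and~\ref{Claimforarrangement}); it then determines $S\cap\Lambda_1,\dots,S\cap\Lambda_b$ sequentially, saving one bit at each of $\lfloor b/2\rfloor$ steps and hence $(b-1)/2$ per self-inverse double coset (Lemma~\ref{lem:nsM}). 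You instead exploit the full cell structure---all $b^2$ intersections of right cosets with left cosets are nonempty of the common size $|Q|/b$ and are transposed by inversion---together with the observation that equal row sums of a symmetric array determine the last column once everything else is fixed; this yields the stronger saving $b-1$, with the degenerate case $b=2$ rescued by Lemma~\ref{lem:nics-givensize} exactly as in the paper. (Your cell structure would in fact render Lemmas~\ref{Claimfor2k} and~\ref{Claimforarrangement} unnecessary, since it already shows that every single cell $\Lambda_i\cap\Lambda_j^{-1}$ is nonempty; the $Q\times Q$-action argument you sketch does justify both nonemptiness and equality of cell sizes, because $Q\times Q$ permutes the cells transitively.) Your treatment of a non-self-inverse pair via $\sum_k\binom{|Q|}{k}^b\leq 2^{b|Q|-b+1}$ gives the same saving as Lemma~\ref{lem:nsN}. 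Finally, where the paper converts the per-double-coset savings into $\tfrac18|R\,{:}\,Q|$ by citing the bound $\ell\leq\tfrac34|R\,{:}\,Q|$ on the number of double cosets from \cite[Lemma 3.1]{Morris-S2021}, you use the elementary observation that the double cosets consisting of a single right coset are precisely the right cosets inside $\N_R(Q)$, of which there are at most $\tfrac12|R\,{:}\,Q|$ by non-normality; this is more self-contained and in fact implies the $\tfrac34$ bound. Both routes land on the same constant $\tfrac18$.
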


In what follows we are going to provide a series of lemmas, which will lead to a proof of Proposition~\ref{prop:ns-nonnorm} at the end of this subsection.



\begin{lemma}\label{Claimfor2k}
Let $R$ be a group and $QxQ$ an inverse-closed double coset of a subgroup $Q$ in $R$ with $|QxQ|=b|Q|$, where $b\geq2$. For any ordering $(\Phi_{1},\Phi_{2},\ldots,\Phi_{b})$ of the $b$ right $Q$-cosets in $QxQ$, we have
\[
\bigcup_{i=k+1}^{2k}\bigcup_{j=k+1}^{b}(\Phi_{i}\cap\Phi_{j}^{-1})\neq\emptyset\ \textup{ for all }\,k\in\{1,\ldots,\lfloor b/2\rfloor\}.
\]
\end{lemma}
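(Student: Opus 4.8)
The plan is to reduce the claim to the observation that a single, easily controlled term in the displayed double union is already nonempty, and then to do a small amount of index bookkeeping.

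First I would isolate the key sub-claim: for any right $Q$-coset $\Phi$ contained in an inverse-closed double coset $QxQ$, the intersection $\Phi\cap\Phi^{-1}$ is nonempty. To prove this I would write $\Phi=Qy$ with $y\in QxQ$; since $QxQ=(QxQ)^{-1}$ and $y\in QxQ$, we also have $y^{-1}\in QxQ$, and because a double coset is the full double coset of each of its elements this yields $QyQ=QxQ=Qy^{-1}Q$. In particular $y\in Qy^{-1}Q$, so $y=ay^{-1}b$ for some $a,b\in Q$; rearranging gives $a^{-1}y=y^{-1}b$, an element lying simultaneously in $Qy=\Phi$ (as $a^{-1}\in Q$) and in $y^{-1}Q=(Qy)^{-1}=\Phi^{-1}$ (as $b\in Q$). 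Hence $\Phi\cap\Phi^{-1}\neq\emptyset$. The same computation, starting from $y_i\in QxQ=Qy_j^{-1}Q$, would in fact show $\Phi_i\cap\Phi_j^{-1}\neq\emptyset$ for every pair $i,j$, but only the diagonal case $i=j$ is needed below.

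Next I would check the indices. For $k\in\{1,\dots,\lfloor b/2\rfloor\}$ we have $2k\leq b$, hence $k+1\leq 2k\leq b$; thus $\Phi_{k+1}$ occurs among the cosets indexed by $i\in\{k+1,\dots,2k\}$ and also among those indexed by $j\in\{k+1,\dots,b\}$. Applying the sub-claim with $\Phi=\Phi_{k+1}$, the nonempty set $\Phi_{k+1}\cap\Phi_{k+1}^{-1}$ is contained in $\bigcup_{i=k+1}^{2k}\bigcup_{j=k+1}^{b}(\Phi_i\cap\Phi_j^{-1})$, which is therefore nonempty; since the ordering of the $b$ cosets was arbitrary, this completes the proof.

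I do not expect a genuine obstacle here: the only substantive point is the sub-claim, which is a one-line manipulation of cosets using inverse-closedness of $QxQ$, and the rest is routine. The one mild subtlety to watch is that the index range $\{k+1,\dots,2k\}$ must consist of genuine cosets, i.e. that $2k\leq b$, which is precisely why the hypothesis restricts $k$ to $\{1,\dots,\lfloor b/2\rfloor\}$ and why the condition $b\geq 2$ is assumed.
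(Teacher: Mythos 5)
Your proof is correct, and it takes a genuinely different and substantially shorter route than the paper. The paper argues by contradiction: assuming the displayed union is empty, it shows $\Phi_{k+1}\cup\cdots\cup\Phi_{2k}$ would coincide with $\Phi_1^{-1}\cup\cdots\cup\Phi_k^{-1}$ and then derives, via a covering argument with left cosets, that $QxQ$ would be contained in $k$ left $Q$-cosets, contradicting $2k\le b$. You instead observe that a single diagonal term of the union, $\Phi_{k+1}\cap\Phi_{k+1}^{-1}$, is already nonempty; your sub-claim is a correct instance of the standard fact that within a double coset every right coset meets every left coset (if $u=a_1xb_1$ and $w=a_2xb_2$ with $a_i,b_i\in Q$, then $a_2xb_1\in Qu\cap wQ$), combined with the observation that inverse-closedness of $QxQ$ puts $y^{-1}$ back in $QxQ=QyQ$. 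Your index bookkeeping ($k+1\le 2k$ from $k\ge1$, and $k+1\le b$ from $k\le\lfloor b/2\rfloor$ and $b\ge2$) is also right. It is worth noting that your parenthetical remark is the real content: the same computation shows $\Phi_i\cap\Phi_j^{-1}\neq\emptyset$ for \emph{every} pair $i,j$, which is strictly stronger than Lemma~\ref{Claimfor2k} and would make the reordering construction of Lemma~\ref{Claimforarrangement} unnecessary (any ordering satisfies its conclusion, for all $i$ up to $b$ rather than $\lfloor b/2\rfloor+1$). Carried through Lemma~\ref{lem:nsM}, this would improve the saving in the exponent from $\lfloor b/2\rfloor$ to $b-1$ and hence sharpen the constant $\tfrac{1}{8}$ in Proposition~\ref{prop:ns-nonnorm} to $\tfrac{1}{4}$; the paper's weaker bound is still correct, just not optimal by this route.
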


\begin{proof}
Since $QxQ$ is inverse-closed and $\{\Phi_{1},\Phi_{2},\ldots,\Phi_{b}\}$ forms a partition of $QxQ$, we have
\[
QxQ=\Phi_{1}^{-1}\cup\cdots\cup\Phi_{b}^{-1}.
\]
Suppose for a contradiction that $\cup_{i=k+1}^{2k}\cup_{j=k+1}^{b}(\Phi_{i}\cap\Phi_{j}^{-1})=\emptyset$ for some $k\in\{1,\ldots,\lfloor b/2\rfloor\}$. Hence for each $i\in\{k+1,\ldots,2k\}$, we deduce that $\cup_{j=k+1}^{b}(\Phi_{i}\cap\Phi_{j}^{-1})=\emptyset$ and so
\[
\Phi_i\subseteq QxQ\setminus (\Phi_{k+1}^{-1}\cup\cdots\cup\Phi_{b}^{-1})=\Phi_{1}^{-1}\cup\cdots\cup\Phi_{k}^{-1},
\]
which implies that
\[
\Phi_{k+1}\cup\cdots\cup\Phi_{2k}\subseteq\Phi_{1}^{-1}\cup\cdots\cup\Phi_{k}^{-1}.
\]
Combining this with
\[
|\Phi_{k+1}\cup\cdots\cup\Phi_{2k}|=k|Q|=|\Phi_{1}^{-1}\cup\cdots\cup\Phi_{k}^{-1}|,
\]
we obtain
\[
\Phi_{k+1}\cup\cdots\cup\Phi_{2k}=\Phi_{1}^{-1}\cup\cdots\cup\Phi_{k}^{-1}.
\]
Since $\Phi_{k+1},\ldots,\Phi_{2k}$ are right $Q$-cosets, it follows that
\[
Q(\Phi_{1}^{-1}\cup\cdots\cup\Phi_{k}^{-1})=Q(\Phi_{k+1}\cup\cdots\cup\Phi_{2k})=\Phi_{k+1}\cup\cdots\cup\Phi_{2k}=\Phi_{1}^{-1}\cup\cdots\cup\Phi_{k}^{-1}.
\]
Write $\Phi_{i}=Qy_i$ for each $i\in\{1,\ldots,k\}$.
Note that
\[
\{y_1^{-1},\ldots,y_k^{-1}\}\subseteq\Phi_{1}^{-1}\cup\cdots\cup\Phi_{k}^{-1}.
\]
Then we derive that
\begin{align*}
Qy_1^{-1}\cup\cdots\cup Qy_k^{-1}\subseteq Q(\Phi_{1}^{-1}\cup\cdots\cup\Phi_{k}^{-1})=\Phi_{1}^{-1}\cup\cdots\cup\Phi_{k}^{-1}=y_1^{-1}Q\cup \cdots\cup y_k^{-1}Q.
\end{align*}
As a consequence,
\[
QxQ=Qy_1^{-1}Q\cup\cdots\cup Qy_k^{-1}Q=(Qy_1^{-1}\cup \cdots\cup Qy_k^{-1})Q\subseteq y_1^{-1}Q\cup \cdots\cup y_k^{-1}Q.
\]
This implies that $b|Q|=|QxQ|\leq k|Q|$, contradicting to the condition that $2k\leq b$.
\end{proof}

\begin{lemma}\label{Claimforarrangement}
Let $R$ be a group and $QxQ$ an inverse-closed double coset of a subgroup $Q$ in $R$ with $|QxQ|=b|Q|$, where $b\geq2$.
There exists an ordering $(\Lambda_{1},\Lambda_{2},\ldots,\Lambda_{b})$ of the $b$ right $Q$-cosets in $QxQ$ such that
\[
\bigcup_{j=i}^{b}(\Lambda_{i}\cap\Lambda_{j}^{-1})\neq\emptyset\ \textup{ for all }\,i\in\{2,\ldots,\lfloor b/2\rfloor+1\}.
\]
\end{lemma}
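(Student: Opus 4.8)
The plan is to build the required ordering greedily, one coset at a time, controlling almost all steps by a crude cardinality count and calling on Lemma~\ref{Claimfor2k} only to settle a single tight boundary case.

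The first step is a reformulation. Since $QxQ$ is inverse-closed and its $b$ right $Q$-cosets partition it, the sets $\Lambda_1^{-1},\dots,\Lambda_b^{-1}$ also partition $QxQ$ (their union is $(QxQ)^{-1}=QxQ$ and their total size is $b|Q|=|QxQ|$). Hence for each $i$ we have $\bigcup_{j=i}^b\Lambda_j^{-1}=QxQ\setminus(\Lambda_1^{-1}\cup\cdots\cup\Lambda_{i-1}^{-1})$, and therefore the target condition $\bigcup_{j=i}^b(\Lambda_i\cap\Lambda_j^{-1})\neq\emptyset$ is equivalent to $\Lambda_i\not\subseteq\Lambda_1^{-1}\cup\cdots\cup\Lambda_{i-1}^{-1}$ --- a condition that depends only on $\Lambda_1,\dots,\Lambda_i$.

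Next I would construct $\Lambda_1,\Lambda_2,\dots$ inductively. Choose $\Lambda_1$ arbitrarily; having chosen $\Lambda_1,\dots,\Lambda_{i-1}$ for some $i\in\{2,\dots,\lfloor b/2\rfloor+1\}$, I want to pick an as-yet-unused right $Q$-coset $\Lambda_i$ with $\Lambda_i\not\subseteq\Lambda_1^{-1}\cup\cdots\cup\Lambda_{i-1}^{-1}$. If no such choice existed, all $b-i+1$ unused cosets would be contained in $\Lambda_1^{-1}\cup\cdots\cup\Lambda_{i-1}^{-1}$, forcing $(b-i+1)|Q|\leq(i-1)|Q|$, i.e.\ $i\geq(b+2)/2$; combined with $i\leq\lfloor b/2\rfloor+1$ this leaves only the possibility that $b$ is even and $i=b/2+1$. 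In every other step the count already gives the contradiction, so the greedy choice succeeds there.

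It remains to dispose of the case $b$ even, $i=b/2+1$, and this is the only place where real structural input is needed --- the pure counting is exactly tight, with no slack. Here the cardinality inequality above is an equality, so $QxQ\setminus(\Lambda_1\cup\cdots\cup\Lambda_{b/2})=\Lambda_1^{-1}\cup\cdots\cup\Lambda_{b/2}^{-1}$; inverting, $QxQ\setminus(\Lambda_1^{-1}\cup\cdots\cup\Lambda_{b/2}^{-1})=\Lambda_1\cup\cdots\cup\Lambda_{b/2}$. Completing $\Lambda_1,\dots,\Lambda_{b/2}$ to any ordering $(\Lambda_1,\dots,\Lambda_b)$ of all $b$ cosets, each $\Lambda_{i'}$ with $i'>b/2$ then lies in $\Lambda_1^{-1}\cup\cdots\cup\Lambda_{b/2}^{-1}$ while each $\Lambda_j^{-1}$ with $j>b/2$ lies in $\Lambda_1\cup\cdots\cup\Lambda_{b/2}$, so $\Lambda_{i'}\cap\Lambda_j^{-1}=\emptyset$ for all $i',j\in\{b/2+1,\dots,b\}$, contradicting Lemma~\ref{Claimfor2k} applied with $k=b/2$ (which is legitimate since $1\leq b/2=\lfloor b/2\rfloor$). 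Thus the greedy step always works. After $\lfloor b/2\rfloor+1$ steps, extend arbitrarily to a full ordering; because the condition at position $i$ depends only on $\Lambda_1,\dots,\Lambda_i$, the extension does not affect it, and the ordering so obtained has the desired property. I expect the boundary case just described to be the one genuine obstacle; everything else is bookkeeping around the identity that inverse-closedness makes $\{\Lambda_j^{-1}\}$ a second partition of $QxQ$.
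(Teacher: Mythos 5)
Your proof is correct, and it takes a genuinely different (and in places more economical) route than the paper's. Your key move is the reformulation: since $\{\Lambda_1^{-1},\dots,\Lambda_b^{-1}\}$ is a second partition of $QxQ$, the condition $\bigcup_{j=i}^{b}(\Lambda_i\cap\Lambda_j^{-1})\neq\emptyset$ is equivalent to $\Lambda_i\not\subseteq\Lambda_1^{-1}\cup\cdots\cup\Lambda_{i-1}^{-1}$, which depends only on the first $i$ cosets chosen; this licenses a direct greedy construction in which a bare cardinality count ($(b-i+1)|Q|\leq(i-1)|Q|$ forces $i\geq(b+2)/2$) settles every step except the single tight one with $b$ even and $i=b/2+1$, and there your equality analysis correctly reduces the failure to $\Lambda_{i'}\cap\Lambda_j^{-1}=\emptyset$ for all $i',j>b/2$, contradicting Lemma~\ref{Claimfor2k} with $k=b/2$. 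The paper instead starts from an arbitrary ordering and repairs it by swaps, invoking Lemma~\ref{Claimfor2k} at every level $k=1,\dots,\lfloor b/2\rfloor$ to locate, within the window $\{\ell+1,\dots,2\ell\}$, a coset to move into position $\ell+1$, and then checking that the swap preserves the already-established conditions (the point being that $\bigcup_{j=i}^b\Phi_j^{-1}$ is invariant under permuting positions $\geq i$ --- essentially the same invariance your reformulation makes explicit). Your version buys a cleaner logical structure and shows that only the single instance $k=\lfloor b/2\rfloor$ of Lemma~\ref{Claimfor2k} (and only when $b$ is even) carries real structural content, the rest being counting; the paper's version avoids any case split at the cost of using the full strength of Lemma~\ref{Claimfor2k} throughout. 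Both are complete proofs of the statement.
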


\begin{proof}
Take any ordering $(\Phi_{1},\Phi_{2},\ldots,\Phi_{b})$ of the $b$ right $Q$-cosets in $QxQ$.  Starting from this ordering, we construct in the following a series of orderings inductively, which will finally lead to a desired ordering $(\Lambda_{1},\Lambda_{2},\ldots,\Lambda_{b})$.
It is known by Lemma~\ref{Claimfor2k} (taking $k=1$ there) that the inequality
\[
\bigcup_{j=i}^{b}(\Phi_{i}\cap\Phi_{j}^{-1})\neq\emptyset
\]
holds for $i=2$.

Suppose that, for some $\ell\in\{2,\ldots,\lfloor b/2\rfloor\}$, there is an ordering $(\Phi_{\ell,1},\Phi_{\ell,2},\ldots,\Phi_{\ell,b})$ of the $b$ right $Q$-cosets $\Phi_{1},\Phi_{2},\ldots,\Phi_{b}$ such that
\[
\bigcup_{j=i}^{b}(\Phi_{\ell,i}\cap\Phi_{\ell,j}^{-1})\neq\emptyset\ \text{ for all }\,i\in\{2,\ldots,\ell\}.
\]
(Note that this supposition holds for $\ell=2$ by letting $\Phi_{2,j}=\Phi_{j}$ for $j\in\{1,\ldots,b\}$.)
Applying Lemma~\ref{Claimfor2k} to the ordering $(\Phi_{\ell,1},\Phi_{\ell,2},\ldots,\Phi_{\ell,b})$, we have
\[
\bigcup_{i=\ell+1}^{2\ell}\bigcup_{j=\ell+1}^{b}(\Phi_{\ell,i}\cap\Phi_{\ell,j}^{-1})\neq\emptyset.
\]
Take the smallest $s\in\{\ell+1,\ldots,2\ell\}$ such that
\begin{equation}\label{smallests}
\bigcup_{j=\ell+1}^{b}(\Phi_{\ell,s}\cap\Phi_{\ell,j}^{-1})\neq\emptyset.
\end{equation}
If $s=\ell+1$, then let $(\Phi_{\ell+1,1},\Phi_{\ell+1,2},\ldots,\Phi_{\ell+1,b})=(\Phi_{\ell,1},\Phi_{\ell,2},\ldots,\Phi_{\ell,b})$. If $s>\ell+1$, then let
\begin{equation*}
\Phi_{\ell+1,i}=
\begin{cases}
\Phi_{\ell,i} &\text{ if }i\in\{1,2,\ldots,b\}\setminus \{\ell+1,s\}, \\
\Phi_{\ell,s} &\text{ if }i=\ell+1,\\
\Phi_{\ell,\ell+1} &\text{ if }i=s.
\end{cases}
\end{equation*}
We prove in the next paragraph that
\begin{equation}\label{phi'}
\bigcup_{j=i}^{b}(\Phi_{\ell+1,i}\cap\Phi_{\ell+1,j}^{-1})\neq\emptyset\ \text{ for all }\,i\in\{2,\ldots,\ell+1\}.
\end{equation}

For each $i\in\{2,\ldots,\ell+1\}$, noticing that $(\Phi_{\ell+1,i},\ldots,\Phi_{\ell+1,b})$ is an reordering of $(\Phi_{\ell,i},\ldots,\Phi_{\ell,b})$, we have $\cup_{j=i}^{b}\Phi_{\ell+1,j}^{-1}=\cup_{j=i}^{b}\Phi_{\ell,j}^{-1}$, which yields that
\begin{equation}\label{gamma'}
\bigcup_{j=i}^{b}(\Phi_{\ell+1,i}\cap\Phi_{\ell+1,j}^{-1})=\Phi_{\ell+1,i}\cap\bigcup_{j=i}^{b}\Phi_{\ell+1,j}^{-1}=\Phi_{\ell+1,i}\cap\bigcup_{j=i}^{b}\Phi_{\ell,j}^{-1}=\bigcup_{j=i}^{b}(\Phi_{\ell+1,i}\cap\Phi_{\ell,j}^{-1}).
\end{equation}
For $i\in\{2,\ldots,\ell\}$, since $\Phi_{\ell+1,i}=\Phi_{\ell,i}$, it follows that
\[
\bigcup_{j=i}^{b}(\Phi_{\ell+1,i}\cap\Phi_{\ell+1,j}^{-1})=\bigcup_{j=i}^{b}(\Phi_{\ell,i}\cap\Phi_{\ell,j}^{-1})\neq\emptyset.
\]
As for $i=\ell+1$, according to~\eqref{gamma'} and the relation $\Phi_{\ell+1,\ell+1}=\Phi_{\ell,s}$, we have
\[
\bigcup_{j=\ell+1}^{b}(\Phi_{\ell+1,\ell+1}\cap\Phi_{\ell+1,j}^{-1})=\bigcup_{j=\ell+1}^{b}(\Phi_{\ell,s}\cap\Phi_{\ell,j}^{-1}).
\]
This together with $\cup_{j=\ell+1}^{b}(\Phi_{\ell,s}\cap\Phi_{\ell,j}^{-1})\neq\emptyset$ in~\eqref{smallests} implies that $\cup_{j=\ell+1}^{b}(\Phi_{\ell+1,\ell+1}\cap\Phi_{\ell+1,j}^{-1})\neq\emptyset$.

Now we have shown that the ordering $(\Phi_{\ell+1,1},\Phi_{\ell+1,2},\ldots,\Phi_{\ell+1,b})$ constructed from the ordering $(\Phi_{\ell,1},\Phi_{\ell,2},\ldots,\Phi_{\ell,b})$ satisfies~\eqref{phi'}. By induction, the ordering
\[
(\Lambda_{1},\Lambda_{2},\ldots,\Lambda_{b}):=(\Phi_{\lfloor b/2\rfloor+1,1},\Phi_{\lfloor b/2\rfloor+1,2}\ldots,\Phi_{\lfloor b/2\rfloor+1,b})
\]
satisfies the requirement of the lemma.
\end{proof}

\begin{lemma}\label{lem:nsM}
Let $R$ be a group, let $QxQ$ be an inverse-closed double coset of a subgroup $Q$ in $R$ with $|QxQ|=b|Q|$, and let
\[
\mathcal{M}=\{S\subseteq QxQ: S=S^{-1},\, S\text{ intersects $QxQ$ evenly}\}.
\]
We have $|\mathcal{M}|\leq2^{\mathbf{c}(QxQ)-\frac{1}{2}b+\frac{1}{2}}$.
\end{lemma}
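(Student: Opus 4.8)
The plan is to build an injection from $\mathcal{M}$ into the collection of subsets of a set with $\mathbf{c}(QxQ)-\lfloor b/2\rfloor$ elements, and then use $\lfloor b/2\rfloor\geq(b-1)/2$. If $b=1$, then $QxQ$ is a single right $Q$-coset, the condition ``$S$ intersects $QxQ$ evenly'' is vacuous, and $\mathcal{M}$ is the set of all inverse-closed subsets of $QxQ$, so $|\mathcal{M}|=2^{\mathbf{c}(QxQ)}=2^{\mathbf{c}(QxQ)-b/2+1/2}$ and we are done. Assume $b\geq2$. By Lemma~\ref{Claimforarrangement} we may fix an ordering $(\Lambda_1,\dots,\Lambda_b)$ of the $b$ right $Q$-cosets in $QxQ$ with $\bigcup_{j=i}^{b}(\Lambda_i\cap\Lambda_j^{-1})\neq\emptyset$ for every $i\in\{2,\dots,\lfloor b/2\rfloor+1\}$. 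Let $\iota$ be the inversion map on $QxQ$; its orbits are the sets $\{z,z^{-1}\}$, there are exactly $\mathbf{c}(QxQ)$ of them (the number of inverse-closed subsets of $QxQ$ being $2^{\mathbf{c}(QxQ)}$), and an inverse-closed subset of $QxQ$ is the same thing as a union of $\iota$-orbits. Say an $\iota$-orbit \emph{meets} a coset $\Lambda_k$ if it contains a point of $\Lambda_k$; since an orbit has at most two elements, it meets at most two of $\Lambda_1,\dots,\Lambda_b$. For each $i\in\{2,\dots,\lfloor b/2\rfloor+1\}$, the displayed inequality yields some $z_i\in\Lambda_i$ with $z_i^{-1}\in\Lambda_j$ for some $j\geq i$; let $O_i$ be the $\iota$-orbit of $z_i$, so $O_i\subseteq\Lambda_i\cup\Lambda_j$ with $j\geq i$. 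Thus $O_i$ meets $\Lambda_i$ but meets no $\Lambda_k$ with $k<i$, and consequently $O_2,\dots,O_{\lfloor b/2\rfloor+1}$ are pairwise distinct.

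Next I would define $\Phi\colon\mathcal{M}\to\mathcal{P}(\mathcal{O})$, where $\mathcal{O}$ is the set of $\iota$-orbits on $QxQ$ other than $O_2,\dots,O_{\lfloor b/2\rfloor+1}$, by letting $\Phi(S)$ be the set of orbits in $\mathcal{O}$ contained in $S$, and show $\Phi$ is injective by reconstructing $S$ from $\Phi(S)$. Since no $O_i$ meets $\Lambda_1$, the data $\Phi(S)$ already determines $S\cap\Lambda_1$, hence the common value $m:=|S\cap\Lambda_1|=\dots=|S\cap\Lambda_b|$. One then recovers, for $i=2,3,\dots,\lfloor b/2\rfloor+1$ in turn, whether $O_i\subseteq S$: counting the points of $S$ in $\Lambda_i$ orbit by orbit gives
\[
m=|S\cap\Lambda_i|=|O_i\cap\Lambda_i|\cdot\varepsilon_i+\sigma_i,
\]
where $\varepsilon_i\in\{0,1\}$ records whether $O_i\subseteq S$ and $\sigma_i$ is the total contribution of the remaining orbits meeting $\Lambda_i$. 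Every such remaining orbit $O\neq O_i$ is either in $\mathcal{O}$, so known from $\Phi(S)$, or equals some $O_{i'}$ with $i'<i$ — because an orbit $O_{i'}$ meeting $\Lambda_i$ with $i'\neq i$ forces $\Lambda_i$ to be the ``second'' coset of $O_{i'}$, whose index is $\geq i'$, giving $i'<i$ — and was therefore recovered at an earlier step; its contribution $|O\cap\Lambda_i|$ depends only on $O$. Hence $\sigma_i$ is known, and since $|O_i\cap\Lambda_i|\geq1$ the displayed equation pins down $\varepsilon_i$ uniquely. So $S$ is reconstructed, $\Phi$ is injective, and
\[
|\mathcal{M}|\leq2^{|\mathcal{O}|}=2^{\mathbf{c}(QxQ)-\lfloor b/2\rfloor}\leq2^{\mathbf{c}(QxQ)-b/2+1/2},
\]
the final step because $\lfloor b/2\rfloor\geq(b-1)/2$.

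The step I expect to be the main obstacle is this reconstruction argument: one must check that the chosen orbits $O_2,\dots,O_{\lfloor b/2\rfloor+1}$ form a ``staircase'' — each meeting a coset of strictly larger index than all cosets met by the earlier ones — so that their indicators can be peeled off one at a time without circular dependence, and one must confirm the staircase has length $\lfloor b/2\rfloor$, which is precisely the information Lemma~\ref{Claimforarrangement} was built to supply and which is just large enough to produce the stated saving.
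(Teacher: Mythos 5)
Your proof is correct. It rests on the same skeleton as the paper's -- the ordering supplied by Lemma~\ref{Claimforarrangement} and, for each $i\in\{2,\dots,\lfloor b/2\rfloor+1\}$, the fact that the constraint $|S\cap\Lambda_i|=|S\cap\Lambda_1|$ combined with $\bigcup_{j=i}^b(\Lambda_i\cap\Lambda_j^{-1})\neq\emptyset$ saves one bit -- but the mechanism for extracting the saving is genuinely different. The paper determines $S\cap\Lambda_1,\dots,S\cap\Lambda_b$ sequentially and bounds the number of choices at each step, invoking Lemma~\ref{lem:nics-givensize} (inverse-closed subsets of a prescribed size) and the binomial bound~\eqref{eq:BoundBinom} to gain a factor $2^{-1}$ at each of the $\lfloor b/2\rfloor$ relevant indices; you instead encode $S$ by the $\iota$-orbits it contains and exhibit an explicit injection into $\mathcal{P}(\mathcal{O})$, showing that the membership of the $\lfloor b/2\rfloor$ designated orbits $O_2,\dots,O_{\lfloor b/2\rfloor+1}$ is forced and can be peeled off along the staircase. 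Your route is self-contained modulo Lemma~\ref{Claimforarrangement} (no appeal to Lemma~\ref{lem:nics-givensize} or to~\eqref{eq:BoundBinom}) and delivers the slightly sharper bound $2^{\mathbf{c}(QxQ)-\lfloor b/2\rfloor}$ directly, which is exactly the intermediate quantity the paper obtains before relaxing $\lfloor b/2\rfloor$ to $(b-1)/2$. The reconstruction step you flag as the main obstacle does go through: an orbit $O_{i'}$ with $i'\neq i$ can meet $\Lambda_i$ only via its ``second'' coset, whose index is at least $i'$, so $i'<i$ and there is no circular dependence.
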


\begin{proof}
If $b=1$, then the set $\mathcal{M}$ is exactly the set of inverse-closed subsets of $QxQ$. In this case, we have $|\mathcal{M}|=2^{\mathbf{c}(QxQ)}$, which meets the upper bound. For the rest of the proof we assume $b\geq2$.

Let $(\Lambda_{1},\Lambda_{2},\ldots,\Lambda_{b})$ be an ordering of the $b$ right $Q$-cosets in $QxQ$. Write
\[
\Lambda_{i,j}=\Lambda_{i}\cap\Lambda_{j}^{-1}\ \text{ for }\,i,j\in\{1,2,\ldots,b\}.
\]
The reader may find Figure~\ref{tab0} helpful.

\vspace{4mm}
\begin{figure}[h]
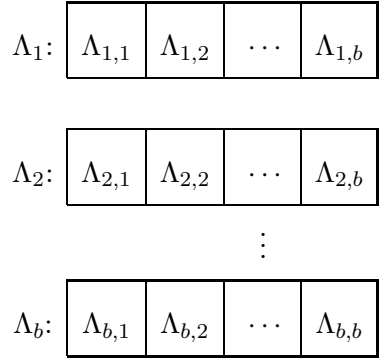

 \renewcommand\arraystretch{2}
 \centering
 \begin{tabular}{ccccc}
 \cline{2-5}
 \multicolumn{1}{c|}{$\Lambda_{1}$:~~~~} & \multicolumn{1}{c|}{$\Lambda_{1,1}$} & \multicolumn{1}{c|}{$\Lambda_{1,2}$} & \multicolumn{1}{c|}{~$\cdots$~} & \multicolumn{1}{c|}{$\Lambda_{1,b}$}
 \\ \cline{2-5}
 \\[-2ex] \cline{2-5}
 \multicolumn{1}{c|}{$\Lambda_{2}$:~~~~} & \multicolumn{1}{c|}{$\Lambda_{2,1}$} & \multicolumn{1}{c|}{$\Lambda_{2,2}$} & \multicolumn{1}{c|}{~~~$\cdots$~~~} & \multicolumn{1}{c|}{$\Lambda_{2,b}$}
 \\ \cline{2-5}
 \multicolumn{1}{c}{} & \multicolumn{1}{c}{} & \multicolumn{1}{c}{}  & \multicolumn{1}{c}{$\vdots$} & \multicolumn{1}{c}{}
 \\ \cline{2-5}
 \multicolumn{1}{c|}{$\Lambda_{b}$:~~~~} & \multicolumn{1}{c|}{$\Lambda_{b,1}$} & \multicolumn{1}{c|}{$\Lambda_{b,2}$} & \multicolumn{1}{c|}{~$\cdots$~} & \multicolumn{1}{c|}{$\Lambda_{b,b}$}
 \\ \cline{2-5}
 \end{tabular}
\vspace{3mm}
 \caption{The $b$ right $Q$-cosets in $QxQ$}
 \label{tab0}
\end{figure}

By Lemma~\ref{Claimforarrangement}, we can assume that
\begin{equation}\label{assumption}
\bigcup_{j=i}^{b}\Lambda_{i,j}\neq\emptyset\ \text{ for all }\,i\in\{2,\ldots,\lfloor b/2\rfloor+1\}.
\end{equation}
It is clear that $\Lambda_{i,j}^{-1}=\Lambda_{j,i}$ for $i,j\in\{1,2,\ldots,b\}$. In particular, $\Lambda_{i,i}$ is inverse-closed.
Since $|\Lambda_{i,j}|=|\Lambda_{i,j}^{-1}|=|\Lambda_{j,i}|$ and every element of order at most $2$ in $QxQ$ lies in some $\Lambda_{i,i}$, one has
\begin{equation}\label{Claimfortotal}
\sum_{i=1}^{b}\mathbf{c}(\Lambda_{i,i})+\sum_{i=1}^{b-1}\sum_{j=i+1}^{b}|\Lambda_{i,j}|=\mathbf{c}(QxQ).
\end{equation}

To determine each $S\in\mathcal{M}$ (and count the number of possibilities for $S$), we determine $S\cap\Lambda_1,S\cap\Lambda_2,\ldots,S\cap\Lambda_b$ in turn. First we determine $S\cap\Lambda_1$. Note that this is further determined by $S\cap\Lambda_{1,1}$ and $S\cap\cup_{j=2}^{b}\Lambda_{1,j}$. We first choose an inverse-closed subset of $\Lambda_{1,1}$ and then choose an arbitrary subset of $\cup_{j=2}^{b}\Lambda_{1,j}$. Thus the number of choices for $S\cap\Lambda_{1}$ is
\begin{equation}\label{clamda1}
2^{\mathbf{c}(\Lambda_{1,1})}\cdot2^{|\cup_{j=2}^{b}\Lambda_{1,j}|}=2^{\mathbf{c}(\Lambda_{1,1})+\sum_{j=2}^{b}|\Lambda_{1,j}|}.
\end{equation}
In the following, we estimate the number of choices for $S\cap\Lambda_{i}$ with $i\geq 2$.
Note that once $S\cap\Lambda_1$ has been determined, the size of $S\cap\Lambda_i$ for every $i\in\{2,\ldots,b\}$ has to be $|S\cap\Lambda_1|$ since we require that $S$ intersects $QxQ$ evenly.

Now assume that $\{S\cap\Lambda_1,\ldots,S\cap\Lambda_{i-1}\}$ has been determined for some $i\in\{2,\ldots,b\}$.
For an inverse-closed subset $S$ of $R$, we have $S\cap\Lambda_{i,j}=S^{-1}\cap\Lambda_{j,i}^{-1}=(S\cap\Lambda_{j,i})^{-1}$ for all $j\in\{1,2,\ldots,b\}$. In particular, for each $j\in\{1,\ldots,i-1\}$, the set $S\cap\Lambda_{i,j}$ is determined as $S\cap\Lambda_{j,i}$ is already determined, see Figure~\ref{tab1}.
Therefore, to determine $S\cap\Lambda_i$, we only need to determine $\cup_{j=i}^{b}(S\cap\Lambda_{i,j})$ such that
\[
\Big|\bigcup_{j=i}^{b}(S\cap\Lambda_{i,j})\Big|=|S\cap\Lambda_{i}|-\Big|\bigcup_{j=1}^{i-1}(S\cap\Lambda_{i,j})\Big|=|S\cap\Lambda_{1}|-\Big|\bigcup_{j=1}^{i-1}(S\cap\Lambda_{j,i})\Big|.
\]
This means that we must choose a subset $\cup_{j=i}^{b}(S\cap\Lambda_{i,j})$ of $\cup_{j=i}^{b}\Lambda_{i,j}$ of determined size.
We estimate the number of such choices in the following two paragraphs, according to $2\leq i\leq\lfloor b/2\rfloor+1$ or $i>\lfloor b/2\rfloor+1$, respectively.
\vspace{4mm}
\begin{figure}[h]
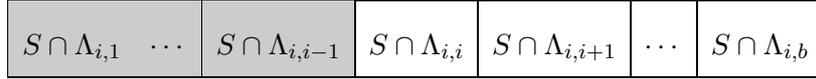

 \renewcommand\arraystretch{2}
 \centering
 \begin{tabular}{ccccccc}
 \hline
 \multicolumn{1}{|>{\columncolor[gray]{0.8}}c|}{$S\cap\Lambda_{i,1}$} & \multicolumn{1}{>{\columncolor[gray]{0.8}}c|}{$\cdots$} & \multicolumn{1}{>{\columncolor[gray]{0.8}}c|}{$S\cap\Lambda_{i,i-1}$} & \multicolumn{1}{c|}{$S\cap\Lambda_{i,i}$} & \multicolumn{1}{c|}{$S\cap\Lambda_{i,i+1}$} & \multicolumn{1}{c|}{$\cdots$} & \multicolumn{1}{c|}{$S\cap\Lambda_{i,b}$}
 \\
 \hline
 \end{tabular}
 \vspace{3mm}
 \caption{$S\cap\Lambda_i$}
 \label{tab1}
\end{figure}

First assume that $2\leq i\leq\lfloor b/2\rfloor+1$. Then $\cup_{j=i}^{b}\Lambda_{i,j}\neq\emptyset$ by~\eqref{assumption}, and we are going to determine $\cup_{j=i}^{b}(S\cap\Lambda_{i,j})$ in the two cases $\cup_{j=i+1}^{b}\Lambda_{i,j}=\emptyset$ and $\cup_{j=i+1}^{b}\Lambda_{i,j}\neq\emptyset$, respectively.
If $\cup_{j=i+1}^{b}\Lambda_{i,j}=\emptyset$, then $\Lambda_{i,i}\neq\emptyset$ and we only need to choose an inverse-closed subset of $\Lambda_{i,i}$ of determined size. In this case, by Lemma~\ref{lem:nics-givensize}, the number of choices for $S\cap\Lambda_i$ is at most
\[
2^{\mathbf{c}(\Lambda_{i,i})-1}=2^{\mathbf{c}(\Lambda_{i,i})+\sum_{j=i+1}^{b}|\Lambda_{i,j}|-1}.
\]
If $\cup_{j=i+1}^{b}\Lambda_{i,j}\neq\emptyset$, then we need to first choose an inverse-closed subset of $\Lambda_{i,i}$ (no matter whether $\Lambda_{i,i}$ is empty or not) and then choose a subset of the nonempty set $\cup_{j=i+1}^{b}\Lambda_{i,j}$ of the determined size $|S\cap\Lambda_{1}|-|\cup_{j=1}^{i-1}(S\cap\Lambda_{j,i})|-|S\cap\Lambda_{i,i}|$. In this case, by~\eqref{eq:BoundBinom}, the number of choices for $S\cap\Lambda_i$ is at most
\[
2^{\mathbf{c}(\Lambda_{i,i})}\cdot2^{|\cup_{j=i+1}^{b}\Lambda_{i,j}|-1}=2^{\mathbf{c}(\Lambda_{i,i})+\sum_{j=i+1}^{b}|\Lambda_{i,j}|-1}.
\]
In either case, we have at most
\begin{equation}\label{clamda2tohalf}
2^{\mathbf{c}(\Lambda_{i,i})+\sum_{j=i+1}^{b}|\Lambda_{i,j}|-1}
\end{equation}
choices for $S\cap\Lambda_i$.

Next assume that $i>\lfloor b/2\rfloor+1$. To determine $\cup_{j=i}^{b}(S\cap\Lambda_{i,j})$, we need to first choose an inverse-closed subset of $\Lambda_{i,i}$ and then choose a subset of $\cup_{j=i+1}^{b}\Lambda_{i,j}$. Hence the number of choices for $S\cap\Lambda_{i}$ is at most
\begin{equation}\label{clamdahalftob}
2^{\mathbf{c}(\Lambda_{i,i})+\sum_{j=i+1}^{b}|\Lambda_{i,j}|}.
\end{equation}

Now combining the bounds in~\eqref{clamda1}, \eqref{clamda2tohalf} and~\eqref{clamdahalftob} for $i\in\{1,2,\ldots,b\}$, we obtain
\begin{align*}
|\mathcal{M}|&\leq2^{\mathbf{c}(\Lambda_{1,1})+\sum_{j=2}^{b}|\Lambda_{1,j}|}\cdot\prod_{i=2}^{\lfloor b/2\rfloor+1}2^{\mathbf{c}(\Lambda_{i,i})+\sum_{j=i+1}^{b}|\Lambda_{i,j}|-1}\cdot\prod_{i=\lfloor b/2\rfloor+2}^b2^{\mathbf{c}(\Lambda_{i,i})+\sum_{j=i+1}^{b}|\Lambda_{i,j}|}\\
&=2^{\sum_{i=1}^{b}\mathbf{c}(\Lambda_{i,i})+\sum_{i=1}^{b}\sum_{j=i+1}^{b}|\Lambda_{i,j}|-\lfloor b/2\rfloor}\\
&\leq2^{\sum_{i=1}^{b}\mathbf{c}(\Lambda_{i,i})+\sum_{i=1}^{b}\sum_{j=i+1}^{b}|\Lambda_{i,j}|-\frac{1}{2}b+\frac{1}{2}}.
\end{align*}
This together with~\eqref{Claimfortotal} yields that $|\mathcal{M}|\leq2^{\mathbf{c}(QxQ)-\frac{1}{2}b+\frac{1}{2}}$ as desired.
\end{proof}

\begin{lemma}\label{lem:nsN}
Let $R$ be a group, let $QxQ$ be a double coset of a subgroup $Q$ in $R$ with $|QxQ|=b|Q|$, and let
\[
\mathcal{N}=\{S\subseteq QxQ: S\text{ intersects $QxQ$ evenly}\}.
\]
We have $|\mathcal{N}|\leq2^{|QxQ|-b+1}$.
\end{lemma}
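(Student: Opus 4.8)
The plan is to compute $|\mathcal{N}|$ exactly and then apply the elementary estimate~\eqref{eq:BoundBinom}. Write $\Phi_1,\dots,\Phi_b$ for the $b$ right $Q$-cosets whose union is $QxQ$, each of cardinality $q:=|Q|$. Any subset $S\subseteq QxQ$ is the disjoint union of its pieces $S\cap\Phi_1,\dots,S\cap\Phi_b$, and by definition $S$ intersects $QxQ$ evenly exactly when these $b$ pieces share a common size $k$ for some $k\in\{0,1,\dots,q\}$. For a fixed $k$ the number of choices for each $S\cap\Phi_i$ is $\binom{q}{k}$, and the choices for distinct $i$ are independent, so there are precisely $\binom{q}{k}^b$ such subsets. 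Summing over $k$ gives
\[
|\mathcal{N}|=\sum_{k=0}^{q}\binom{q}{k}^{b}.
\]
(Note that, in contrast to $\mathcal{M}$ in Lemma~\ref{lem:nsM}, no inverse-closedness is imposed here, which is why this count is cruder.)

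To bound the sum I would peel off a single binomial factor and estimate the rest uniformly: for every $k$,
\[
\binom{q}{k}^{b}=\binom{q}{k}\cdot\binom{q}{k}^{b-1}\leq\binom{q}{k}\cdot\bigl(2^{q-1}\bigr)^{b-1},
\]
using $b\geq1$ and the bound $\binom{q}{k}\leq 2^{q-1}$ from~\eqref{eq:BoundBinom}. Summing this over $k$ and using $\sum_{k=0}^{q}\binom{q}{k}=2^{q}$ yields
\[
|\mathcal{N}|\leq 2^{(q-1)(b-1)}\cdot 2^{q}=2^{(q-1)(b-1)+q}=2^{bq-b+1},
\]
and since $bq=|QxQ|$ this is exactly the asserted bound $2^{|QxQ|-b+1}$.

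There is no genuine obstacle: the proof is a one-line count followed by the binomial inequality already recorded as~\eqref{eq:BoundBinom}. The only point deserving a moment's attention is the degenerate case $b=1$ (i.e.\ $x\in\N_R(Q)$), in which "intersecting $QxQ$ evenly" is no condition at all, $\mathcal{N}$ is the full power set of $QxQ$, and $|\mathcal{N}|=2^{q}=2^{|QxQ|-b+1}$; one should just make sure the displayed chain of inequalities is written so that it remains valid — and tight — there, which it is, since $2^{(q-1)(b-1)}=1$ when $b=1$.
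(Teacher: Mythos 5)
Your proposal is correct and is essentially the paper's own argument: both fix the intersection with one coset freely ($2^{|Q|}$ choices) and observe that each of the remaining $b-1$ cosets then contributes a subset of prescribed size, bounded by $2^{|Q|-1}$ via~\eqref{eq:BoundBinom}. Writing the exact count $\sum_{k}\binom{|Q|}{k}^{b}$ first is a harmless repackaging of the same counting.
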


\begin{proof}
Let the $b$ right $Q$-cosets in $QxQ$ be $\Lambda_{1},\Lambda_{2},\ldots,\Lambda_{b}$. Note that a subset $S\in\mathcal{N}$ is determined by $\{S\cap\Lambda_i: 1\leq i\leq b \}$. To determine $S\cap\Lambda_1$, we may choose an arbitrary subset of $\Lambda_{1}$, which means that we have $2^{|Q|}$ choices for $S\cap\Lambda_1$. Note that once $S\cap\Lambda_{1}$ is determined, the size of $S\cap\Lambda_{i}$ for every $i\in\{2,\ldots,b\}$ is also determined. Thus, by~\eqref{eq:BoundBinom}, there are at most $2^{|Q|-1}$ choices for each of $S\cap\Lambda_2,\dots,S\cap\Lambda_b$. Consequently,
\[
|\mathcal{N}|\leq 2^{|Q|}\cdot(2^{|Q|-1})^{b-1}=2^{b|Q|-b+1}=2^{|QxQ|-b+1},
\]
as desired.
\end{proof}

\begin{lemma}\label{lem:nsL}
Let $R$ be a group, let $Q$ be a subgroup of $R$ with $|Q\backslash R/Q|=\ell$, and let
\[
\mathcal{L}=\{S\subseteq R: S=S^{-1},\, S \text{ intersects $\Delta$ evenly for each } \Delta\in Q\backslash R/Q\}.
\]
We have $|\mathcal{L}|\leq 2^{\mathbf{c}(R)-\frac{1}{2}|R\,{:}\,Q|+\frac{1}{2}\ell}$.
\end{lemma}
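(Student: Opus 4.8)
The plan is to decompose $R$ into the double cosets of $Q$ and estimate, block by block, the number of possibilities for $S\in\mathcal{L}$ on each piece. Since inversion permutes the set of double cosets, they fall into self-inverse double cosets $D_1,\dots,D_p$ (one of which is $Q$ itself) and inverse-pairs $\{E_1,E_1^{-1}\},\dots,\{E_q,E_q^{-1}\}$ with $E_j\neq E_j^{-1}$; thus $\ell=p+2q$. Group these into blocks by taking each $D_i$ as a block and each $E_j\cup E_j^{-1}$ as a block. Each block is inverse-closed, the blocks partition $R$, and an inverse-closed $S$ is determined by, and may be chosen independently on, its intersections with the blocks. Writing $b_i=|D_i|/|Q|$ and $d_j=|E_j|/|Q|$, I would bound $|\mathcal{L}|$ by the product over the blocks of the number of admissible restrictions $S\cap(\text{block})$.

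For a self-inverse double coset $D_i$, the restriction $S\cap D_i$ is an inverse-closed subset of $D_i$ that intersects $D_i$ evenly, so Lemma~\ref{lem:nsM} gives at most $2^{\mathbf{c}(D_i)-\frac12 b_i+\frac12}$ choices. For a pair block $E_j\cup E_j^{-1}$, the restriction $S\cap(E_j\cup E_j^{-1})$ is determined by $S\cap E_j$ via $S\cap E_j^{-1}=(S\cap E_j)^{-1}$ (here $S=S^{-1}$ is used), and $S\cap E_j$ is a subset of $E_j$ intersecting $E_j$ evenly; hence Lemma~\ref{lem:nsN} yields at most $2^{|E_j|-d_j+1}$ choices. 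The key bookkeeping observation is that $E_j\cup E_j^{-1}$ contains no element of order at most $2$ (such an element would lie in a self-inverse double coset) and does not contain the identity, so $\I(E_j\cup E_j^{-1})=\emptyset$ and $\mathbf{c}(E_j\cup E_j^{-1})=|E_j\cup E_j^{-1}|/2=|E_j|$; thus the bound for this block reads $2^{\mathbf{c}(E_j\cup E_j^{-1})-d_j+1}$.

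Multiplying, $|\mathcal{L}|\le 2^{A}$ with $A=\sum_i\bigl(\mathbf{c}(D_i)-\tfrac12 b_i+\tfrac12\bigr)+\sum_j\bigl(\mathbf{c}(E_j\cup E_j^{-1})-d_j+1\bigr)$. Since the blocks partition $R$, additivity of $\mathbf{c}$ over a partition gives $\sum_i\mathbf{c}(D_i)+\sum_j\mathbf{c}(E_j\cup E_j^{-1})=\mathbf{c}(R)$, while $\sum_i b_i+\sum_j 2d_j=|R\,{:}\,Q|$ counts all right $Q$-cosets, so $\sum_i\tfrac12 b_i+\sum_j d_j=\tfrac12|R\,{:}\,Q|$; finally $\sum_i\tfrac12+\sum_j 1=\tfrac12(p+2q)=\tfrac12\ell$. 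Combining the three identities gives $A=\mathbf{c}(R)-\tfrac12|R\,{:}\,Q|+\tfrac12\ell$, which is exactly the claimed bound. The only genuinely delicate point is the pair-block step: one should resist trying to invoke the even-intersection condition on $E_j^{-1}$ with respect to \emph{its own} right-coset partition (which need not be the inverse of the right-coset partition of $E_j$), and instead use only the weaker fact that $S\cap E_j$ intersects $E_j$ evenly, to which Lemma~\ref{lem:nsN} applies directly.
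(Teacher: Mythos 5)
Your proof is correct and follows essentially the same route as the paper: split the double cosets into self-inverse ones and inverse-pairs, apply Lemma~\ref{lem:nsM} to the former and Lemma~\ref{lem:nsN} to one representative of each pair, and combine via the counting identities. The only cosmetic difference is that you package the pair contribution as $\mathbf{c}(E_j\cup E_j^{-1})=|E_j|$ (using $\I(E_j\cup E_j^{-1})=\emptyset$) where the paper just tracks $|B_j|$ directly; the bookkeeping is equivalent.
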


\begin{proof}
We divide $Q\backslash R/Q$ into the following two subsets:
\begin{align*}
  &\mathcal{D}_1=\{\Delta\in Q\backslash R/Q: \Delta=\Delta^{-1}\},\\
  &\mathcal{D}_2=\{\Delta\in Q\backslash R/Q: \Delta\neq\Delta^{-1}\}.
\end{align*}
Write $\mathcal{D}_1=\{A_1,\ldots,A_s\}$. For each $i\in \{1,\ldots,s\}$, let $a_i=|A_i|/|Q|$, that is, the number of right $Q$-cosets in $A_i$.
Write $\mathcal{D}_2=\{B_1,\ldots,B_{2t}\}$ with $B_i^{-1}=B_{i+t}$ for $i\in \{1,\ldots,t\}$. For each $i\in \{1,\ldots,t\}$, let $b_i=|B_i|/|Q|$. It is clear that we have
\begin{equation}\label{t+2m}
s+2t=|Q\backslash R/Q|=\ell,
\end{equation}
\begin{equation}\label{a+2b}
\sum_{i=1}^{s}a_i+2\sum_{i=1}^{t}b_i=|R\,{:}\,Q|,
\end{equation}
\begin{equation}\label{alpha+2beta+i}
\left(\sum_{i=1}^{s}|A_i|+2\sum_{i=1}^{t}|B_i|\right)+\sum_{i=1}^{s}|\I(A_i)|=|R|+|\I(R)|=2\mathbf{c}(R).
\end{equation}

For an inverse-closed subset $S$ of $R$, we have $(S\cap\Delta)^{-1}=S^{-1}\cap\Delta^{-1}=S\cap\Delta^{-1}$ for every $\Delta\in Q\backslash R/Q$, which means that $S\cap\Delta^{-1}$ is determined by $S\cap\Delta$ when $\Delta\in\mathcal{D}_2$. Thus a subset $S\in\mathcal{L}$ is determined by
\[
S\cap A_1,\dots,S\cap A_s,S\cap B_1,\dots,S\cap B_t.
\]
According to Lemma~\ref{lem:nsM}, for each $i\in \{1,\ldots,s\}$, there are at most $2^{\mathbf{c}(A_i)-\frac{1}{2}a_i+\frac{1}{2}}$ choices for $S\cap A_i$. Hence the number of choices for $S\cap A_1,\dots,S\cap A_s$ is at most
\[
\prod_{i=1}^{s}2^{\mathbf{c}(A_i)-\frac{1}{2}a_i+\frac{1}{2}}=2^{\sum\limits_{i=1}^{s}\mathbf{c}(A_i)-\frac{1}{2}\sum\limits_{i=1}^{s}a_i+\frac{1}{2}s}.
\]
By Lemma~\ref{lem:nsN}, for each $i\in\{1,\ldots,t\}$, we have at most $2^{|B_i|-b_i+1}$ choices for $S\cap B_i$. This implies that the number of choices for $S\cap B_1,\dots,S\cap B_t$ is at most
\[
\prod_{i=1}^{t}2^{|B_i|-b_i+1}=2^{\sum\limits_{i=1}^{t}|B_i|-\sum\limits_{i=1}^{t}b_i+t}.
\]
It follows that
\begin{align*}
  |\mathcal{L}| & \leq 2^{\sum\limits_{i=1}^{s}\mathbf{c}(A_i)-\frac{1}{2}\sum\limits_{i=1}^{s}a_i+\frac{1}{2}s}\cdot2^{\sum\limits_{i=1}^{t}|B_i|-\sum\limits_{i=1}^{t}b_i+t}\\
  &=2^{\left(\frac{1}{2}\sum\limits_{i=1}^{s}|A_i|+\frac{1}{2}\sum\limits_{i=1}^{s}|\I(A_i)|\right)+\sum\limits_{i=1}^{t}|B_i|-(\frac{1}{2}\sum\limits_{i=1}^{s}a_i+\sum\limits_{i=1}^{t}b_i)+(\frac{1}{2}s+t)}\\
  &=2^{\frac{1}{2}\left(\sum\limits_{i=1}^{s}|A_i|+2\sum\limits_{i=1}^{t}|B_i|\right)+\frac{1}{2}\sum\limits_{i=1}^{s}|\I(A_i)|-(\frac{1}{2}\sum\limits_{i=1}^{s}a_i+\sum\limits_{i=1}^{t}b_i)+(\frac{1}{2}s+t)}.
\end{align*}
This together with~\eqref{t+2m}, \eqref{a+2b} and~\eqref{alpha+2beta+i} yields
\[
|\mathcal{L}|\leq 2^{\mathbf{c}(R)-\frac{1}{2}|R\,{:}\,Q|+\frac{1}{2}\ell},
\]
which completes the proof.
\end{proof}

We are now ready to prove the main result of this subsection.

\begin{proof}[Proof of Proposition~\ref{prop:ns-nonnorm}]
Let $\Omega=R/Q=\{Qx: x\in R\}$ be the set of right cosets of $Q$ in $R$. Then $R$ acts transitively by right multiplication on $\Omega$ with stabilizer $Q$. Note that the stabilizer $Q$ does not act trivially on $\Omega$, for otherwise we would have $(x^{-1}Qx)\cap Q=Q$ for every $x\in R$, contradicting the assumption that $Q$ is non-normal in $R$. This implies that $R$ is not regular on $\Omega$. Let $\ell$ be the number of orbits of $Q$ on $\Omega$, that is, the number of double cosets of $Q$ in $R$. According to~\cite[Lemma 3.1]{Morris-S2021}, we have $\ell\leq\frac{3}{4}|\Omega|=\frac{3}{4}|R\,{:}\,Q|$.
Hence by Lemma~\ref{lem:nsL}, we have
\[
|\mathcal{L}|\leq2^{\mathbf{c}(R)-\frac{1}{2}|R\,{:}\,Q|+\frac{1}{2}\ell}\leq2^{\mathbf{c}(R)-\frac{1}{2}|R\,{:}\,Q|+\frac{3}{8}|R\,{:}\,Q|}=2^{\mathbf{c}(R)-\frac{1}{8}|R\,{:}\,Q|},
\]
as desired.
\end{proof}

\subsection{Cosets from an orbit of a stabilizer}

Let $G$ be a finite transitive permutation group on $R$ that properly contains the right regular representation of $R$, let $G_1$ be the stabilizer of $1\in R$ in $G$, let
\[
K=\bigcap_{g\in G}R^g
\]
be the core of $R$ in $G$,
and let $\overline{\phantom{u}}\colon R\to R/K$ be the quotient modulo $K$. Then $G/K$ acts naturally on $R/K$ and induces a transitive permutation group containing the right regular representation of $R/K$, and
\[
H:=G_1K/K
\]
is the stabilizer of $\overline{1}\in R/K$ in $G/K$.
For a subset $\Delta$ of $R/K$, the full preimage of $\Delta$ in $R$ under $\overline{\phantom{u}}$, denoted by $\widetilde{\Delta}$, is the union of the right $K$-cosets in $\Delta$.

\begin{proposition}\label{prop:ns-G1}
With the above notation, suppose that $R/K$ is neither abelian of exponent greater than $2$ nor generalized dicyclic, and let
\[
\mathcal{L}=\{S\subseteq R:S=S^{-1},\,S \text{ intersects $\widetilde{\Delta}$ evenly for each } H\text{-orbit }\Delta\text{ on } R/K\}.
\]
We have $|\mathcal{L}|\leq 2^{\mathbf{c}(R)-\frac{1}{96}|R/K|}$.
\end{proposition}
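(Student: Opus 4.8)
The plan is to push everything down to the quotient $\overline{R}=R/K$, apply Lemma~\ref{lem:GTS} there, and then reassemble the count on $R$ one $K$-coset-orbit at a time. Since $K$ is normal in $G$, it is normal in $R$, so right $K$-cosets coincide with left $K$-cosets, $(Kx)^{-1}=Kx^{-1}$, and $R/K$ is a group; moreover $\overline{R}$, with the transitive action induced by $G/K$, contains its right regular representation and, by hypothesis, is neither abelian of exponent greater than $2$ nor generalized dicyclic. So Lemma~\ref{lem:GTS}, applied to this permutation group, gives that the number $t$ of $\langle H,\overline{\iota}\rangle$-orbits on $\overline{R}$ satisfies $t\le\mathbf{c}(\overline{R})-\frac{1}{96}|\overline{R}|$, where $\overline{\iota}$ is inversion on $\overline{R}$.

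Next I reformulate membership in $\mathcal{L}$. For inverse-closed $S\subseteq R$ put $w_S(\overline{x})=|S\cap Kx|$ for $\overline{x}\in\overline{R}$. Unwinding the definition of ``intersects $\widetilde{\Delta}$ evenly'', one sees that $S$ intersects $\widetilde{\Delta}$ evenly for every $H$-orbit $\Delta$ exactly when $w_S$ is constant on every $H$-orbit of $\overline{R}$; and $S=S^{-1}$ together with $(Kx)^{-1}=Kx^{-1}$ forces $w_S(\overline{x})=w_S(\overline{x}^{-1})$ automatically. Hence $S\in\mathcal{L}$ iff $S$ is inverse-closed and $w_S$ is constant on each $\langle H,\overline{\iota}\rangle$-orbit of $\overline{R}$. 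Consequently, writing $O_1,\dots,O_t$ for the $\langle H,\overline{\iota}\rangle$-orbits on $\overline{R}$, whose preimages $\widetilde{O_1},\dots,\widetilde{O_t}$ are inverse-closed and partition $R$ (so that $\sum_u\mathbf{c}(\widetilde{O_u})=\mathbf{c}(R)$), the sets $S\cap\widetilde{O_u}$ may be chosen independently and $|\mathcal{L}|=\prod_u F_u$, where $F_u$ is the number of inverse-closed subsets of $\widetilde{O_u}$ on which the fibre-size $w$ is constant over $O_u$.

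Now I estimate $F_u$. Inside $O_u$ the right $K$-cosets break into $q_u$ inverse pairs $\{Kx,Kx^{-1}\}$ with $Kx\ne Kx^{-1}$ and $p_u$ self-inverse cosets (those $Kx$ with $\overline{x}^{2}=\overline{1}$), where $|O_u|=2q_u+p_u$; since every element of order at most $2$ in $\widetilde{O_u}$ lies in a self-inverse coset, one has $\mathbf{c}(\widetilde{O_u})=q_u|K|+\sum_j\mathbf{c}(\Lambda_j)$ with $\Lambda_j$ running over the self-inverse cosets of $O_u$. Because $S$ is inverse-closed, $S\cap\widetilde{O_u}$ is determined by its trace on one coset from each inverse pair and on each self-inverse coset; once the common value $w\equiv c$ on $O_u$ is fixed, the trace on a paired coset is an arbitrary $c$-subset of a $|K|$-set (at most $2^{|K|-1}$ choices by~\eqref{eq:BoundBinom}) and the trace on a self-inverse coset $\Lambda$ is an inverse-closed $c$-subset of $\Lambda$ (at most $2^{\mathbf{c}(\Lambda)-1}$ choices by Lemma~\ref{lem:nics-givensize}). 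Summing the resulting product over the $|K|+1$ values of $c$, bounding all factors but one by these maxima and the remaining one by its total ($2^{|K|}$, respectively $2^{\mathbf{c}(\Lambda)}$), and using the identity above, a short computation gives $F_u\le 2^{\mathbf{c}(\widetilde{O_u})-q_u-p_u+1}$.

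It remains to multiply up. Summing over $u$ one has $\sum_u q_u=\frac{1}{2}\big(|\overline{R}|-|\I(\overline{R})|\big)$, $\sum_u p_u=|\I(\overline{R})|$ and $\sum_u 1=t$, so $\sum_u(q_u+p_u-1)=\mathbf{c}(\overline{R})-t$, and therefore
\[
|\mathcal{L}|=\prod_{u=1}^{t}F_u\ \le\ 2^{\mathbf{c}(R)-\mathbf{c}(\overline{R})+t}\ \le\ 2^{\mathbf{c}(R)-\frac{1}{96}|\overline{R}|},
\]
the last step by the bound on $t$. The place where I expect the real care to be needed is precisely this final bookkeeping: the per-orbit estimate wastes an additive $1$ and a multiplicative $|K|+1$, and there may be up to $|\I(\overline{R})|$ self-inverse cosets — nearly all of them when $\overline{R}$ has exponent $2$ — each contributing only a gain of about $1$; but this same count $|\I(\overline{R})|$ is exactly what, via $\mathbf{c}(\overline{R})=\frac{1}{2}(|\overline{R}|+|\I(\overline{R})|)$, lets $t$ grow to $\mathbf{c}(\overline{R})-|\overline{R}|/96$ in Lemma~\ref{lem:GTS}, so the two cancel and leave exactly the exponent $\frac{1}{96}|\overline{R}|$. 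Calibrating the gain per inverse pair against the gain per self-inverse coset so that this cancellation is exact — rather than leaving a spurious factor of $2$ in the denominator — is the crux; the fibre-wise counting itself is elementary.
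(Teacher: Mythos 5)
Your proposal is correct and follows essentially the same route as the paper: the paper's Lemmas~\ref{intwithicHorb}--\ref{intwithallHorb} perform exactly your fibre-wise count (one ``free'' coset per orbit, Lemma~\ref{lem:nics-givensize} and~\eqref{eq:BoundBinom} for the rest), arriving at the same intermediate bound $2^{\mathbf{c}(R)-\mathbf{c}(R/K)+\kappa}$ before invoking Lemma~\ref{lem:GTS}. The only difference is organizational — you group by $\langle H,\iota\rangle$-orbits at the outset, whereas the paper works with $H$-orbits and pairs them by $\iota$ afterwards — and your final bookkeeping, including the exact cancellation you worry about, checks out.
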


In the rest of this subsection we embark on the proof of Proposition~\ref{prop:ns-G1} and fix the notation $R$, $K$, $H$, $\overline{\phantom{u}}$ and $\widetilde{\phantom{u}}$. Moreover, let $\iota$ be the permutation on $R/K$ sending every element to its inverse.

\begin{lemma}\label{intwithicHorb}
Let $\Delta\subseteq R/K$ be an $H$-orbit on $R/K$ such that $\Delta^\iota=\Delta$, and let
\[
\mathcal{M}=\{S\subseteq\widetilde{\Delta}: S=S^{-1},\, S\text{ intersects }\widetilde{\Delta}\text{ evenly}\}.
\]
We have $|\mathcal{M}|\leq2^{\mathbf{c}(\widetilde{\Delta})-\mathbf{c}(\Delta)+1}$.
\end{lemma}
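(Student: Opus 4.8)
The plan is to exploit that $K\trianglelefteq R$, so that for each right $K$-coset $\Lambda$ the set $\Lambda^{-1}$ is again a right $K$-coset, and two right $K$-cosets are either equal or disjoint; this makes the situation much simpler than in Lemma~\ref{lem:nsM}, and no combinatorial rearrangement is needed. Write $\Delta=\{\Lambda_1,\dots,\Lambda_b\}$, where the $\Lambda_i$ are the right $K$-cosets comprising $\widetilde{\Delta}$. Since $\Delta^\iota=\Delta$, inversion induces an involution $\sigma$ on $\{1,\dots,b\}$ with $\Lambda_i^{-1}=\Lambda_{\sigma(i)}$; let $F$ be its set of fixed points, $m_1=|F|$, and $m_2$ the number of transpositions, so that $b=m_1+2m_2$. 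Since $\mathcal{I}(\Delta)$ is exactly the set of those $\Lambda_i$ with $\Lambda_i=\Lambda_i^{-1}$, we get $\mathbf{c}(\Delta)=(b+m_1)/2=m_1+m_2$. Moreover, for $i$ lying in a transposition we have $\Lambda_i\cap\Lambda_i^{-1}=\emptyset$, hence $\mathcal{I}(\Lambda_i)=\emptyset$ and $\mathbf{c}(\Lambda_i)=|K|/2$; a direct count then yields $\mathbf{c}(\widetilde{\Delta})=\sum_{i\in F}\mathbf{c}(\Lambda_i)+m_2|K|$.

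Next I would count the $S\in\mathcal{M}$. Since $S$ is inverse-closed, $S\cap\Lambda_{\sigma(i)}=(S\cap\Lambda_i)^{-1}$, so $S$ is determined by the sets $S\cap\Lambda_i$ as $i$ runs over a transversal of the $\sigma$-orbits. Fix one such representative $i_0$. Choosing $S\cap\Lambda_{i_0}$ first — an inverse-closed subset of $\Lambda_{i_0}$ if $i_0\in F$, an arbitrary subset of $\Lambda_{i_0}$ otherwise — gives at most $2^{\mathbf{c}(\Lambda_{i_0})}$, respectively $2^{|K|}$, possibilities, and it fixes the common size $k=|S\cap\Lambda_{i_0}|$. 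For each remaining representative $i$: if $i\in F$, the number of inverse-closed $k$-subsets of $\Lambda_i$ is at most $2^{\mathbf{c}(\Lambda_i)-1}$ by Lemma~\ref{lem:nics-givensize}; if $i$ lies in a transposition, the number of $k$-subsets of $\Lambda_i$ is $\binom{|K|}{k}\leq 2^{|K|-1}$ by~\eqref{eq:BoundBinom}.

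Finally I would multiply these bounds and simplify. Substituting $\sum_{i\in F}\mathbf{c}(\Lambda_i)=\mathbf{c}(\widetilde{\Delta})-m_2|K|$, the contributions of $|K|$ and of $m_2$ cancel in both cases for the type of $i_0$, and the exponent collapses to $\mathbf{c}(\widetilde{\Delta})-m_1-m_2+1=\mathbf{c}(\widetilde{\Delta})-\mathbf{c}(\Delta)+1$, as claimed; the degenerate case $b=1$, where $\widetilde{\Delta}$ is a single inverse-closed $K$-coset and the bound holds with equality, is already covered by taking $i_0\in F$. The only point requiring a bit of care is the bookkeeping for the distinguished representative $i_0$, split into the two cases according to whether it is a fixed point of $\sigma$ or lies in a transposition; I expect this to be the most error-prone step, but it presents no genuine obstacle.
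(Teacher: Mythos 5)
Your proposal is correct and follows essentially the same route as the paper: decompose $\widetilde{\Delta}$ into right $K$-cosets, use that inversion permutes them (fixing exactly those indexed by $\mathcal{I}(\Delta)$), choose the subset in one distinguished coset freely to pin down the common intersection size, and then apply Lemma~\ref{lem:nics-givensize} to the remaining self-inverse cosets and~\eqref{eq:BoundBinom} to one coset from each inverse-paired couple. The only cosmetic difference is that the paper splits into the cases $\mathcal{I}(\Delta)\neq\emptyset$ and $\mathcal{I}(\Delta)=\emptyset$ rather than phrasing it via a choice of representative $i_0$; the bookkeeping and the resulting exponent are identical.
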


\begin{proof}
Write $\mathcal{I}(\Delta)=\{\overline{x_1},\ldots,\overline{x_n}\}$ and $\Delta\setminus \mathcal{I}(\Delta)=\{\overline{y_1},\ldots,\overline{y_{2m}}\}$ with $(\overline{y_j})^{-1}=\overline{y_{m+j}}$ for $j\in\{1,\dots,m\}$. It is clear that
\[
n+m=\frac{n}{2}+\frac{n+2m}{2}=\frac{|\mathcal{I}(\Delta)|+|\Delta|}{2}=\mathbf{c}(\Delta),
\]
and that
\[
\sum_{i=1}^{n}\mathbf{c}(Kx_i)+\sum_{j=1}^{m}|Ky_j|=\mathbf{c}(\widetilde{\Delta}).
\]
For $i\in\{1,\dots,n\}$, since $\overline{x_i}$ has order at most $2$, we have $Kx_i^{-1}=Kx_i$.
For an inverse-closed subset $S$ of $R$, we have $(S\cap Kz)^{-1}=S^{-1}\cap Kz^{-1}=S\cap Kz^{-1}$ for every $\overline{z}\in\Delta$,
which means that $S\cap Kz^{-1}$ is determined by $S\cap Kz$ when $\overline{z}\in\Delta\setminus \mathcal{I}(\Delta)$.
Thus an inverse-closed subset $S$ of $\widetilde{\Delta}$ is determined by
\[
S\cap Kx_1,\dots,S\cap Kx_n,S\cap Ky_1,\dots,S\cap Ky_m.
\]

First assume $n\geq 1$.
Consider $S$ in $\mathcal{M}$. To determine $S\cap Kx_1$, we may choose an inverse-closed subset of $Kx_1$, which means that there are $2^{\mathbf{c}(Kx_1)}$ choices for $S\cap Kx_1$. Since $S$ intersects $\widetilde{\Delta}$ evenly, it follows that, once $S\cap Kx_1$ is determined, the size of $S\cap Kz$ for every $\overline{z}\in\Delta\setminus \{\overline{x_1}\}$ is also determined. Thus, Lemma~\ref{lem:nics-givensize} implies that there are at most $2^{\mathbf{c}(Kx_i)-1}$ choices for each $S\cap Kx_i$ with $i\in\{2,\dots,n\}$, and~\eqref{eq:BoundBinom} implies that there are at most $2^{|Ky_j|-1}$ choices for each $S\cap Ky_j$ with $j\in\{1,\ldots,m\}$. Consequently,
\begin{align*}
|\mathcal{M}|&\leq 2^{\mathbf{c}(Kx_1)}\cdot\prod_{i=2}^{n}2^{\mathbf{c}(Kx_i)-1}\cdot\prod_{j=1}^{m}2^{|Ky_j|-1}\\
&=2^{\sum_{i=1}^{n}\mathbf{c}(Kx_i)-(n-1)+\sum_{j=1}^{m}|Ky_j|-m}\\
&=2^{\sum_{i=1}^{n}\mathbf{c}(Kx_i)+\sum_{j=1}^{m}|Ky_j|-\mathbf{c}(\Delta)+1}\\
&=2^{\mathbf{c}(\widetilde{\Delta})-\mathbf{c}(\Delta)+1}.
\end{align*}

Now assume $n=0$ and consider $S$ in $\mathcal{M}$. To determine $S\cap Ky_1$, one may choose a subset of $Ky_1$, which means that there are $2^{|Ky_1|}$ choices for $S\cap Ky_1$. Note that once $S\cap Ky_1$ is determined, the size of $S\cap Kz$ for every $\overline{z}\in\Delta\setminus \{\overline{y_1}\}$ is also determined. Thus, by~\eqref{eq:BoundBinom}, there are at most $2^{|Ky_j|-1}$ choices for each $S\cap Ky_j$ with $j\in\{2,\ldots,m\}$. This leads to
\[
|\mathcal{M}|\leq2^{|Ky_1|}\cdot\prod_{j=2}^{m}2^{|Ky_j|-1}=2^{\sum_{j=1}^{m}|Ky_j|-(m-1)}=2^{\mathbf{c}(\widetilde{\Delta})-\mathbf{c}(\Delta)+1}.\qedhere
\]
\end{proof}

\begin{lemma}\label{intwithHorb}
Let $\Delta\subseteq R/K$ be an $H$-orbit on $R/K$, and let
\[
\mathcal{N}=\{S\subseteq\widetilde{\Delta}: S\text{ intersects }\widetilde{\Delta}\text{ evenly}\}.
\]
We have $|\mathcal{N}|\leq2^{|\widetilde{\Delta}|-|\Delta|+1}$.
\end{lemma}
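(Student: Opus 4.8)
The plan is to mimic the proof of Lemma~\ref{lem:nsN} verbatim, with the subgroup $Q$ there replaced by the normal subgroup $K$ and the double coset $QxQ$ replaced by the preimage $\widetilde{\Delta}$ of an $H$-orbit. The key structural fact I would use is that, since $\overline{\phantom{u}}\colon R\to R/K$ is the quotient map, the preimage of each element of $\Delta$ is a single right $K$-coset; hence if I write $\Delta=\{\overline{z_1},\ldots,\overline{z_t}\}$ with $t=|\Delta|$, then $\widetilde{\Delta}$ is the disjoint union of the right $K$-cosets $Kz_1,\ldots,Kz_t$, so in particular $|\widetilde{\Delta}|=t|K|$. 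A subset $S\in\mathcal{N}$ is completely determined by the tuple $(S\cap Kz_1,\ldots,S\cap Kz_t)$.

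Next I would count these tuples. For $S\cap Kz_1$ I allow an arbitrary subset of $Kz_1$, giving $2^{|K|}$ choices; this fixes the common value $a:=|S\cap Kz_1|$. Because $S$ intersects $\widetilde{\Delta}$ evenly, each $S\cap Kz_i$ with $i\in\{2,\ldots,t\}$ must be a subset of $Kz_i$ of this prescribed size $a$, so by~\eqref{eq:BoundBinom} there are at most $\binom{|K|}{a}\leq 2^{|K|-1}$ choices for each of them. Multiplying,
\[
|\mathcal{N}|\leq 2^{|K|}\cdot\bigl(2^{|K|-1}\bigr)^{t-1}=2^{t|K|-(t-1)}=2^{|\widetilde{\Delta}|-|\Delta|+1},
\]
which is the asserted bound.

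There is essentially no obstacle here: the argument is the same simple pigeonhole-plus-binomial estimate as in Lemma~\ref{lem:nsN}, and—just as there—no inverse-closure hypothesis on $\widetilde{\Delta}$ or on $S$ is needed, so the statement holds for an arbitrary $H$-orbit $\Delta$ (not only those with $\Delta^\iota=\Delta$). The only point deserving a one-line justification is that $\widetilde{\Delta}$ really does split into exactly $|\Delta|$ right $K$-cosets, which is immediate from the definition of $\widetilde{\phantom{u}}$ as the union of the right $K$-cosets lying in $\Delta$.
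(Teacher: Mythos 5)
Your proof is correct and is essentially identical to the paper's own argument: both decompose $\widetilde{\Delta}$ into the $|\Delta|$ right $K$-cosets, allow an arbitrary subset of the first coset, and then use the evenness condition together with~\eqref{eq:BoundBinom} to bound the choices for each remaining coset by $2^{|K|-1}$. Your closing remark that no inverse-closure hypothesis is needed is also consistent with the paper, which reserves that hypothesis for Lemma~\ref{intwithicHorb}.
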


\begin{proof}
Write $\Delta=\{\overline{x_1},\ldots,\overline{x_n}\}$, where $n=|\Delta|$. Consider $S$ in $\mathcal{M}$.
As $S$ intersects $\widetilde{\Delta}$ evenly, we derive that, once $S\cap Kx_1$ is determined, the size of $S\cap Kx$ for every $\overline{x}\in\Delta\setminus \{\overline{x_1}\}$ is also determined.
Hence we deduce from~\eqref{eq:BoundBinom} that
\[
|\mathcal{N}|\leq2^{|Kx_1|}\cdot\prod_{i=2}^n2^{|Kx_i|-1}=2^{\sum_{i=1}^n|Kx_i|-(n-1)}=2^{|\widetilde{\Delta}|-|\Delta|+1},
\]
as desired.
\end{proof}

\begin{lemma}\label{intwithallHorb}
Let $\kappa$ be the number of $\langle H,\iota\rangle$-orbits on $R/K$, and let
\[
\mathcal{L}=\{S\subseteq R: S=S^{-1},\, S \text{ intersects $\widetilde{\Delta}$ evenly for each } H\text{-orbit }\Delta\text{ on } R/K\}.
\]
We have $|\mathcal{L}|\leq 2^{\mathbf{c}(R)-\mathbf{c}(R/K)+\kappa}$.
\end{lemma}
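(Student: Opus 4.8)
The plan is to partition the $H$-orbits on $R/K$ according to whether they are fixed setwise by the inversion map $\iota$, and then multiply the estimates from Lemmas~\ref{intwithicHorb} and~\ref{intwithHorb} across all orbits, in direct analogy with the proof of Lemma~\ref{lem:nsL}. Concretely, split the set of $H$-orbits on $R/K$ into $\mathcal{D}_1=\{\Delta\colon \Delta^\iota=\Delta\}$ and $\mathcal{D}_2=\{\Delta\colon \Delta^\iota\neq\Delta\}$. Write $\mathcal{D}_1=\{A_1,\dots,A_s\}$ and $\mathcal{D}_2=\{B_1,\dots,B_{2t}\}$ with $B_i^\iota=B_{i+t}$ for $i\in\{1,\dots,t\}$. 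Then $\kappa=s+t$, since each $\iota$-fixed $H$-orbit is a single $\langle H,\iota\rangle$-orbit and each pair $\{B_i,B_{i+t}\}$ fuses into one. Since an inverse-closed $S\subseteq R$ satisfies $(S\cap\widetilde{B_i})^{-1}=S\cap\widetilde{B_{i+t}}$, the data $S\cap\widetilde{B_i}$ for $i>t$ is determined by the data for $i\le t$; so $S\in\mathcal{L}$ is completely determined by the tuple $(S\cap\widetilde{A_1},\dots,S\cap\widetilde{A_s},S\cap\widetilde{B_1},\dots,S\cap\widetilde{B_t})$.

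Next I would bound the number of choices for each component. For each $A_i\in\mathcal{D}_1$, Lemma~\ref{intwithicHorb} applies (with $\Delta=A_i$, noting $A_i^\iota=A_i$), giving at most $2^{\mathbf{c}(\widetilde{A_i})-\mathbf{c}(A_i)+1}$ choices for $S\cap\widetilde{A_i}$. For each $B_i$ with $i\le t$, Lemma~\ref{intwithHorb} gives at most $2^{|\widetilde{B_i}|-|B_i|+1}$ choices for $S\cap\widetilde{B_i}$. Multiplying,
\[
|\mathcal{L}|\le \prod_{i=1}^{s}2^{\mathbf{c}(\widetilde{A_i})-\mathbf{c}(A_i)+1}\cdot\prod_{i=1}^{t}2^{|\widetilde{B_i}|-|B_i|+1}
=2^{\sum_{i=1}^{s}(\mathbf{c}(\widetilde{A_i})-\mathbf{c}(A_i))+\sum_{i=1}^{t}(|\widetilde{B_i}|-|B_i|)+s+t}.
\]

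To finish, I would collect the exponent using the additivity identities over the orbit partition of $R/K$. Since $\mathcal{I}(R/K)\subseteq\bigcup_i A_i$ (elements of order at most $2$ are $\iota$-fixed, hence lie in $\iota$-fixed orbits), one has $\sum_{i=1}^{s}\mathbf{c}(A_i)=\tfrac12\big(\sum_i|A_i|+\sum_i|\mathcal{I}(A_i)|\big)$ and $\sum_i|A_i|+\sum_i|\mathcal{I}(A_i)|+2\sum_i|B_i|=|R/K|+|\mathcal{I}(R/K)|=2\mathbf{c}(R/K)$, so $\sum_{i=1}^{s}\mathbf{c}(A_i)+\sum_{i=1}^{t}|B_i|=\mathbf{c}(R/K)$; and similarly, taking full preimages, $\sum_{i=1}^{s}\mathbf{c}(\widetilde{A_i})+\sum_{i=1}^{t}|\widetilde{B_i}|=\mathbf{c}(R)$ (again using that elements of $R$ of order at most $2$ project into $\iota$-fixed orbits and hence lie in the $\widetilde{A_i}$). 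Substituting these into the exponent and using $\kappa=s+t$ yields $|\mathcal{L}|\le 2^{\mathbf{c}(R)-\mathbf{c}(R/K)+\kappa}$, as claimed. The only mildly delicate point is the bookkeeping of the $\mathbf{c}$-identities — in particular verifying that $\widetilde{\mathcal{I}(R/K)}$-type contributions land entirely in the $\widetilde{A_i}$ blocks — but this follows immediately from the observation that an element's square having order at most $2$ is a condition invariant under $\iota$, so there is no real obstacle.
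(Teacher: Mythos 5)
Your proposal is correct and follows essentially the same route as the paper: the same split of the $H$-orbits into $\iota$-fixed orbits and $\iota$-paired orbits, the same reduction of $S$ to the tuple $(S\cap\widetilde{A_i},S\cap\widetilde{B_j})$, the same invocation of Lemmas~\ref{intwithicHorb} and~\ref{intwithHorb}, and the same additivity identities for $\mathbf{c}$ (including the observation that all elements of order at most $2$ land in the $\iota$-fixed orbits). No gaps.
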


\begin{proof}
Let $\mathcal{D}$ be the set of $H$-orbits on $R/K$.
It is well known and not difficult to prove (see, for example,~\cite[Theorem~24.1]{Wielandt1964}) that the set of $H$-orbits on $R/K$ is $\iota$-invariant.
We divide $\mathcal{D}$ into the following two subsets:
\begin{align*}
\mathcal{D}_1&=\{\Delta\in\mathcal{D}: \Delta^\iota=\Delta\},\\
\mathcal{D}_2&=\{\Delta\in\mathcal{D}: \Delta^\iota\neq\Delta\}.
\end{align*}
Note that the elements in $\mathcal{D}_2$ are paired by $\iota$.
Write $\mathcal{D}_1=\{A_1,\ldots,A_s\}$ and $\mathcal{D}_2=\{B_1,\ldots,B_{2t}\}$ with $(B_j)^\iota=B_{t+j}$ for $j\in\{1,\dots,t\}$. Clearly,
\[
s+t=\kappa,
\]
\[
\sum_{i=1}^{s}\mathbf{c}(A_i)+\sum_{j=1}^{t}|B_j|=\sum_{i=1}^{s}\mathbf{c}(A_i)+\sum_{j=1}^{2t}\mathbf{c}(B_j)=\mathbf{c}(R/K),
\]
\[
\sum_{i=1}^{s}\mathbf{c}(\widetilde{A_i})+\sum_{j=1}^{t}|\widetilde{B_j}|=\sum_{i=1}^{s}\mathbf{c}(\widetilde{A_i})+\sum_{j=1}^{2t}\mathbf{c}(\widetilde{B_j})=\mathbf{c}(R).
\]

For an inverse-closed subset $S$ of $R$, we have $(S\cap\widetilde{\Delta})^{-1}=S\cap\widetilde{\Delta}^{-1}$ for every $\Delta\in\mathcal{D}$,
which means that $S\cap\widetilde{\Delta}^{-1}$ is determined by $S\cap\widetilde{\Delta}$ when $\Delta\in\mathcal{D}_2$. Thus a subset $S\in\mathcal{L}$ is determined by
\[
S\cap\widetilde{A_1},\dots,S\cap\widetilde{A_s},S\cap\widetilde{B_1},\dots,S\cap\widetilde{B_t}.
\]
Since $S$ intersects $\widetilde{\Delta}$ evenly for all $\Delta\in\mathcal{D}$, we conclude by Lemmas~\ref{intwithicHorb} and~\ref{intwithHorb} that
\begin{align*}
  |\mathcal{L}| & \leq \prod_{i=1}^{s}2^{\mathbf{c}(\widetilde{A_i})-\mathbf{c}(A_i)+1}\cdot\prod_{j=1}^{t}2^{|\widetilde{B_j}|-|B_j|+1}\\
  & =2^{(\sum_{i=1}^{s}\mathbf{c}(\widetilde{A_i})+\sum_{j=1}^{t}|\widetilde{B_j}|)-(\sum_{i=1}^{s}\mathbf{c}(A_i)+\sum_{j=1}^{t}|B_j|)+(s+t)}\\
  & =2^{\mathbf{c}(R)-\mathbf{c}(R/K)+\kappa},
\end{align*}
which completes the proof.
\end{proof}

\begin{proof}[Proof of Proposition~\ref{prop:ns-G1}]
It follows from Lemma~\ref{lem:GTS} that the number $\kappa$ of $\langle H, \iota\rangle$-orbits on $R/K$ is at most $\mathbf{c}(R/K)-\frac{1}{96}|R/K|$. Applying Lemma~\ref{intwithallHorb}, we obtain
\[
|\mathcal{L}|\leq 2^{\mathbf{c}(R)-\mathbf{c}(R/K)+\kappa}\leq2^{\mathbf{c}(R)-\frac{1}{96}|R/K|},
\]
completing the proof.
\end{proof}

\section{Reduction}\label{Sec4}

Let $R$ be a group of order $r$ that is neither abelian of exponent greater than $2$ nor generalized dicyclic. We also use $R$ to represent the right regular representation of the group $R$. Let
\begin{align*}
\mathcal{Z}&=\{S\subseteq R: S^{-1}=S,\,\Aut(\Cay(R,S))>R\}\\
&=\{S\subseteq R: S^{-1}=S,\,\text{there exists }G\leq\Aut(\Cay(R,S))\text{ with }R\text{ maximal in }G\}.
\end{align*}
Clearly, $\mathcal{Z}$ is the set of inverse-closed subsets $S$ of $R$ such that $\Cay(R,S)$ is not a GRR of $R$, and the key to prove Theorem~\ref{Thm:main} is to establish a suitable upper bound for $|\mathcal{Z}|$.

Let $S$ be a subset of $R$ in $\mathcal{Z}$. There exists some group $G\leq\Aut(\Cay(R,S))$ such that $R$ is a maximal subgroup of $G$. Denote the stabilizer of the vertex $1$ in $G$ by $G_1$, and denote the core of $R$ in $G$ by $\Core_G(R)$, that is,
\[
\Core_G(R)=\bigcap_{g\in G}R^g.
\]
It is clear that $G/\Core_G(R)$ acts primitively and faithfully by right multiplication on the set $G/R$ of right cosets of $R$ in $G$ and that $G_1\Core_G(R)/\Core_G(R)\cong G_1$ acts regularly on $G/R$.
To sum up, for each $S\in\mathcal{Z}$, there exists a group $G=G(S)$ such that $G/\Core_G(R)$ is a primitive permutation group on $G/R$ with point stabilizer $R/\Core_G(R)$ and a regular subgroup $G_1\Core_G(R)/\Core_G(R)\cong G_1$.

Now we have chosen a group $G(S)$ for each $S\in\mathcal{Z}$.
For $\varepsilon\in(0,0.5)$ let
\begin{align*}
\mathcal{Y}_\varepsilon&=\{S\in\mathcal{Z}:|G(S)_1|>2^{r^{0.5-\varepsilon}}\},\\
\mathcal{X}_\varepsilon&=\{S\in\mathcal{Y}_\varepsilon:|\Core_{G(S)}(R)|\leq96\log_2r\},\\
\mathcal{W}_\varepsilon&=\{S\in\mathcal{X}_\varepsilon:\text{every orbit of }\Core_{G(S)}(R)\text{ is stabilized by }G(S)_1\},\\
\mathcal{T}_\varepsilon&=\{S\in\mathcal{X}_\varepsilon:\text{some orbit of }\Core_{G(S)}(R)\text{ is not stabilized by }G(S)_1\}.
\end{align*}
We omit the subscript $\varepsilon$ when we take $\varepsilon=0.001$.



The following result is obtained by combining Lemma~\ref{lem:GTS} with~\cite[Lemma 3.2]{Morris-S2021}. It is worth remarking that~\cite[Lemma 3.2]{Morris-S2021} depends upon the following asymptotic enumeration due to Lubotzky~\cite{Lubotzky2001} (see the third line on page~416 of~\cite{Morris-S2021}): The number of isomorphism classes of groups of order $n$ that are $d$-generated is at most $n^{2(d+1)\log_2n}$.

\begin{lemma}\label{lem:Gsmall}
Let $R$ be a group of order $r$ that is neither abelian of exponent greater than $2$ nor generalized dicyclic, and let $\varepsilon\in (0,0.5)$. If $r$ is large enough, then
\[
|\mathcal{Z}\setminus\mathcal{Y}_\varepsilon|\leq2^{\mathbf{c}(R)-\frac{r}{96}+r^{1-\varepsilon}}.
\]
\end{lemma}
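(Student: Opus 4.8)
The plan is to bound $|\mathcal{Z}\setminus\mathcal{Y}_\varepsilon|$ by summing, over all possible overgroups $G$ of $R$ with $|G_1|\leq 2^{r^{0.5-\varepsilon}}$, the number of inverse-closed sets $S$ with $G\leq\Aut(\Cay(R,S))$. For a fixed such $G$, Lemma~\ref{lem:GTS} gives that the number of inverse-closed $S$ with $G\leq\Aut(\Cay(R,S))$ is at most $2^{\mathbf{c}(R)-r/96}$, since $R<G$ and $R$ is neither abelian of exponent greater than $2$ nor generalized dicyclic. So the whole estimate reduces to controlling the number of isomorphism classes of overgroups $G$ that can arise, and then multiplying.

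The counting of overgroups is exactly where \cite[Lemma~3.2]{Morris-S2021} enters. First I would recall that for $S\in\mathcal{Z}\setminus\mathcal{Y}_\varepsilon$ the chosen group $G=G(S)$ satisfies $|G|=|R|\cdot|G_1|\leq r\,2^{r^{0.5-\varepsilon}}$, and that $G$ is generated by $R$ together with a bounded number of extra elements — more precisely, $G$ has a generating set of size $O(\log|G|)$, hence $O(r^{0.5-\varepsilon})$, since $R$ itself is $(\log_2 r)$-generated and $G_1$ contributes at most $\log_2|G_1|\leq r^{0.5-\varepsilon}$ further generators. By Lubotzky's bound, the number of $d$-generated groups of order $n$ is at most $n^{2(d+1)\log_2 n}$; plugging in $n\leq r\,2^{r^{0.5-\varepsilon}}$ and $d=O(r^{0.5-\varepsilon})$, one gets at most $2^{O(r^{1-2\varepsilon}\log_2^2 r)}$ isomorphism classes, which is comfortably less than $2^{r^{1-\varepsilon}}$ once $r$ is large (here the slack between $r^{1-2\varepsilon}\log_2^2r$ and $r^{1-\varepsilon}$ is what the hypothesis $\varepsilon<0.5$ buys us). One must also account for the number of ways a fixed abstract group $G$ can act on $R$ with the required regular subgroup, but that multiplies the count by only $|G|!\leq (r\,2^{r^{0.5-\varepsilon}})!$, which is again $2^{O(r^{1-\varepsilon}\log_2 r)}$ — still dominated; the cleanest route is simply to cite \cite[Lemma~3.2]{Morris-S2021} for the statement that the number of relevant overgroups $G$ (up to the appropriate equivalence) is at most $2^{r^{1-\varepsilon}}$ for large $r$.

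Combining the two pieces, I would write
\[
|\mathcal{Z}\setminus\mathcal{Y}_\varepsilon|\;\leq\;\sum_{G}\#\{S:G\leq\Aut(\Cay(R,S))\}\;\leq\;2^{r^{1-\varepsilon}}\cdot 2^{\mathbf{c}(R)-\frac{r}{96}}\;=\;2^{\mathbf{c}(R)-\frac{r}{96}+r^{1-\varepsilon}},
\]
which is the claimed bound. The only subtlety to be careful about is that $S\in\mathcal{Z}\setminus\mathcal{Y}_\varepsilon$ only guarantees the existence of \emph{some} overgroup $G(S)$ with $R$ maximal in $G(S)$ and $|G(S)_1|\leq 2^{r^{0.5-\varepsilon}}$; different $S$ may share the same $G$, and a given $S$ is counted once for each $G$ in the union bound, so the inequality is safe.

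I expect the main obstacle to be purely bookkeeping: making the dependence of the number of overgroups on $r$ and $\varepsilon$ explicit enough to see that the exponent $r^{1-\varepsilon}$ genuinely dominates all the logarithmic-factor contributions (from Lubotzky's counting, from the number of generators, and from the number of embeddings into $\Sym(R)$). Since \cite[Lemma~3.2]{Morris-S2021} already packages precisely this estimate, the honest proof is short — invoke that lemma for the overgroup count and Lemma~\ref{lem:GTS} for the per-overgroup count, then multiply — and the real work has been done elsewhere.
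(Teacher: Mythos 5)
Your proposal is correct and follows essentially the same route as the paper: bound the number of candidate overgroups $G$ (with $R$ maximal and $|G_1|\leq 2^{r^{0.5-\varepsilon}}$) by $2^{r^{1-\varepsilon}}$ via \cite[Lemma 3.2]{Morris-S2021}, bound the number of inverse-closed $S$ with $G\leq\Aut(\Cay(R,S))$ by $2^{\mathbf{c}(R)-r/96}$ via Lemma~\ref{lem:GTS}, and multiply. The extra heuristics about Lubotzky's count and embeddings are not needed since, as you note, the cited lemma already packages that estimate.
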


\begin{proof}
Let $\mathcal{G}_\varepsilon=\{G(S):S\in\mathcal{Z}\setminus\mathcal{Y}_\varepsilon\}$. Assume that $r$ is sufficiently large. Since
\[
\mathcal{G}_{\varepsilon}\subseteq\{G\leq \Sym(R): R \textrm{ is maximal in }G, \,|G_1|\leq 2^{r^{0.5-\varepsilon}}\},
\]
we derive from~\cite[Lemma 3.2]{Morris-S2021} that $|\mathcal{G}_{\varepsilon}|\leq2^{r^{1-\varepsilon}}$. This together with Lemma~\ref{lem:GTS} yields
\[
|\mathcal{Z}\setminus\mathcal{Y}_\varepsilon|\leq2^{\mathbf{c}(R)-\frac{r}{96}}\cdot|\mathcal{G}_{\varepsilon}|\leq2^{\mathbf{c}(R)-\frac{r}{96}+r^{1-\varepsilon}}.\qedhere
\]
\end{proof}

The following lemma is obtained by adapting the proof of~\cite[Theorem 3.4]{Morris-S2021}.

\begin{lemma}\label{lem:GlargeKlarge}
Let $R$ be a group of order $r$ that is neither abelian of exponent greater than $2$ nor generalized dicyclic, and let $\varepsilon\in (0,0.5)$. If $r$ is large enough, then
\[
|\mathcal{Y}_\varepsilon\setminus\mathcal{X}_\varepsilon|\leq2^{\mathbf{c}(R)-\frac{r}{96\log_2{r}}\log_2{e}+\log_2^2{r}+\log_2{r}-\log_2(96\log_2{r})}.
\]
\end{lemma}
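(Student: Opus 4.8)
The plan, modelled on the proof of \cite[Theorem 3.4]{Morris-S2021}, is to sort the sets $S\in\mathcal{Y}_\varepsilon\setminus\mathcal{X}_\varepsilon$ by their core $K=\Core_{G(S)}(R)$. First I would pin down the range of $K$. For such an $S$ put $G=G(S)$ and $K=\Core_G(R)$, so $K\trianglelefteq G$ and $K\le R$, hence $K\trianglelefteq R$; by definition of $\mathcal{X}_\varepsilon$ we have $|K|>96\log_2 r$. Moreover $K\ne R$ for $r$ large: if $R\trianglelefteq G$ then $C_G(R)\trianglelefteq G$ and $C_G(R)\le C_{\Sym(R)}(R)$, which is the left regular representation of $R$ and so has order $r$; since $G/C_G(R)$ embeds in $\Aut(R)$ and every finite group of order $r$ is $\log_2 r$-generated, $|\Aut(R)|\le r^{\log_2 r}$, whence $r\,|G_1|=|G|\le r\cdot r^{\log_2 r}$, contradicting $|G_1|>2^{r^{0.5-\varepsilon}}$ once $r^{0.5-\varepsilon}>\log_2^2 r$. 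Thus $K$ is a nontrivial proper normal subgroup of $R$, and the number of choices for $K$ is at most the number of subgroups of $R$, which (again by the $\log_2 r$-generation bound) is at most $2^{\log_2^2 r}$; this produces the $\log_2^2 r$ term.

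Next I would bound, for a \emph{fixed} normal $K$ with $96\log_2 r<|K|<r$, the number of $S$ with $\Core_{G(S)}(R)=K$. Passing to the quotient, $\overline G:=G/K$ is a faithful primitive permutation group on $G/R$, of degree $|G:R|=|G_1|>2^{r^{0.5-\varepsilon}}$, with point stabiliser $R/K$ of order $|R:K|=r/|K|<r/(96\log_2 r)$; so the degree is super-exponential in $r$ while the point stabiliser has order only polylogarithmic in the degree. As in \cite[Theorem 3.4]{Morris-S2021}, playing the O'Nan--Scott theorem (and Lemma~\ref{lem:typeex}, in the event that $R/K$ is abelian or generalised dicyclic) against this imbalance eliminates almost every O'Nan--Scott type and pins down the structure of $\overline G$, hence of $G$, tightly enough that the number of admissible permutation groups $G$ with this prescribed core can be bounded --- by the Lubotzky enumeration used in Lemma~\ref{lem:Gsmall} and \cite[Lemma 3.2]{Morris-S2021} --- by $2^{\frac{r}{96}-\frac{r}{96\log_2 r}\log_2 e+\log_2 r-\log_2(96\log_2 r)}$. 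Combining this with the ``in particular'' clause of Lemma~\ref{lem:GTS}, which bounds by $2^{\mathbf{c}(R)-r/96}$ the number of $S$ admitting any fixed such $G\le\Aut(\Cay(R,S))$, gives the per-core estimate $2^{\mathbf{c}(R)-\frac{r}{96\log_2 r}\log_2 e+\log_2 r-\log_2(96\log_2 r)}$. (In the subcase that $R/K$ is neither abelian of exponent greater than $2$ nor generalised dicyclic one may alternatively feed into Proposition~\ref{prop:ns-G1} the fact that $|S\cap Kx|$ is constant along the $\langle G_1K/K,\iota\rangle$-orbits on $R/K$, since $G_1$ fixes the vertex $1$ and stabilises the coset $K$; but, as noted below, that route alone is not uniform enough in $|K|$.) Summing the per-core estimate over the at most $2^{\log_2^2 r}$ admissible $K$ then yields
\[
|\mathcal{Y}_\varepsilon\setminus\mathcal{X}_\varepsilon|\ \le\ 2^{\log_2^2 r}\cdot 2^{\mathbf{c}(R)-\frac{r}{96\log_2 r}\log_2 e+\log_2 r-\log_2(96\log_2 r)}\ =\ 2^{\mathbf{c}(R)-\frac{r}{96\log_2 r}\log_2 e+\log_2^2 r+\log_2 r-\log_2(96\log_2 r)},
\]
which is exactly the claimed inequality.

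The main obstacle is the count, for a fixed core $K$, of the admissible overgroups $G$. One cannot simply enumerate them: $|G_1|$ is a priori as large as $(r-1)!$, so there are far too many candidate primitive groups $\overline G$ of that degree; and the estimates that work when the core is small --- Proposition~\ref{prop:ns-G1}, and Lemma~\ref{lem:STEP2} applied with $N=K$ --- decay to a useless constant savings as soon as $|R:K|$ is small. The leverage comes precisely from having \emph{both} $|G_1|>2^{r^{0.5-\varepsilon}}$ and $|K|>96\log_2 r$: together these force the degree of $\overline G$ past the $(|R:K|)!$-type thresholds while holding its point stabiliser polylogarithmic, and it is this rigidity --- transplanted from the proof of \cite[Theorem 3.4]{Morris-S2021} --- that drives the number of admissible $G$ strictly below the $2^{r/96}$ that Lemma~\ref{lem:GTS} can afford, by the required margin $2^{\frac{r}{96\log_2 r}\log_2 e}$.
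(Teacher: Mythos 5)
Your proposal follows essentially the same route as the paper: bound the number of admissible overgroups $G$ by sorting over the (at most $2^{\log_2^2 r}$) possible cores $K$ and deferring the per-core count to the proof of \cite[Theorem 3.4]{Morris-S2021}, then multiply by the per-group bound $2^{\mathbf{c}(R)-r/96}$ from Lemma~\ref{lem:GTS}; the resulting exponent matches the paper's exactly. One caveat on your gloss of the cited step: the per-core group count in \cite[Theorem 3.4]{Morris-S2021} is $k^{r/k}\,(r/k)!$ with $k=|K|$ (coming from the permutation-group structure attached to the semiregular normal subgroup $K$, not from O'Nan--Scott elimination or Lubotzky's enumeration, which only contribute $2^{O(\log_2^2 r)}$-type factors), and your stated exponent $\tfrac{r}{96}-\tfrac{r}{96\log_2 r}\log_2 e+\log_2 r-\log_2(96\log_2 r)$ is precisely $\log_2\bigl(k^{r/k}(r/k)!\bigr)$ estimated via $n!\leq n(n/e)^n$ for $k>96\log_2 r$ --- so the numbers are right even though the attributed mechanism is not.
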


\begin{proof}
Choose $r_\varepsilon\in\mathbb{N}$ such that $r^{0.5-\varepsilon}>\log_2^2{r}$ whenever $r\geq r_\varepsilon$, and assume $r\geq r_\varepsilon$ for the rest of the proof. Let $\mathcal{G}_\varepsilon=\{G(S):S\in\mathcal{Y}_\varepsilon\setminus\mathcal{X}_\varepsilon\}$. Since
\[
\mathcal{G}_{\varepsilon}\subseteq\{G\leq \Sym(R): R \textrm{ is maximal in }G,\,|G_1|>2^{r^{0.5-\varepsilon}},\,|\Core_G(R)|>96\log_2{r}\},
\]
one can read off from the proof of~\cite[Theorem 3.4]{Morris-S2021} that
\[
|\mathcal{G}_{\varepsilon}|\leq2^{\log_2^2{r}}\cdot\max\limits_{\substack{k\,|\,r\\k>96\log_2r}}\big(k^{r/k}(r/k)!\big).
\]
From the inequality $n!\leq n(n/e)^n$ we derive that, for $k>96\log_2{r}$,
\begin{align*}
\log_2\big(k^{r/k}(r/k)!\big)&\leq\frac{r}{k}\log_2{k}+\log_2\left(\frac{r}{k}\right)+\frac{r}{k}\log_2\left(\frac{r}{ek}\right)\\
&=\log_2\left(\frac{r}{k}\right)+\frac{r}{k}\log_2\left(k\cdot\frac{r}{ek}\right)\\
&=\log_2\left(\frac{r}{k}\right)+\frac{r}{k}(\log_2{r}-\log_2{e})\\
&<\log_2\left(\frac{r}{96\log_2r}\right)+\frac{r}{96\log_2r}(\log_2{r}-\log_2{e})\\
&=\log_2{r}-\log_2(96\log_2{r})+\frac{r}{96}-\frac{r}{96\log_2{r}}\log_2{e}.
\end{align*}
It follows that
\begin{align*}
|\mathcal{G}_{\varepsilon}|\leq2^{\log_2^2{r}+\log_2{r}-\log_2(96\log_2{r})+\frac{r}{96}-\frac{r}{96\log_2{r}}\log_2{e}}.
\end{align*}
Then by Lemma~\ref{lem:GTS}, we obtain
\begin{align*}
|\mathcal{Y}_\varepsilon\setminus\mathcal{X}_\varepsilon|&\leq2^{\mathbf{c}(R)-\frac{r}{96}}\cdot2^{\log_2^2r+\log_2r-\log_2(96\log_2r)+\frac{r}{96}-\frac{r}{96\log_2r}\log_2e}\\
&=2^{\mathbf{c}(R)-\frac{r}{96\log_2{r}}\log_2{e}+\log_2^2{r}+\log_2{r}-\log_2(96\log_2{r})}.\qedhere
\end{align*}
\end{proof}


Recall the notation introduced before Lemma~\ref{lem:Gsmall}. In particular,
\begin{equation*}
\begin{split}
\mathcal{W}=\mathcal{W}_{0.001}=\{S\in\mathcal{Z}:{}&|G(S)_1|>2^{r^{0.499}},\,|\Core_{G(S)}(R)|\leq96\log_2r,\\
&\text{every orbit of }\Core_{G(S)}(R)\text{ is stabilized by }G(S)_1\}.
\end{split}
\end{equation*}

\begin{lemma}\label{lem:WR}
Let $R$ be a finite group of order $r$ that is neither abelian of exponent greater than $2$ nor generalized dicyclic. For large enough $r$ we have $|\mathcal{W}|\leq2^{\mathbf{c}(R)-\frac{r}{18432\log_2{r}}+2\log_2^2{r}+3}$.
\end{lemma}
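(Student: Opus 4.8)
The plan is to bound $|\mathcal{W}|$ by partitioning $\mathcal{W}$ according to the structure of $\Core_{G(S)}(R)$ and then applying the machinery of Section~\ref{Sec3}. So let $S\in\mathcal{W}$, abbreviate $G=G(S)$, $K=\Core_G(R)$, and $G_1$ the stabilizer of $1$. By definition of $\mathcal{W}$ we have $1<|K|\le 96\log_2 r$ (note $K\ne 1$: if $K=1$ then $G$ is primitive, and since $R$ is neither abelian of exponent $>2$ nor generalized dicyclic and $G_1\cong R/K=R$ would be a point stabilizer of a primitive group, Lemma~\ref{lem:typeex} would force type $\mathrm{HA}$, which is incompatible with $|G_1|>2^{r^{0.499}}$ being regular on a set of size $|G{:}R|$; more simply, if $K=1$ then one argues directly — I will need to isolate why $K\ne1$ cleanly, perhaps just by noting $\mathcal{W}$ as written allows $K=1$ and handling that case via Proposition~\ref{prop:ns-nonnorm} applied to $Q=1$ is vacuous, so this needs care). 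Assume henceforth $K\ne 1$, so $K$ is a nontrivial normal subgroup of $G$ contained in $R$, hence a nontrivial normal subgroup of $R$, with $|K|\le 96\log_2 r$.

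The core of the argument: since every orbit of $K$ is stabilized by $G_1$ (this is exactly the defining condition of $\mathcal{W}$), and since $K\trianglelefteq G$ so the $K$-orbits on $R$ form a block system coinciding with the right $K$-cosets of $R$, we know $G_1$ stabilizes each right $K$-coset setwise. Consider the induced action of $G/K$ on $R/K$: it properly contains the right regular representation of $R/K$ (properness because $|G_1|>2^{r^{0.499}}$ is too large to be absorbed once $|K|\le 96\log_2r$ — quantitatively $|G/K|>|R/K|$ since $|G|=|R||G_1|>|R||K|\ge|R|\cdot$ ... actually $|G/K|=|R/K|\cdot|G_1|>|R/K|$), and its point stabilizer at $\overline 1$ is $H=G_1K/K$. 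By Lemma~\ref{lem:orbit-equitable} applied with the group $G\le\Aut(\Cay(R,S))$ and the orbit partition: for any two $G_1$-orbits $B,C$ on $R$ (hence contained in the $K$-coset block system appropriately), vertices in $B$ have a constant number of $S$-neighbours in $C$. The key deduction I want is: $S$ intersects $\widetilde{\Delta}$ evenly for every $H$-orbit $\Delta$ on $R/K$. Indeed an $H$-orbit $\Delta$ lifts to $\widetilde{\Delta}$, a union of right $K$-cosets $\Lambda_1,\dots,\Lambda_b$ permuted transitively by $G_1$ (since $H=G_1K/K$ permutes $\Delta$ transitively and $G_1$ fixes $\overline 1$-cosets... here I must check that $G_1$, not just $G_1K$, moves the cosets transitively — this follows because $K$ fixes each coset setwise so $G_1K$ and $G_1$ have the same permutation action on $R/K$); then transitivity of $G_1$ on $\{\Lambda_1,\dots,\Lambda_b\}$ together with $1\in\Lambda_1$ gives, via $G_1$-equivariance and $1$ being fixed, that $|S\cap\Lambda_i|=|S\cap\Lambda_j|$ for all $i,j$. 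So $S\in\mathcal{L}$ in the sense of Proposition~\ref{prop:ns-G1}, provided $R/K$ is neither abelian of exponent $>2$ nor generalized dicyclic.

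This last proviso is the main obstacle and must be dispatched separately: I need to rule out, or separately bound the contribution of, those $S\in\mathcal{W}$ for which $R/K$ IS abelian of exponent $>2$ or generalized dicyclic. Here I would use that $|K|\le 96\log_2 r$, so $|R/K|\ge r/(96\log_2 r)$ is still large, and invoke the structural results cited in the introduction for such quotients — or more directly, argue that in either bad case $G/K$ acting on $R/K$ with point stabilizer $H\cong$ (a section of $G_1$, still huge) forces, through Lemma~\ref{lem:typeex} applied after a further reduction to a primitive quotient, an impossible constraint; the cleanest route may be to note that $\mathcal W$ already restricts to $S\in\mathcal Z$ whose $\Cay(R,S)$ is not a GRR, and handle the abelian/generalized-dicyclic-quotient subcase by the analogue of Proposition~\ref{prop:ns-nonnorm} or by citing Dobson--Spiga--Verret / Morris--Spiga--Verret as in the introduction — I expect the paper does this via a companion lemma. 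Granting that the bad-quotient $S$ contribute at most, say, $2^{\mathbf{c}(R)-\frac{r}{18432\log_2 r}+2\log_2^2 r+2}$ (absorbed into the final bound), the remaining $S\in\mathcal W$ satisfy the hypotheses of Proposition~\ref{prop:ns-G1}, which gives $|\mathcal{L}|\le 2^{\mathbf{c}(R)-\frac{1}{96}|R/K|}\le 2^{\mathbf{c}(R)-\frac{r}{96\cdot 96\log_2 r}}=2^{\mathbf{c}(R)-\frac{r}{9216\log_2 r}}$. Finally, since $G(S)$ was chosen per $S$, I sum over the possible choices: each $S$ determines (its chosen) $G$, hence $K$ and the $H$-orbit structure, and the number of relevant group-theoretic data is at most $2^{\log_2^2 r}$-ish by the Lubotzky-type counting used in Lemma~\ref{lem:GlargeKlarge}; more simply, the set $\mathcal L$ above depends only on $(K,H)$ and $S\mapsto$ (its data) lands in a family of size at most $2^{2\log_2^2 r}$ (crude bound, absorbing the count of subgroups $K$ of bounded order and of overgroups), so $|\mathcal W|\le 2^{2\log_2^2 r}\cdot 2^{\mathbf{c}(R)-\frac{r}{9216\log_2 r}}$. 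Combining with the bad-quotient contribution and using $\frac{1}{2}\cdot\frac{r}{9216\log_2 r}=\frac{r}{18432\log_2 r}$ to keep a comfortable margin yields $|\mathcal W|\le 2^{\mathbf{c}(R)-\frac{r}{18432\log_2 r}+2\log_2^2 r+3}$ for $r$ large enough, as required.
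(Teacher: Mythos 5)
There is a genuine gap, and in fact the central step of your plan cannot work. For $S\in\mathcal{W}$ the defining condition is that $G(S)_1$ stabilizes \emph{every} orbit of $K=\Core_{G(S)}(R)$, i.e.\ every right $K$-coset of $R$. Hence the induced action of $G_1$ (and so of $H=G_1K/K$) on $R/K$ is \emph{trivial}: every $H$-orbit on $R/K$ is a singleton. You state this yourself (``we know $G_1$ stabilizes each right $K$-coset setwise'') and then, two lines later, assert that an $H$-orbit lifts to a union of $b$ right $K$-cosets ``permuted transitively by $G_1$'' --- these two statements together force $b=1$. Consequently the condition ``$S$ intersects $\widetilde{\Delta}$ evenly for each $H$-orbit $\Delta$'' is vacuous, the set $\mathcal{L}$ of Proposition~\ref{prop:ns-G1} is the set of \emph{all} inverse-closed subsets of $R$, and no saving of the form $2^{-|R/K|/96}$ can be extracted: the proof of Proposition~\ref{prop:ns-G1} rests on Lemma~\ref{lem:GTS}, which needs the induced permutation group on $R/K$ to properly contain the regular subgroup $R/K$, whereas here the induced group \emph{is} regular. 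The set $\mathcal{W}$ is precisely the case in which the quotient/even-intersection machinery of Section~\ref{Sec3} gives nothing; that machinery is reserved for the complementary set $\mathcal{T}$, where some $K$-orbit is moved by $G_1$.

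The paper instead bounds $|\mathcal{W}|$ with Lemma~\ref{lem:STEP2}: since $K\trianglelefteq G\leq\N_{\Aut(\Cay(R,S))}(K)$ and $G_1\neq 1$, any non-identity $g\in G_1$ normalizes $K$, fixes the vertex $1$ and stabilizes each $K$-orbit, so $S\in\mathcal{M}_K$ in the notation of that lemma (one checks $K\neq 1$, since otherwise the $K$-orbits are singletons and $G_1$ would fix every vertex, and $K\neq R$ since $|K|\leq 96\log_2 r<r$). Summing the bound $2^{\mathbf{c}(R)-\frac{r}{192|N|}+\log_2^2 r+3}\leq 2^{\mathbf{c}(R)-\frac{r}{18432\log_2 r}+\log_2^2 r+3}$ over the at most $2^{\log_2^2 r}$ candidate normal subgroups $N$ gives exactly the stated estimate. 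Your secondary worry (the quotient $R/K$ being abelian of exponent greater than $2$ or generalized dicyclic), which you leave unresolved by ``granting'' a bound, becomes moot once the argument is routed through Lemma~\ref{lem:STEP2}, whose hypothesis is on $R$ itself rather than on $R/K$.
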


\begin{proof}
As $r$ is large enough, we have $r>96\log_2r$.
Let $\mathcal{N}$ be the set of nontrivial proper normal subgroups of $R$ of order at most $96\log_2{r}$. For $N\in\mathcal{N}$, we denote by $\mathcal{M}_N$ the set of inverse-closed subsets $S$ of $R$ such that there exists a non-identity $g\in\N_{\Aut(\Cay(R,S))}(N)$ that fixes the vertex $1$ and stabilizes each $N$-orbit. According to Lemma~\ref{lem:STEP2},
\begin{equation}\label{MN}
|\mathcal{M}_N|\leq2^{\mathbf{c}(R)-\frac{r}{192|N|}+\log_2^2{r}+3}.
\end{equation}

Take an arbitrary $S\in\mathcal{W}$. Let $G=G(S)$ and $K=\Core_{G(S)}(R)$. Since $G_1>1$ stabilizes every orbit of $K$ (because $S\in\mathcal{W}$), it follows that $K\neq1$. Moreover, it follows from $|R|=r>96\log_2r\geq|K|$ that $K\neq R$ and so $K\in\mathcal{N}$.
Since $K\trianglelefteq G$, we have $G\leq\N_{\Aut(\Cay(R,S))}(K)$. Consequently, any non-identity element of $G_1$ lies in $\N_{\Aut(\Cay(R,S))}(K)$ and stabilizes every orbit of $K$. In other words, $S\in\mathcal{M}_{K}$. This leads to
\begin{equation}\label{unionofMN}
\mathcal{W}\subseteq\bigcup_{N\in\mathcal{N}}\mathcal{M}_N.
\end{equation}

Since every subgroup of $R$ has a generating set of size at most $\log_2{r}$, we have at most $r^{\log_2{r}}$ choices for a subgroup $N$ of $R$. Combining this with~\eqref{MN} and~\eqref{unionofMN}, we conclude that
\begin{align*}
|\mathcal{W}|&\leq r^{\log_2{r}}\cdot\max_{N\in\mathcal{N}}\left\{2^{\mathbf{c}(R)-\frac{r}{192|N|}+\log_2^2{r}+3}\right\}\\
&\leq2^{\log_2^2{r}}\cdot2^{\mathbf{c}(R)-\frac{r}{192\cdot96\log_2{r}}+\log_2^2{r}+3}=2^{\mathbf{c}(R)-\frac{r}{18432\log_2{r}}+2\log_2^2{r}+3}.\qedhere
\end{align*}
\end{proof}

Combining Lemmas~\ref{lem:Gsmall}, \ref{lem:GlargeKlarge} and~\ref{lem:WR}, we obtain the following result.

\begin{proposition}\label{prop:ZtoT}
Let $R$ be a finite group of order $r$ that is neither abelian of exponent greater than $2$ nor generalized dicyclic. For large enough $r$ we have $|\mathcal{Z}\setminus\mathcal{T}|\leq2^{\mathbf{c}(R)-\frac{r}{18432\log_2{r}}+2\log_2^2{r}+5}$.
\end{proposition}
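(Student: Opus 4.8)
The plan is to read the proposition off as a three-way union bound, since $\mathcal{Z}\setminus\mathcal{T}$ is exactly covered by the three sets whose cardinalities were estimated in Lemmas~\ref{lem:Gsmall}, \ref{lem:GlargeKlarge} and~\ref{lem:WR}. Fixing $\varepsilon=0.001$ throughout (the value that the subscript convention suppresses), one has the chain $\mathcal{T}\subseteq\mathcal{X}\subseteq\mathcal{Y}\subseteq\mathcal{Z}$, and $\mathcal{W}$ and $\mathcal{T}$ partition $\mathcal{X}$ directly from their definitions: an $S\in\mathcal{X}$ lies in $\mathcal{W}$ or in $\mathcal{T}$ according to whether every orbit of $\Core_{G(S)}(R)$ is stabilized by $G(S)_1$ or not. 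Hence
\[
\mathcal{Z}\setminus\mathcal{T}=(\mathcal{Z}\setminus\mathcal{Y})\cup(\mathcal{Y}\setminus\mathcal{X})\cup(\mathcal{X}\setminus\mathcal{T})=(\mathcal{Z}\setminus\mathcal{Y})\cup(\mathcal{Y}\setminus\mathcal{X})\cup\mathcal{W},
\]
and I would bound $|\mathcal{Z}\setminus\mathcal{T}|$ by the sum of the three sizes.

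Next I would substitute the three estimates. Lemma~\ref{lem:Gsmall} gives $|\mathcal{Z}\setminus\mathcal{Y}|\le 2^{\mathbf{c}(R)-r/96+r^{0.999}}$; Lemma~\ref{lem:GlargeKlarge} gives $|\mathcal{Y}\setminus\mathcal{X}|\le 2^{\mathbf{c}(R)-\frac{r\log_2 e}{96\log_2 r}+\log_2^2 r+\log_2 r-\log_2(96\log_2 r)}$; and Lemma~\ref{lem:WR} gives $|\mathcal{W}|\le 2^{\mathbf{c}(R)-\frac{r}{18432\log_2 r}+2\log_2^2 r+3}$. Writing $E=\mathbf{c}(R)-\frac{r}{18432\log_2 r}+2\log_2^2 r+3$ for the last exponent, I would then verify that, once $r$ is large enough, the first two exponents are also at most $E$. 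For the second this is algebra: $96\log_2 e<96\cdot1.5<18432$ yields $\frac{r\log_2 e}{96\log_2 r}\ge\frac{r}{18432\log_2 r}$, while $\log_2 r-\log_2(96\log_2 r)\le\log_2 r\le\log_2^2 r\le\log_2^2 r+3$ for $r\ge 2$. For the first, since $\frac{r}{96}-\frac{r}{18432\log_2 r}\ge\frac{r}{192}$ for large $r$ and $r^{0.999}=o(r)$, the inequality $-\frac{r}{96}+r^{0.999}\le-\frac{r}{18432\log_2 r}+2\log_2^2 r+3$ holds for all large $r$. Then $|\mathcal{Z}\setminus\mathcal{T}|\le 3\cdot 2^{E}<2^{E+2}$, which is exactly the claimed bound $2^{\mathbf{c}(R)-\frac{r}{18432\log_2 r}+2\log_2^2 r+5}$.

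I do not expect a genuine obstacle: the proposition is a bookkeeping consolidation of the three preceding lemmas, and the only points needing care are (i) getting the set-theoretic identity right — in particular that $\mathcal{X}\setminus\mathcal{T}=\mathcal{W}$, so that only $\mathcal{W}$ (and not $\mathcal{X}$ itself) appears — and (ii) confirming that the loss $\frac{r}{18432\log_2 r}$ from Lemma~\ref{lem:WR} is the smallest of the three losses, so it governs the final exponent while the coarser losses $\Theta(r)$ and $\Theta(r/\log r)$ of the other two pieces are harmlessly absorbed. The threshold ``$r$ large enough'' should simply be taken as the maximum of the thresholds required by the three lemmas and by the two elementary inequalities above; each being finite, this causes no difficulty.
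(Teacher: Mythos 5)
Your proposal is correct and matches the paper's own proof essentially verbatim: the same decomposition $\mathcal{Z}\setminus\mathcal{T}=(\mathcal{Z}\setminus\mathcal{Y})\cup(\mathcal{Y}\setminus\mathcal{X})\cup\mathcal{W}$, the same three lemmas, and the same final step $3\cdot 2^{E}<2^{E+2}$ after observing that the $\mathcal{W}$-bound dominates for large $r$. (Only a cosmetic quibble: to compare the second exponent you want $96/\log_2 e\leq 18432$, which holds trivially since $\log_2 e>1$, rather than the inequality $96\log_2 e<18432$ as you phrased it.)
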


\begin{proof}
As $r$ is large enough, we see from Lemmas~\ref{lem:Gsmall}, \ref{lem:GlargeKlarge} and~\ref{lem:WR} that
\begin{align*}
|\mathcal{Z}\setminus\mathcal{Y}|\leq a_1\ &\text{ with }\ a_1=2^{\mathbf{c}(R)-\frac{r}{96}+r^{0.999}},\\
|\mathcal{Y}\setminus\mathcal{X}|\leq a_2\ &\text{ with }\ a_2=2^{\mathbf{c}(R)-\frac{r}{96\log_2{r}}\log_2{e}+\log_2^2{r}+\log_2{r}-\log_2(96\log_2{r})},\\
|\mathcal{W}|\leq a_3\ &\text{ with }\ a_3=2^{\mathbf{c}(R)-\frac{r}{18432\log_2{r}}+2\log_2^2{r}+3}.
\end{align*}
Observe that $a_3=\max\{a_1,a_2,a_3\}$ for large enough $r$. It follows that
\[
|\mathcal{Z}\setminus\mathcal{T}|=|(\mathcal{Z}\setminus\mathcal{Y})\cup(\mathcal{Y}\setminus\mathcal{X})\cup\mathcal{W}|
\leq|\mathcal{Z}\setminus\mathcal{Y}|+|\mathcal{Y}\setminus\mathcal{X}|+|\mathcal{W}|\leq3a_3<2^2a_3,
\]
as the proposition asserts.
\end{proof}

\section{Estimate $|\mathcal{T}|$}\label{sec:4}

By Proposition~\ref{prop:ZtoT} we have reduced the estimation of $|\mathcal{Z}|$ to that of $|\mathcal{T}|$. Recall that
\begin{equation*}
\begin{split}
\mathcal{T}=\{S\in\mathcal{Z}:{}&|G(S)_1|>2^{r^{0.499}},\,|\Core_{G(S)}(R)|\leq96\log_2r,\\
&\text{some orbit of }\Core_{G(S)}(R)\text{ is not stabilized by }G(S)_1\}.
\end{split}
\end{equation*}
For each $S\in\mathcal{T}$, let $K(S)=\Core_{G(S)}(R)$. It is known from the discussion in the previous section that $G(S)/K(S)$ is a primitive permutation group on $G(S)/R$ with point stabilizer $R/K(S)$ and a regular subgroup $G(S)_1K(S)/K(S)$.
For each $\mathcal{C}\in\{\textrm{HA, HS, HC, SD, CD, TW, AS, PA}\}$, let $\mathcal{T}_{\mathcal{C}}$ be the set of elements $S\in\mathcal{T}$ such that $G(S)/K(S)$ is of type $\mathcal{C}$ in its primitive action on the set of right cosets of $R$ in $G(S)$. Then
\begin{equation}\label{TR-division}
\mathcal{T}=\mathcal{T}_{\mathrm{HA}}\cup\mathcal{T}_{\mathrm{HS}}\cup\mathcal{T}_{\mathrm{HC}}\cup\mathcal{T}_{\mathrm{SD}}\cup\mathcal{T}_{\mathrm{CD}}\cup\mathcal{T}_{\mathrm{TW}}\cup\mathcal{T}_{\mathrm{AS}}\cup\mathcal{T}_{\mathrm{PA}}.
\end{equation}
Note that there are two actions of $G(S)$, namely the primitive action on the set $G(S)/R$ of right cosets of $R$ in $G(S)$ and the transitive action on the vertex set $R$ of the Cayley graph $\Cay(R,S)$. In the following, we will always emphasize the primitivity when referring to the first action, and the reader should keep in mind which action we are considering.

\begin{lemma}\label{lem:Q}
Suppose that $r$ is large enough. For each $S\in\mathcal{T}_{\mathrm{HA}}\cup\mathcal{T}_{\mathrm{SD}}\cup\mathcal{T}_{\mathrm{TW}}$, there exists a non-normal subgroup $Q$ of $R$ with $|Q|<r^{0.501}\log_2^3{r}$ such that the right $Q$-cosets in $R$ are precisely the orbits of some subgroup of $\Aut(\Cay(R,S))$.
\end{lemma}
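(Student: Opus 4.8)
The plan is to find a normal subgroup $L$ of $G:=G(S)$ whose orbit $Q:=1^L$ on the vertex set $R$ is non-normal in $R$ and satisfies $|Q|<r^{0.501}\log_2^3 r$; everything else then comes for free. Indeed, writing $K=K(S)=\Core_G(R)$, the group $R$ is maximal in $G$, so Lemma~\ref{lem:normalorbit} applies: for any $L\trianglelefteq G$ the orbit $1^L$ is a subgroup of $R$, the $L$-orbits on $R$ are exactly the right $1^L$-cosets, and moreover, if $1^L\trianglelefteq R$ then either $L\le R$ or $G_1$ stabilizes every $L$-orbit. Since $L\le G\le\Aut(\Cay(R,S))$, this already realizes the right $Q$-cosets as the orbits of a subgroup of $\Aut(\Cay(R,S))$, so the whole problem reduces to an appropriate choice of $L$.

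For the choice of $L$ and the size estimate I would work in the primitive quotient $\overline G:=G/K$ acting on $G/R$, with point stabilizer $\overline R:=R/K$ (of order $r/|K|\le r$), regular subgroup $\overline{G_1}:=G_1K/K$ (of order $|G_1|>2^{r^{0.499}}$), and socle $\overline N:=\Soc(\overline G)$, which for each of the types $\mathrm{HA}$, $\mathrm{SD}$, $\mathrm{TW}$ is the unique minimal normal subgroup. Since $\overline R$ is maximal and core-free in $\overline G$ we have $\overline N\,\overline R=\overline G$, hence $|\overline N:\overline N\cap\overline R|=|G_1|$. The key point is that $\overline N\cap\overline R$ is small: it is trivial for $\mathrm{HA}$ and $\mathrm{TW}$, and for $\mathrm{SD}$, with $\overline N=T^k$, it is a full diagonal subgroup isomorphic to $T$, so that $|\overline R|\ge|T|\cdot k$ because $\overline G$ permutes the $k$ factors transitively; combined with $|T|^{k-1}=|G_1|>2^{r^{0.499}}$ and $|T|\,k\le r$ this forces $k>r^{0.499}/\log_2 r$ and therefore $|T|\le r/k<r^{0.501}\log_2 r$. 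Taking $Q$ to be $1^L$ for $L$ the preimage of $\overline N$ (or, in the regular-socle cases, a normal subgroup extracted from the block system of right $K$-cosets, on which $\overline G$ acts with $\overline R$ regular), one bounds $|Q|$ by $|K|\cdot|\overline N\cap\overline R|$ up to the controlled slack between $\overline N$ and $\overline{G_1}$, which works out to $|Q|<r^{0.501}\log_2^3 r$ once $r$ is large, using $|K|\le96\log_2 r$.

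Finally, the non-normality of $Q$ is where the defining property of $\mathcal T$ — that some $K$-orbit on $R$ is not stabilized by $G_1$ — enters. Choosing $L$ with $L\not\le R$ (equivalently $L\not\le K=\Core_G(R)$), if $Q=1^L$ were normal in $R$ then Lemma~\ref{lem:normalorbit} would force $G_1$ to stabilize every $L$-orbit; one then has to propagate this down to the $K$-orbits — using that $K\le L$ and that the $L$-orbits break into $K$-orbits in a way dictated by the diagonal-type structure of $\overline N$ — to conclude that $G_1$ stabilizes every $K$-orbit, contradicting $S\in\mathcal T$. I expect the main obstacle to be exactly this: arranging the size bound and the non-normality argument to hold simultaneously for one choice of $L$, which forces a case distinction between the regular-socle types $\mathrm{HA}$, $\mathrm{TW}$ (where the naive preimage gives $Q=K$, which is normal, so one must instead exploit the block system of right $K$-cosets together with the two regular subgroups $\overline R$ and $\overline{G_1}$ of $\overline G$) and the type $\mathrm{SD}$ (where the diagonal subgroup directly does the job).
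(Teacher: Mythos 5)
Your skeleton matches the paper's: pass to $\overline G=G/K$, take $N$ to be the preimage of $\Soc(\overline G)$, let $Q=1^N$, and use Lemma~\ref{lem:normalorbit} to realize the right $Q$-cosets as the orbits of a subgroup of $\Aut(\Cay(R,S))$. But both of the substantive steps have genuine gaps.

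First, the size bound. The orbit $\overline Q=\overline1^{\overline N}$ is computed in the action of $\overline G$ on $R/K$, whose point stabilizer is $H=G_1K/K$, so $|\overline Q|=|\overline N:\overline N\cap H|$. The quantity you work with, $\overline N\cap\overline R$, is the point stabilizer in the \emph{other} action (the primitive one on $G/R$); since $\overline N$ is transitive there, $|\overline N|=|G_1|\cdot|\overline N\cap\overline R|$ and $|H|=|G_1|$, whence $|\overline Q|=|G_1|\cdot|\overline N\cap\overline R|/|\overline N\cap H|\geq|\overline N\cap\overline R|$. So $|\overline N\cap\overline R|$ is a \emph{lower} bound for $|\overline Q|$, not an upper bound: in the HA and TW cases it is trivial, yet $K<Q$ always holds. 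The real content of the upper bound is showing that the regular subgroup $H$ of $\overline G$ (of order $>2^{r^{0.499}}$) meets the socle in a large subgroup, so that $|\overline N:\overline N\cap H|\leq|R/K|^{0.501}\log_2^2|R/K|$; this is exactly \cite[Proposition~5.7]{Morris-S2021}, which the paper invokes and which your ``controlled slack between $\overline N$ and $\overline{G_1}$'' does not supply. (Your SD computation bounds $|T|=|\overline N\cap\overline R|$, which again only bounds $|\overline Q|$ from below.)

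Second, the non-normality. Your plan is: if $Q\trianglelefteq R$ and $N\nleq R$, then Lemma~\ref{lem:normalorbit} makes $G_1$ stabilize every $N$-orbit, i.e.\ every right $Q$-coset, and you hope to ``propagate'' this to the $K$-orbits to contradict $S\in\mathcal T$. That propagation fails: the right $K$-cosets refine the right $Q$-cosets, and stabilizing each block of the coarser partition does not prevent $G_1$ from permuting the $K$-cosets inside a $Q$-coset. The paper's route is different: assuming $Q\trianglelefteq R$, one checks directly that $N\leq\N_G(Q)$ (using that the $N$-orbits are the right $Q$-cosets and form a block system), so $\N_G(Q)\neq R$ since $N\nleq R$; maximality of $R$ then forces $\N_G(Q)=G$, i.e.\ $Q\trianglelefteq G$, whence $Q\leq\Core_G(R)=K$, contradicting $K<Q$. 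Note also that the hypothesis $S\in\mathcal T$ is used elsewhere in the paper's proof, namely via Lemma~\ref{lem:normalorbit} to show that the kernel of the action of $G$ on $R/K$ is exactly $K$, so that $\overline G$ acts faithfully on $R/K$ before the socle is introduced; your write-up skips this step as well.
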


\begin{proof}
Take an arbitrary $S\in\mathcal{T}_{\mathrm{HA}}\cup\mathcal{T}_{\mathrm{SD}}\cup\mathcal{T}_{\mathrm{TW}}$. Let $G=G(S)$, let $K=K(S)=\Core_G(R)$, and let $\overline{\phantom{u}}\colon G\to G/K$ be the quotient modulo $K$. Since $K$ is normal in $G$, the set $R/K$ of right $K$-cosets in $R$ forms a block system of $G$. Let $L$ be the kernel of the induced action of $G$ on $R/K$. Then the $L$-orbits are precisely the $K$-orbits, and hence some $L$-orbit is not stabilized by $G_1$ as $S\in\mathcal{T}$. Thereby we obtain from Lemma~\ref{lem:normalorbit} that $L\leq R$. It follows that $L=K$, and so $\overline{G}=G/K$ acts faithfully on $R/K$.

Let $N$ be the full preimage of $\Soc(\overline{G})$ under $\overline{\phantom{u}}$. Clearly, we have $K\leq N$.
Since $G$ is transitive on the vertex set of $\Cay(R,S)$ and $N$ is normal in $G$, the $N$-orbits on the vertices of $\Cay(R,S)$ form a block system of $G$.
Let $Q=1^N$ be the $N$-orbit containing the vertex $1$. Then $K=1^K\subseteq1^N=Q$. Since $R$ acts regularly on the vertex set of $\Cay(R,S)$ by right multiplication, we conclude that $Q$ is a subgroup $R$. Hence the block system formed by the $N$-orbits coincides with the set of right $Q$-cosets of $R$.

If $K=Q$, then every $N$-orbit has the form $x^N=Qx=Kx=xK=x^K$ and thus is a $K$-orbit. This would imply that $N$ stabilizes every $K$-orbit, that is, $\overline{N}$ is trivial, which is impossible as $\overline{N}=\Soc(\overline{G})$. Thus we have $K<Q$.
If $N\leq R$, then $\Soc(\overline{G})=\overline{N}\leq\overline{R}$, which is not possible as $\overline{R}$ is the point stabilizer of $\overline{G}$ acting primitively on $G/R$. Therefore, $N\nleq R$.

Suppose that $Q$ is normal in $R$. Since $R$ is maximal in $G$, we conclude that either $\N_{G}(Q)=R$ or $\N_{G}(Q)=G$. For $g\in N$ and $x\in R$, we have
\[
x^{gQ}=(x^g)^Q=x^gQ=Qx^g\ \text{ and }\ x^{Qg}=(x^{Q})^{g}=(xQ)^g=(Qx)^g.
\]
Recall that the $N$-orbits on $R$, which are the right $Q$-cosets in $R$, form a block system of $G$. Since $(Qx)^g$ is a block in this block system that contains $x^g$, it follows that $(Qx)^g=Qx^g$ and hence $x^{gQ}=x^{Qg}$. This means that $gQ=Qg$ for all $g\in N$, that is, $N\leq\N_G(Q)$. Since $N\nleq R$, it follows that  $\N_{G}(Q)\neq R$. This leads to $\N_{G}(Q)=G$, contradicting $K<Q$.

Thus $Q$ is non-normal in $R$. Denote the identity element of $R/K$ by $\overline{1}$. We have
\[
\overline{1}^{\overline{N}}=\overline{K^N}=\overline{(1^K)^N}=\overline{1^{KN}}=\overline{1^N}=\overline{Q}.
\]
This together with~\cite[Proposition 5.7]{Morris-S2021} gives $|\overline{Q}|\leq|R/K|^{0.501}\log_2^2{|R/K|}$, which yields
\[
|Q|=|K|\cdot|\overline{Q}|\leq |K|\cdot\left(\frac{r}{|K|}\right)^{0.501}\log_2^2{\left(\frac{r}{|K|}\right)}\leq r^{0.501}|K|^{0.499}\log_2^2{r}.
\]
Since $|K|\leq 96\log_2r$ and $r$ is large enough, it follows that
\[
|Q|\leq r^{0.501}(96\log_2r)^{0.499}\log_2^2{r}<r^{0.501}\log_2^3{r},
\]
completing the proof.
\end{proof}

\begin{lemma}\label{lem:HASDTW}
For large enough $r$, we have $|\mathcal{T}_{\mathrm{HA}}\cup\mathcal{T}_{\mathrm{SD}}\cup\mathcal{T}_{\mathrm{TW}}|\leq 2^{\mathbf{c}(R)-\frac{r^{0.499}}{8\log_2^3{r}}+\log_2^2{r}}$.
\end{lemma}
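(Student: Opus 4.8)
\textbf{Proof plan for Lemma~\ref{lem:HASDTW}.}
The plan is to reduce to Proposition~\ref{prop:ns-nonnorm} by means of the non-normal subgroup $Q$ furnished by Lemma~\ref{lem:Q}, and then to take a union bound over the possible choices of $Q$. Fix $S\in\mathcal{T}_{\mathrm{HA}}\cup\mathcal{T}_{\mathrm{SD}}\cup\mathcal{T}_{\mathrm{TW}}$. By Lemma~\ref{lem:Q} there is a non-normal subgroup $Q$ of $R$ with $|Q|<r^{0.501}\log_2^3{r}$ and a subgroup $H\leq\Aut(\Cay(R,S))$ whose orbits on the vertex set $R$ are precisely the right $Q$-cosets. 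The crux of the argument is to deduce from this that $S$ intersects every double coset $\Delta\in Q\backslash R/Q$ evenly, so that $S$ lies in the set $\mathcal{L}$ appearing in Proposition~\ref{prop:ns-nonnorm}.

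To establish this, I would use that in $\Cay(R,S)$ the neighbourhood of a vertex $u$ equals $Su$ (since $S=S^{-1}$), together with Lemma~\ref{lem:orbit-equitable}, which says the partition of $R$ into $H$-orbits is equitable. Fix a double coset $\Delta=QxQ$ and let $\Lambda_1,\dots,\Lambda_b$ be the right $Q$-cosets inside it. Applying Lemma~\ref{lem:orbit-equitable} to the $H$-orbits $B=Q$ (the one containing $1$) and $C=\Lambda_j$, every vertex $u\in Q$ satisfies $|Su\cap\Lambda_j|=|S\cap\Lambda_j u^{-1}|=m_j$ for some constant $m_j$. Now $\Delta q'^{-1}=\Delta$ for $q'\in Q$, and as $u$ runs over $Q$ the coset $\Lambda_j u^{-1}$ runs over \emph{all} of $\Lambda_1,\dots,\Lambda_b$; hence $|S\cap\Lambda_i|=m_j$ for every $i$, and since the left-hand side does not depend on $j$, all the numbers $|S\cap\Lambda_i|$ coincide. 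Thus $S$ intersects $\Delta$ evenly for each $\Delta\in Q\backslash R/Q$, i.e.\ $S\in\mathcal{L}$, and Proposition~\ref{prop:ns-nonnorm} bounds the number of such $S$ \emph{for a fixed $Q$} by $2^{\mathbf{c}(R)-\frac18|R\,{:}\,Q|}$.

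Next I would plug in the size bound on $Q$: from $|Q|<r^{0.501}\log_2^3{r}$ we get $|R\,{:}\,Q|=r/|Q|>r^{0.499}/\log_2^3{r}$, so for a fixed $Q$ there are at most $2^{\mathbf{c}(R)-\frac{r^{0.499}}{8\log_2^3{r}}}$ choices for $S$. Finally, since every subgroup of $R$ has a generating set of size at most $\log_2{r}$, there are at most $r^{\log_2{r}}=2^{\log_2^2{r}}$ possibilities for $Q$; summing over them yields
\[
|\mathcal{T}_{\mathrm{HA}}\cup\mathcal{T}_{\mathrm{SD}}\cup\mathcal{T}_{\mathrm{TW}}|\leq 2^{\log_2^2{r}}\cdot 2^{\mathbf{c}(R)-\frac{r^{0.499}}{8\log_2^3{r}}}=2^{\mathbf{c}(R)-\frac{r^{0.499}}{8\log_2^3{r}}+\log_2^2{r}},
\]
as required.

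The step I expect to be the main obstacle is the middle one: converting equitability of the $H$-orbit partition into the "intersects $\Delta$ evenly" statement. The delicate points there are keeping the left/right coset conventions straight (the $H$-orbits are right $Q$-cosets, the neighbourhood of $u$ is $Su$, and right-multiplication by $Q$ preserves and acts transitively on the right cosets within a fixed double coset), and making sure the constant $m_j$ is genuinely independent of the coset $\Lambda_i$. Everything after that reduction is either a direct appeal to Proposition~\ref{prop:ns-nonnorm} or elementary counting.
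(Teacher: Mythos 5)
Your proposal is correct and follows essentially the same route as the paper: reduce via Lemma~\ref{lem:Q} to a fixed non-normal $Q$, use Lemma~\ref{lem:orbit-equitable} on the $H$-orbit partition (the right $Q$-cosets) to show $S$ intersects each double coset evenly, apply Proposition~\ref{prop:ns-nonnorm}, and take a union bound over the at most $2^{\log_2^2 r}$ subgroups. The coset bookkeeping in your middle step (comparing neighbours of $1$ and $u\in Q$ in $\Lambda_j$ and letting $\Lambda_j u^{-1}$ sweep the double coset) is exactly the paper's computation, just phrased with $u^{-1}$ in place of $q$.
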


\begin{proof}
Let $\mathcal{Q}=\{Q\leq R:Q\ntrianglelefteq R,\,|Q|<r^{0.501}\log_2^3{r}\}$.
For each $Q\in\mathcal{Q}$, consider the set $\mathcal{N}_Q$ of inverse-closed subsets $S$ of $R$ such that the right $Q$-cosets in $R$ are precisely the orbits of some subgroup of $\Aut(\Cay(R,S))$. Then by Lemma~\ref{lem:Q} we have
\[
|\mathcal{T}_{\mathrm{HA}}\cup\mathcal{T}_{\mathrm{SD}}\cup\mathcal{T}_{\mathrm{TW}}|\leq\sum_{Q\in\mathcal{Q}}|\mathcal{N}_Q|.
\]

Take arbitrary $Q\in\mathcal{Q}$ and $S\in\mathcal{N}_Q$. Denote $\mathcal{D}=Q\backslash R/Q=\{QxQ: x\in R\}$. Let $\Delta\in\mathcal{D}$, and let $\Lambda$ be a right coset of $Q$ contained in $\Delta$. Recall by the definition of Cayley graphs that, for $x\in R$, the neighbourhood of $x$ in $\Cay(R,S)$ is $Sx$. For each $q\in Q$, Lemma~\ref{lem:orbit-equitable} asserts that $1$ and $q$ have the same number of neighbours in $\Lambda q$, which means
\[
|S\cap \Lambda q|=|Sq\cap \Lambda q|=|S\cap\Lambda|.
\]
This implies that for every right $Q$-coset contained in $\Delta$, say $\Lambda'$, we have $|S\cap\Lambda'|=|S\cap\Lambda|$. Since this $\Delta$ is chosen arbitrarily, the set $S$ must intersect $\Delta$ evenly for every $\Delta\in\mathcal{D}$. Hence by Proposition~\ref{prop:ns-nonnorm}, we obtain
\[
|\mathcal{N}_Q|\leq 2^{\mathbf{c}(R)-\frac{1}{8}|R\,{:}\,Q|}.
\]
Since $|Q|<r^{0.501}\log_2^3{r}$, it follows that
\[
|\mathcal{N}_Q|\leq2^{\mathbf{c}(R)-\frac{r^{0.499}}{8\log_2^3{r}}}.
\]
Note that $R$ has at most $2^{\log_2^2{r}}$ subgroups, which implies that $|\mathcal{Q}|\leq2^{\log_2^2{r}}$.
Therefore,
\[
|\mathcal{T}_{\mathrm{HA}}\cup\mathcal{T}_{\mathrm{SD}}\cup\mathcal{T}_{\mathrm{TW}}|\leq|\mathcal{Q}|\cdot2^{\mathbf{c}(R)-\frac{r^{0.499}}{8\log_2^3{r}}}=2^{\mathbf{c}(R)-\frac{r^{0.499}}{8\log_2^3{r}}+\log_2^2{r}}.\qedhere
\]
\end{proof}

Let $\Gamma$ be a graph on a vertex set $V$, and let $\mathcal{B}$ be a partition of $V$. If $\mathcal{B}$ is an equitable partition, that is, given any $B,C\in\mathcal{B}$, all vertices $v\in B$ have the same number of neighbours in $C$, then we denote this number by $e(B,C)$ for $(B,C)\in\mathcal{B}\times\mathcal{B}$. Let $\mathcal{B}$ be an equitable partition of $V$ into sets of equal size. Then $e(B,C)=e(C,B)$ for $B,C\in\mathcal{B}$. The \emph{odd quotient graph} $\Gamma^{\text{odd}}_{\mathcal{B}}$ of $\Gamma$ with respect to $\mathcal{B}$, defined in~\cite[Definition~6.1]{Morris-S2021}, is the graph whose vertices are the sets in $\mathcal{B}$, with an edge between $B$ and $C$ if and only if $e(B,C)$ is odd.
If $\mathcal{B}$ is the orbit partition of some $K\leq\Aut(\Gamma)$, then we call $\Gamma^{\text{odd}}_{\mathcal{B}}$ the odd quotient graph with respect to $K$.

\begin{lemma}\label{lem:HSHCASPA}
$|\mathcal{T}_{\mathrm{HS}}\cup\mathcal{T}_{\mathrm{HC}}\cup\mathcal{T}_{\mathrm{AS}}\cup\mathcal{T}_{\mathrm{PA}}|\leq 2^{\mathbf{c}(R)-\frac{r}{768\log_2r}+\log_2^2{r}+c_1}$ for some absolute constant $c_1$.
\end{lemma}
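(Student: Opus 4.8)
The plan is to bound $|\mathcal{T}_{\mathrm{HS}}\cup\mathcal{T}_{\mathrm{HC}}\cup\mathcal{T}_{\mathrm{AS}}\cup\mathcal{T}_{\mathrm{PA}}|$ by exploiting Proposition~\ref{prop:ns-G1}, which handles subsets evenly intersecting the preimages of $H$-orbits on $R/K$. Fix $S$ in one of these four sets, put $G=G(S)$, $K=K(S)=\Core_G(R)$, and pass to the faithful primitive action of $\overline{G}=G/K$ on $\overline{G}/\overline{R}$ with point stabilizer $\overline{R}$ and regular subgroup $H=\overline{G_1}$. By Lemma~\ref{lem:typeex}, the type $\mathrm{HA}$ cannot occur with an abelian or generalized dicyclic point stabilizer, so in these non-$\mathrm{HA}$ types the quotient $R/K$ is forced to avoid the two excluded families (I would need to check this carefully: the point is that a regular subgroup $H$ of a primitive group of type $\mathrm{HS},\mathrm{HC},\mathrm{AS},\mathrm{PA}$ cannot itself be abelian of exponent $>2$ or generalized dicyclic, which should follow from the structure theory of these primitive types — this is the analogue of how Lemma~\ref{lem:typeex} is used here). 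With $R/K$ not in the excluded families, Proposition~\ref{prop:ns-G1} applies and gives $|\mathcal{L}|\leq 2^{\mathbf{c}(R)-\frac{1}{96}|R/K|}$ for the set $\mathcal{L}$ of inverse-closed $S$ intersecting $\widetilde{\Delta}$ evenly for every $H$-orbit $\Delta$ on $R/K$.

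Next I would show that every $S\in\mathcal{T}_{\mathrm{HS}}\cup\mathcal{T}_{\mathrm{HC}}\cup\mathcal{T}_{\mathrm{AS}}\cup\mathcal{T}_{\mathrm{PA}}$ does in fact lie in such an $\mathcal{L}$. Since $K=\Core_G(R)$, the $K$-orbits on the vertex set of $\Cay(R,S)$ are the right $K$-cosets in $R$; these form an equitable partition by Lemma~\ref{lem:orbit-equitable}. The group $G_1$ normalizes $K$ and permutes the $K$-orbits, and $G_1K/K = H$ acts on $R/K$; for any $H$-orbit $\Delta$ on $R/K$, the element-counting argument of Lemma~\ref{lem:HASDTW} (applying Lemma~\ref{lem:orbit-equitable} to $1$ and to a point in another $K$-coset of $\widetilde{\Delta}$, using that neighbourhoods in the Cayley graph are $Sx$) shows $|S\cap \Lambda|$ is constant over right $K$-cosets $\Lambda\subseteq\widetilde{\Delta}$, i.e.\ $S$ intersects $\widetilde{\Delta}$ evenly. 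Hence $S\in\mathcal{L}$, and so $|\mathcal{T}_{\mathrm{HS}}\cup\cdots\cup\mathcal{T}_{\mathrm{PA}}|\leq\sum 2^{\mathbf{c}(R)-\frac{1}{96}|R/K|}$ where the sum is over the possible pairs $(K,H)$.

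To turn this into the stated bound I need a lower bound on $|R/K|=r/|K|$ and a count of the relevant data. Since $S\in\mathcal{X}$ we have $|K|\leq 96\log_2 r$, so $|R/K|\geq r/(96\log_2 r)$, giving $\frac{1}{96}|R/K|\geq \frac{r}{96\cdot 96\log_2 r}=\frac{r}{9216\log_2 r}$; I expect the $768$ in the statement to come from a more careful argument (perhaps using $\mathrm{Soc}(\overline G)$ has index not too large, or that in these O'Nan–Scott types one can take a smaller quotient), but morally this is where the $\frac{r}{768\log_2 r}$ term originates. The multiplicative overhead: $K$ ranges over normal subgroups of $R$ of bounded order (at most $2^{\log_2^2 r}$ choices, as in Lemma~\ref{lem:WR}), and for each the number of relevant overgroups $\overline G$ of $\overline R$ — primitive groups of the given type on at most $r$ points containing a regular subgroup — is bounded by a quantity absorbed into the $2^{\log_2^2 r}$ factor and the absolute constant $c_1$ (this is where a Lubotzky-type count of primitive groups, or the arguments of~\cite{Morris-S2021}, enter). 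Multiplying, $|\mathcal{T}_{\mathrm{HS}}\cup\mathcal{T}_{\mathrm{HC}}\cup\mathcal{T}_{\mathrm{AS}}\cup\mathcal{T}_{\mathrm{PA}}|\leq 2^{\mathbf{c}(R)-\frac{r}{768\log_2 r}+\log_2^2 r+c_1}$.

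The main obstacle I anticipate is the first step: verifying that for $S$ in these four O'Nan–Scott types the quotient $R/K$ is genuinely neither abelian of exponent greater than $2$ nor generalized dicyclic, so that Proposition~\ref{prop:ns-G1} is applicable. This requires knowing that a \emph{regular} subgroup of a finite primitive group of type $\mathrm{HS}$, $\mathrm{HC}$, $\mathrm{AS}$, or $\mathrm{PA}$ can never have that structure — a statement that plausibly follows from the classification of primitive groups with abelian or generalized dicyclic regular subgroups, but which needs to be cited or proved precisely. The element-counting (evenly-intersecting) step and the final bookkeeping of constants are routine once that structural input is in place.
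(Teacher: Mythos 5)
Your reduction to Proposition~\ref{prop:ns-G1} is sound as far as it goes: for $S$ in these four classes the point stabilizer $R/K$ of the primitive group $G(S)/K$ on $G(S)/R$ is neither abelian nor generalized dicyclic by Lemma~\ref{lem:typeex} (note it is the \emph{point stabilizer} $R/K$, not the regular subgroup $H=G_1K/K$, that matters here — your parenthetical worry is aimed at the wrong subgroup, but the conclusion you need is exactly what Lemma~\ref{lem:typeex} gives), and the argument that $S$ intersects each $\widetilde{\Delta}$ evenly is the same one the paper uses for the CD case. The genuine gap is in the final union bound. The set $\mathcal{L}$ of Proposition~\ref{prop:ns-G1} depends not just on $K$ but on the $H$-orbit partition of $R/K$, i.e.\ on the overgroup $G(S)$, and you assert that the number of relevant overgroups ``is bounded by a quantity absorbed into the $2^{\log_2^2 r}$ factor and the absolute constant $c_1$.'' No such bound is available: the groups in question have $|G_1|>2^{r^{0.499}}$, so the counting lemma for overgroups with small stabilizer (\cite[Lemma 3.2]{Morris-S2021}, used in Lemma~\ref{lem:Gsmall}) does not apply, and even for the single type CD the paper has to invoke a separate theorem of Morris--Spiga whose count is $2^{\log_2^4 r+\cdots}$, already too large for the $\log_2^2 r$ overhead you claim. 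For HS, HC, AS, PA the paper avoids counting overgroups altogether: it passes to the \emph{odd quotient graph} of $\Cay(R,S)$ with respect to $K$, uses the deep result \cite[Theorems 5.2, 5.3, 5.4]{Morris-S2021} that the number of possible quotient connection sets (hence odd quotient graphs) is bounded by an \emph{absolute constant}, and then bounds the number of $S$ with a prescribed odd quotient graph by a parity argument giving $2^{\mathbf{c}(R)-\frac12|\overline{R}\setminus\I(\overline{R})|}\leq 2^{\mathbf{c}(R)-\frac18|\overline{R}|}$ via Lemmas~\ref{lem:typeex} and~\ref{lem:inv-ea2}. That substitute for your missing overgroup count is the essential content of the lemma.

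A secondary point: even granting your missing count, your route yields a saving of $\frac{1}{96}|R/K|\geq\frac{r}{9216\log_2 r}$, whereas the stated exponent is $\frac{r}{768\log_2 r}$ with $768=8\cdot 96$ coming from the parity argument above, not from Proposition~\ref{prop:ns-G1}. This would not sink the overall proof of Theorem~\ref{Thm:main} (the HA/SD/TW term dominates anyway), but it means your argument, even if completed, proves a strictly weaker statement than the lemma as written.
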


\begin{proof}
Let $\mathcal{K}=\{K(S): S\in\mathcal{T}_{\mathrm{HS}}\cup\mathcal{T}_{\mathrm{HC}}\cup\mathcal{T}_{\mathrm{AS}}\cup\mathcal{T}_{\mathrm{PA}}\}$. For $N\in\mathcal{K}$, denote by $\overline{\mathcal{T}}_N$ the set of inverse-closed subsets $X$ of $R/N$ such that there exists a subgroup $A$ of $\Aut(\Cay(R/N,X))$ with the following conditions:
\begin{itemize}
  \item $R/N$ is maximal and core-free in $A$;
  \item the vertex stabilizer in $A$ has size greater than $2^{|R/N|^{0.499}}$;
  \item the primitive action of $A$ by right multiplication on the set of right $(R/N)$-cosets in $A$ is of type $\mathrm{HS}$, $\mathrm{HC}$, $\mathrm{AS}$ or $\mathrm{PA}$.
\end{itemize}
Let $\mathcal{G}_N$ be the set of (labeled) Cayley graphs on $R/N$ with connection set in $\overline{\mathcal{T}}_N$.  For each $\Gamma\in\mathcal{G}_N$, let $\mathcal{N}_\Gamma$ be the set of $S$ such that the odd quotient graph of $\Cay(R,S)$ with respect to $N$ is $\Gamma$.

From the argument in~\cite[Pages~446~and~447]{Morris-S2021} we see that
for each $S\in\mathcal{T}_{\mathrm{HS}}\cup\mathcal{T}_{\mathrm{HC}}\cup\mathcal{T}_{\mathrm{AS}}\cup\mathcal{T}_{\mathrm{PA}}$, there exists (not necessarily nontrivial) $N$ in $\mathcal{K}$ such that the odd quotient graph of $\Cay(R,S)$ with respect to $N$ lies in $\mathcal{G}_N$.
This implies that
\[
\mathcal{T}_{\mathrm{HS}}\cup\mathcal{T}_{\mathrm{HC}}\cup\mathcal{T}_{\mathrm{AS}}\cup\mathcal{T}_{\mathrm{PA}}
\subseteq\bigcup_{N\in\mathcal{K}}\bigcup_{\ \Gamma\in\mathcal{G}_N}\mathcal{N}_\Gamma.
\]

Take an arbitrary $N\in\mathcal{K}$ and write $\overline{R}=R/N$. It is known from \cite[Theorems 5.2, 5.3 and 5.4]{Morris-S2021} that $|\overline{\mathcal{T}}_N|$ is bounded above by $2^{(3\cdot29!)!}+2^{(442^2)!^6\cdot6!}$. Since $|\mathcal{G}_N|=|\overline{\mathcal{T}}_N|$, it follows that
\begin{equation}\label{UnionOfN}
|\mathcal{T}_{\mathrm{HS}}\cup\mathcal{T}_{\mathrm{HC}}\cup\mathcal{T}_{\mathrm{AS}}\cup\mathcal{T}_{\mathrm{PA}}|
\leq|\mathcal{K}|\cdot\big(2^{(3\cdot29!)!}+2^{(442^2)!^6\cdot6!}\big)\cdot\max\limits_{N\in\mathcal{K}}\max\limits_{\ \Gamma\in\mathcal{G}_N}|\mathcal{N}_\Gamma|.
\end{equation}
Now take $\Gamma\in\mathcal{G}_N$ and consider $|\mathcal{N}_{\Gamma}|$, namely, the number of choices for $S$ such that the odd quotient graph of $\Cay(R,S)$ with respect to $N$ is $\Gamma$. Let $S$ be such a choice. For each $x\in R$,
\[
(S\cap xN)^{-1}=S^{-1}\cap Nx^{-1}=S\cap x^{-1}N,
\]
which means that $S\cap x^{-1}N$ is determined by $S\cap xN$ when $xN\notin\mathcal{I}(\overline{R})$. According to the definition of odd quotient graph, this subset $S$ of $R$ must satisfy the following conditions:
\begin{itemize}
  \item for each $xN\in\mathcal{I}(\overline{R})$, the set $S\cap xN$ is an inverse-closed subset of $xN$;
  \item for each pair \{$xN,\,x^{-1}N\}\subseteq\overline{R}\setminus\mathcal{I}(\overline{R})$, the set $S\cap xN$ is a subset of $xN$ such that $|S\cap xN|$ is odd if and only if $xN$ is adjacent to $N$ in $\Gamma$, and the other set $S\cap x^{-1}N$ is determined by $S\cap xN$.
\end{itemize}
Since the odd quotient graph $\Gamma$ is given, the parity of $|S\cap xN|$ is determined for each $xN\in\overline{R}$.
Note from~\eqref{eq:parity} that the number of subsets of $xN$ with a given parity is $2^{|xN|-1}$. Hence the number of choices for $S$ is at most
\begin{align*}
&\prod_{xN\in\mathcal{I}(\overline{R})}2^{\mathbf{c}(xN)}\prod_{\{xN,\,x^{-1}N\}\subseteq\overline{R}\setminus\mathcal{I}(\overline{R})}2^{|xN|-1}\\
={}&\left(\prod_{xN\in\mathcal{I}(\overline{R})}2^{\mathbf{c}(xN)}\prod_{\{xN,\,x^{-1}N\}\subseteq\overline{R}\setminus\mathcal{I}(\overline{R})}2^{\frac{|xN|
+|x^{-1}N|}{2}}\right)\cdot2^{-\frac{1}{2}|\overline{R}\setminus\mathcal{I}(\overline{R})|}\\
={}&2^{\mathbf{c}(R)-\frac{1}{2}|\overline{R}\setminus\mathcal{I}(\overline{R})|},
\end{align*}
which means $|\mathcal{N}_{\Gamma}|\leq2^{\mathbf{c}(R)-\frac{1}{2}|\overline{R}\setminus\mathcal{I}(\overline{R})|}$.
It is known from Lemma~\ref{lem:typeex} that $R/N$ is not an elementary abelian $2$-group. Hence by Lemma~\ref{lem:inv-ea2} we have $|\overline{R}\setminus\mathcal{I}(\overline{R})|\geq|\overline{R}|/4$,
and so
\[
|\mathcal{N}_{\Gamma}|\leq2^{\mathbf{c}(R)-\frac{1}{8}|\overline{R}|}.
\]
Since $N\in\mathcal{K}$, we have $|N|\leq96\log_2r$ and so $|\overline{R}|=r/|N|\geq r/96\log_2r$. This leads to
\[
|\mathcal{N}_{\Gamma}|\leq2^{\mathbf{c}(R)-\frac{r}{768\log_2r}},
\]
which together with~\eqref{UnionOfN} and the observation $|\mathcal{K}|\leq 2^{\log_2^2r}$ completes the proof.
\end{proof}

\begin{lemma}\label{lem:CD}
$|\mathcal{T}_{\mathrm{CD}}|\leq 2^{\mathbf{c}(R)-\frac{r}{9216\log_2r}+\log_2^4{r}+\frac{1}{5}\log_2^3{r}+c_2\log_2^2{r}+\log_2{r}}$ for some absolute constant $c_2$.
\end{lemma}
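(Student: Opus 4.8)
The plan is to handle $\mathcal{T}_{\mathrm{CD}}$ in the same spirit as the other O'Nan--Scott types: apply Proposition~\ref{prop:ns-G1} to the core $K(S)$ to obtain the main saving, and then bound the number of distinct ``quotient configurations'' that can arise, the error terms $\log_2^4 r+\frac15\log_2^3 r+c_2\log_2^2 r+\log_2 r$ being the logarithm of that count. Fix $S\in\mathcal{T}_{\mathrm{CD}}$ and put $G=G(S)$, $K=K(S)=\Core_G(R)$, $\overline{\phantom{u}}\colon R\to R/K$, and $H=G_1K/K$. Since $R$ is maximal in $G$, the group $G/K$ acts faithfully and primitively of type $\mathrm{CD}$ on $G/R$ with point stabilizer $R/K$, so Lemma~\ref{lem:typeex} forces $R/K$ to be neither abelian of exponent greater than $2$ nor generalized dicyclic; moreover $G/K$ is transitive on $R/K$, properly contains the regular subgroup $R/K$, and $|R/K|\geq r/(96\log_2 r)$ because $|K|\leq 96\log_2 r$. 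As in the proof of Lemma~\ref{lem:HASDTW} --- using Lemma~\ref{lem:orbit-equitable} applied to the $K$-orbit partition of the vertex set $R$ of $\Cay(R,S)$ and the fact that $G_1$ fixes the vertex $1$ and hence permutes the right $K$-cosets while preserving each quantity $|S\cap Kx|$ --- the set $S$ intersects $\widetilde{\Delta}$ evenly for every $H$-orbit $\Delta$ on $R/K$. Hence $S$ belongs to the set $\mathcal{L}$ of Proposition~\ref{prop:ns-G1}, which depends only on $K$ and on the partition $\mathcal{P}$ of $R/K$ into $H$-orbits and has size at most $2^{\mathbf{c}(R)-\frac{1}{96}|R/K|}\leq 2^{\mathbf{c}(R)-\frac{r}{9216\log_2 r}}$. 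Thus $|\mathcal{T}_{\mathrm{CD}}|\leq\mathcal{C}\cdot 2^{\mathbf{c}(R)-\frac{r}{9216\log_2 r}}$, where $\mathcal{C}$ counts the pairs $(K,\mathcal{P})$ realized by some $S\in\mathcal{T}_{\mathrm{CD}}$, and it remains to bound $\mathcal{C}$ by $2^{\log_2^4 r+\frac15\log_2^3 r+c_2\log_2^2 r+\log_2 r}$.

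For the count, $K$ is a normal subgroup of $R$ of order at most $96\log_2 r$, hence $\leq\log_2 r$-generated, so there are at most $r^{\log_2 r}=2^{\log_2^2 r}$ choices for $K$ (this feeds the $c_2\log_2^2 r$ term). Given $K$, I would pin down $\mathcal{P}$ from the $\mathrm{CD}$ structure of $G/K$ on $G/R$: writing $\Soc(G/K)=T^{m\ell}$ with $T$ non-abelian simple and $\ell\geq 2$, the point stabilizer $R/K$ contains a subgroup isomorphic to $T^{\ell}$, so $|T|^{\ell}\leq|R/K|\leq r$. Since $|T|\geq 60>2^{5}$, this forces $\ell\leq\frac15\log_2 r$ and $|T|\leq r^{1/2}$, so by the classification of finite simple groups there are $O(r^{1/2})$ choices for $T$, at most $\log_2 r$ for $\ell$, and --- as $|G/R|=|G_1|=|T|^{(m-1)\ell}$ while $|R/K|\leq r$ bounds the transitive ``top group'' acting on the $m$ diagonal factors of a component --- at most $r$ choices for $m$ (this feeds the $\log_2 r$ term). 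Finally, once $K,T,\ell,m$ are fixed, the action of $G/K$ on $G/R$, and hence $\mathcal{P}$, is reconstructed from its $\ell$ simple-diagonal components together with the transitive action permuting them; because $|R/K|\leq r$ forces each relevant component group to have order polynomial in $r$ and generating rank at most $\log_2 r$, a Lubotzky-type enumeration (as in the proof of Lemma~\ref{lem:Gsmall}) gives at most $2^{O(\log_2^3 r)}$ possibilities per component, hence at most $2^{O(\log_2^4 r)}$ over the $\leq\log_2 r$ components, and tracking constants (the $\frac15\log_2^3 r$ being the componentwise cross-term) yields the bound on $\mathcal{C}$.

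The main obstacle is this last step. In contrast to the types $\mathrm{HS},\mathrm{HC},\mathrm{AS},\mathrm{PA}$, where \cite{Morris-S2021} bounds the number of quotient configurations by an absolute constant, for type $\mathrm{CD}$ the parameter $m$ and the groups acting on the $m$ diagonal factors of each component genuinely grow with $r$. The point is that the index $|G:R|=|G_1|$, though possibly enormous, is ``spent'' on enlarging $m$ rather than the point stabilizer $R/K$, whose order is at most $r$; one must convert this into a polylogarithmic bound on the generating ranks and a polynomial bound on the orders of the groups to be enumerated componentwise, and then check that $\mathcal{P}$ really is a function of the enumerated data. For the last point it helps to use the duality identifying the $H$-orbits on $R/K=(G/K)/H$ with the suborbits of the $\mathrm{CD}$ primitive action of $G/K$ on $G/R=(G/K)/(R/K)$.
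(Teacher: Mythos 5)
Your treatment of the main saving is exactly the paper's: you verify via Lemma~\ref{lem:typeex} that $R/K$ is neither abelian of exponent greater than $2$ nor generalized dicyclic, show that $S$ intersects $\widetilde{\Delta}$ evenly for every $H$-orbit $\Delta$ on $R/K$ (because $G_1$ fixes $S$ setwise and permutes the right $K$-cosets), and then apply Proposition~\ref{prop:ns-G1} together with $|K|\leq 96\log_2 r$ to get the factor $2^{\mathbf{c}(R)-\frac{r}{9216\log_2 r}}$. That part is correct and is the paper's argument verbatim. The difference --- and the gap --- is in the configuration count $\mathcal{C}$. The paper does not attempt to enumerate CD-type configurations from first principles: for each core $N$ it bounds the number of possible quotient groups $Y=G(S)/N$ (as permutation groups on $R/N$, which determine the orbit partition $\mathcal{P}$) by quoting the proof of \cite[Theorem~5.10]{Morris-S2021}, which yields precisely $2^{\log_2^4(r/n)+\frac{1}{5}\log_2^3(r/n)+c_3\log_2^2(r/n)+\log_2(r/n)}$; multiplying by the $2^{\log_2^2 r}$ choices of $N$ gives the stated exponent. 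Your attempted rederivation of that count is where the proof is incomplete: the bounds $\ell\leq\frac{1}{5}\log_2 r$, $|T|\leq r^{1/2}$ and $m\leq r$ are fine, but the claims that a Lubotzky-type enumeration gives $2^{O(\log_2^3 r)}$ possibilities per component, that the orbit partition $\mathcal{P}$ on $R/K$ is determined by the enumerated data, and that ``tracking constants'' produces exactly the error terms $\log_2^4 r+\frac{1}{5}\log_2^3 r+c_2\log_2^2 r+\log_2 r$ are all asserted rather than proved, and establishing them is essentially the content of Morris--Spiga's Theorem~5.10. You correctly flag this as the main obstacle, but the proposal does not overcome it.

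One smaller point: your closing ``duality'' is not correct as stated. The $H$-orbits on $R/K\cong (G/K)/H$ are the $(H,H)$-double cosets in $G/K$, whereas the suborbits of the primitive action of $G/K$ on $G/R=(G/K)/(R/K)$ are the $(R/K,R/K)$-double cosets; these are orbits of different point stabilizers in different actions and there is no canonical identification between them. This remark is not load-bearing in your argument, but it should not be relied upon when trying to recover $\mathcal{P}$ from the primitive CD structure; the clean route is the paper's, namely to record the permutation group $G(S)/N$ on $R/N$ itself, from which the $H$-orbits are immediate.
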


\begin{proof}
Let $\mathcal{K}=\{K(S): S\in \mathcal{T}_{\mathrm{CD}}\}$. Take an arbitrary $N\in\mathcal{K}$ and write $n=|N|$. Denote
\begin{align*}
\mathcal{M}_N&=\{S\in\mathcal{T}_{\mathrm{CD}}: K(S)=N\},\\
\mathcal{F}_N&=\{G(S)/N:S\in\mathcal{M}_N\}.
\end{align*}
For each $Y\in\mathcal{F}_N$, let $\mathcal{N}_{Y}=\{S\in\mathcal{M}_N:G(S)/N=Y\}$. It is clear that
\[
\mathcal{M}_N=\bigcup_{Y\in\mathcal{F}_N}\mathcal{N}_{Y},
\]
which yields that
\begin{equation}\label{unionofcd}
|\mathcal{M}_N|\leq|\mathcal{F}_N|\cdot\max_{Y\in\mathcal{F}_N}|\mathcal{N}_{Y}|.
\end{equation}
Note that each $G(S)/N\in\mathcal{F}_N$ acts faithfully on $G(S)/R$ as a primitive group of type $\mathcal{\mathrm{CD}}$ with a regular subgroup $G(S)_1N/N\cong G(S)_1$ of order $2^{r^{0.499}}\geq2^{|R/N|^{0.499}}$. By the proof of \cite[Theorem 5.10]{Morris-S2021}, we have
\begin{equation}\label{GK}
|\mathcal{F}_N|\leq2^{\log_2^4{\frac{r}{n}}+\frac{1}{5}\log_2^3{\frac{r}{n}}+c_3\log_2^2{\frac{r}{n}}+\log_2{\frac{r}{n}}}
\end{equation}
for some absolute constant $c_3$.

Take arbitrary $Y\in\mathcal{F}_N$ and $S\in\mathcal{N}_Y$, so that $Y=G(S)/N$. Let $H=G(S)_1N/N$. For an $H$-orbit $\Delta$ on $R/N$, let $\widetilde{\Delta}$ be the union of the right $N$-cosets in $\Delta$.
Let $Nx\in\Delta$ and $g\in G(S)_1$. Note that $S$ is exactly the neighborhood of the vertex $1$, and $g$ is an automorphism of $\Cay(R,S)$ that fixes $1$. It follows that $S^g=S$ and so
\[
|S\cap Nx|=|(S\cap Nx)^g|=|S^g\cap(Nx)^g|=|S\cap(Nx)^g|.
\]
This means that $S$ intersects $\widetilde{\Delta}$ evenly for every $H$-orbit $\Delta$ on $R/N$.
Since Lemma~\ref{lem:typeex} implies that $R/N$ is neither abelian nor generalized dicyclic, it follows from Proposition~\ref{prop:ns-G1} that
\[
|\mathcal{N}_{Y}|\leq 2^{\mathbf{c}(R)-\frac{1}{96}\cdot\frac{r}{n}}
\]
for all $Y\in\mathcal{F}_N$. Thus, by~\eqref{unionofcd} and~\eqref{GK}, we obtain
\[
|\mathcal{M}_N|\leq2^{\log_2^4{\frac{r}{n}}+\frac{1}{5}\log_2^3{\frac{r}{n}}+c_3\log_2^2{\frac{r}{n}}+\log_2{\frac{r}{n}}}\cdot2^{\mathbf{c}(R)-
\frac{r}{96n}}=2^{\mathbf{c}(R)-\frac{r}{96n}+\log_2^4{\frac{r}{n}}+\frac{1}{5}\log_2^3{\frac{r}{n}}+c_3\log_2^2{\frac{r}{n}}+\log_2{\frac{r}{n}}}.
\]
Moreover, $n=|N|\leq 96\log_2r$ as $N\in\mathcal{K}$. Hence
\[
|\mathcal{M}_N|\leq2^{\mathbf{c}(R)-\frac{r}{96\cdot96\log_2r}+\log_2^4{r}+\frac{1}{5}\log_2^3{r}+c_3\log_2^2{r}+\log_2{r}}.
\]
Since $R$ has at most $2^{\log_2^2{r}}$ subgroups, there are at most $2^{\log_2^2{r}}$ choices for $N$. Consequently,
\begin{align*}
|\mathcal{T}_{\mathrm{CD}}|&=2^{\log_2^2{r}}\cdot\max_{N\in\mathcal{K}}|\mathcal{M}_N|\\
&\leq2^{\log_2^2{r}}\cdot 2^{\mathbf{c}(R)-\frac{r}{96\cdot96\log_2r}+\log_2^4{r}+\frac{1}{5}\log_2^3{r}+c_3\log_2^2{r}+\log_2{r}}\\
&=2^{\mathbf{c}(R)-\frac{r}{9216\log_2r}+\log_2^4{r}+\frac{1}{5}\log_2^3{r}+c_2\log_2^2{r}+\log_2{r}},
\end{align*}
where $c_2=c_3+1$ is an absolute constant.
\end{proof}

Comparing the three upper bounds in Lemmas~\ref{lem:HASDTW}, \ref{lem:HSHCASPA} and~\ref{lem:CD}, it is clear that for large enough $r$, the bound $2^{\mathbf{c}(R)-\frac{r^{0.499}}{8\log_2^3{r}}+\log_2^2{r}}$ in Lemma~\ref{lem:HASDTW} is the largest. Thereby, in view of~\eqref{TR-division}, we obtain the following result.

\begin{proposition}\label{prop:totalT}
Let $R$ be a finite group of order $r$ that is neither abelian of exponent greater than $2$ nor generalized dicyclic. For large enough $r$ we have $|\mathcal{T}|\leq2^{\mathbf{c}(R)-\frac{r^{0.499}}{8\log_2^3{r}}+\log_2^2{r}+2}$.
\end{proposition}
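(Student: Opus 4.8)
The plan is to assemble Proposition~\ref{prop:totalT} directly from the three estimates already proved, namely Lemmas~\ref{lem:HASDTW}, \ref{lem:HSHCASPA} and~\ref{lem:CD}, via the partition~\eqref{TR-division}. Group the eight O'Nan--Scott types into the three blocks $\mathcal{T}_{\mathrm{HA}}\cup\mathcal{T}_{\mathrm{SD}}\cup\mathcal{T}_{\mathrm{TW}}$, $\mathcal{T}_{\mathrm{HS}}\cup\mathcal{T}_{\mathrm{HC}}\cup\mathcal{T}_{\mathrm{AS}}\cup\mathcal{T}_{\mathrm{PA}}$ and $\mathcal{T}_{\mathrm{CD}}$, and put
\begin{align*}
a_1&=2^{\mathbf{c}(R)-\frac{r^{0.499}}{8\log_2^3{r}}+\log_2^2{r}},\\
a_2&=2^{\mathbf{c}(R)-\frac{r}{768\log_2{r}}+\log_2^2{r}+c_1},\\
a_3&=2^{\mathbf{c}(R)-\frac{r}{9216\log_2{r}}+\log_2^4{r}+\frac{1}{5}\log_2^3{r}+c_2\log_2^2{r}+\log_2{r}},
\end{align*}
so that $|\mathcal{T}|\leq a_1+a_2+a_3$ by~\eqref{TR-division} and the three lemmas.

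First I would check that $a_1=\max\{a_1,a_2,a_3\}$ for all sufficiently large $r$, which is purely a comparison of exponents. Comparing $a_1$ with $a_2$ reduces to verifying $\frac{r}{768\log_2{r}}-\frac{r^{0.499}}{8\log_2^3{r}}\geq c_1$, and comparing $a_1$ with $a_3$ reduces to verifying
\[
\frac{r}{9216\log_2{r}}-\frac{r^{0.499}}{8\log_2^3{r}}\ \geq\ \log_2^4{r}+\tfrac{1}{5}\log_2^3{r}+(c_2-1)\log_2^2{r}+\log_2{r};
\]
both hold for large $r$ since the left-hand sides are of order $r/\log_2{r}$ while the subtracted term $r^{0.499}/\log_2^3{r}=o(r/\log_2{r})$ and the right-hand side is only polylogarithmic in $r$. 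Once $a_1$ is identified as the largest, the conclusion is immediate:
\[
|\mathcal{T}|\leq a_1+a_2+a_3\leq 3a_1<2^2a_1=2^{\mathbf{c}(R)-\frac{r^{0.499}}{8\log_2^3{r}}+\log_2^2{r}+2},
\]
as asserted.

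Since all of the substantive work -- the structural reductions, the use of Spiga's suborbit estimates, the O'Nan--Scott case division, and the counting of inverse-closed subsets that intersect cosets evenly via Propositions~\ref{prop:ns-nonnorm} and~\ref{prop:ns-G1} -- is already packaged into Lemmas~\ref{lem:HASDTW}, \ref{lem:HSHCASPA} and~\ref{lem:CD}, there is essentially no obstacle left at this stage: the proof is just the observation of which exponent dominates. The only point worth flagging is that the dominant term comes from the $\mathrm{HA}/\mathrm{SD}/\mathrm{TW}$ block, where the saving over $\mathbf{c}(R)$ is merely $r^{0.499}/(8\log_2^3{r})$, far smaller than the $r/(\text{const}\cdot\log_2{r})$ savings available for the remaining types; this is precisely the bottleneck responsible for the exponent $r^{0.499}$ appearing in Theorem~\ref{Thm:main}.
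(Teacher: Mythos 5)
Your proposal is correct and is essentially identical to the paper's own argument: the paper likewise observes that the bound from Lemma~\ref{lem:HASDTW} dominates those of Lemmas~\ref{lem:HSHCASPA} and~\ref{lem:CD} for large $r$, and then sums the three bounds via~\eqref{TR-division}, absorbing the factor $3<2^2$ into the exponent. Your explicit exponent comparisons and the closing remark about the $\mathrm{HA}/\mathrm{SD}/\mathrm{TW}$ bottleneck are accurate.
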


\section{Proof of main results}

We are now ready to prove Theorem~\ref{Thm:main} and its corollaries.

\begin{proof}[Proof of Theorem~$\ref{Thm:main}$]
For the group $R$ that is neither abelian of exponent greater than $2$ nor generalized dicyclic, recall from the beginning of Section~\ref{Sec4} that
\[
\mathcal{Z}=\{S\subseteq R: S^{-1}=S,\,\Aut(\Cay(R,S))>R\}=(\mathcal{Z}\setminus\mathcal{T})\cup\mathcal{T}.
\]
Let $c_4$ be a constant number such that $r\geq c_4$ is large enough for Propositions~\ref{prop:ZtoT} and~\ref{prop:totalT} to hold. Comparing the two upper bounds in Propositions~\ref{prop:ZtoT} and~\ref{prop:totalT}, it is clear that the bound
\begin{equation}\label{boundforT}
2^{\mathbf{c}(R)-\frac{r^{0.499}}{8\log_2^3{r}}+\log_2^2{r}+2}
\end{equation}
in Proposition~\ref{prop:totalT} is the larger one when $r$ is sufficiently large, that is, whenever $r\geq c_5$ for some constant number $c_5$. Let $c_0=\max\{c_4,c_5\}$. It follows that, whenever $r\geq c_0$, we derive an upper bound for $|\mathcal{Z}|$ by doubling~\eqref{boundforT}, so that
\[
|\mathcal{Z}|\leq 2^{\mathbf{c}(R)-\frac{r^{0.499}}{8\log_2^3{r}}+\log_2^2{r}+3}.
\]
Recall that the number of inverse-closed subsets of $R$ is $2^{\mathbf{c}(R)}$. Therefore,
\[
P(R)=1-\frac{|\mathcal{Z}|}{2^{\mathbf{c}(R)}}\geq 1-2^{-\frac{r^{0.499}}{8\log_2^3{r}}+\log_2^2{r}+3}
\]
for $r\geq c_0$. This completes the proof.
\end{proof}

\begin{proof}[Proof of Corollary~$\ref{unlab}$]
Let $P(R)$ be the proportion of inverse-closed subsets $S$ of $R$ such that $\Cay(R,S)$ is a GRR of $R$ as in Theorem~$\ref{Thm:main}$. Recall that $\CG(R)$ is the set of Cayley graphs over $R$ up to isomorphism and that $\GRR(R)$ is the set of GRRs over $R$ up to isomorphism.

Let $\mathcal{S}(R)$ be the set of inverse-closed subsets of $R$. Since $|\mathcal{S}(R)|(1-P(R))$ counts the number of labeled Cayley graphs of $R$ that are not GRRs, it is clear that
\[
|\CG(R)|-|\GRR(R)|\leq |\mathcal{S}(R)|(1-P(R)).
\]
Similar to the proof of~\cite[Theorem 1.5]{Morris-S2021}, we have
\[
|\GRR(R)|\geq\frac{|\mathcal{S}(R)|P(R)}{|\Aut(R)|}.
\]
If follows that
\[
\frac{|\GRR(R)|}{|\CG(R)|}=\left(1+\frac{|\CG(R)|-|\GRR(R)|}{|\GRR(R)|}\right)^{-1}\geq\left(1+\frac{|\mathcal{S}(R)|(1-P(R))}{|\mathcal{S}(R)|P(R)/|\Aut(R)|}\right)^{-1}.
\]
Note that every $\alpha\in\Aut(R)$ is determined by the images of elements in a generating set for $R$ under $\alpha$ and that $R$ has a generating set of size at most $\log_2{r}$. We have $|\Aut(R)|\leq 2^{a(r)}$ with $a(r)=\log_2^2{r}$, and so
\[
\frac{|\GRR(R)|}{|\CG(R)|}\geq\left(1+\frac{|\mathcal{S}(R)|(1-P(R))}{|\mathcal{S}(R)|P(R)/2^{a(r)}}\right)^{-1}
=\frac{1}{1-2^{a(r)}+2^{a(r)}/P(R)}.
\]
Moreover, Theorem~\ref{Thm:main} states that $P(R)\geq1-2^{-b(r)}$ for $r\geq c_0$, where $b(r)=\frac{r^{0.499}}{8\log_2^3{r}}-\log_2^2{r}-3$ while $c_0$ is a constant number. Thus, whenever $r\geq c_0$,
\begin{align*}
\frac{|\GRR(R)|}{|\CG(R)|}&\geq\frac{1}{1-2^{a(r)}+2^{a(r)}/\big(1-2^{-b(r)}\big)}\\
&=\frac{1-2^{-b(r)}}{1-2^{-b(r)}+2^{a(r)-b(r)}}=1-\frac{2^{a(r)-b(r)}}{1-2^{-b(r)}+2^{a(r)-b(r)}}>1-2^{a(r)-b(r)}.
\end{align*}
Since $a(r)-b(r)=-\frac{r^{0.499}}{8\log_2^3{r}}+2\log_2^2{r}+3$, the proof is complete.
\end{proof}

\begin{proof}[Proof of Corollary~$\ref{normality}$]

First assume that $R$ is an abelian group of exponent greater than $2$. From~\cite[Theorem~1.7]{Dobson2016} we deduce that the number of inverse-closed subsets $S$ of $R$ with $|\Aut(\Cay(R,S))|=2|R|$ is at least $2^{\mathbf{c}(R)}-2^{\frac{|\mathcal{I}(R)|}{2}+\frac{11r}{24}+2\log_2^2{r}+2}$, where recall that $\mathbf{c}(R)$ and $|\mathcal{I}(R)|$ satisfy~\eqref{eq:cr}. Therefore, the proportion of inverse-closed subsets $S$ of $R$ such that $\Cay(R,S)$ is a normal Cayley graph of $R$ is at least
\begin{equation}\label{abeliannormal}
\frac{2^{\mathbf{c}(R)}-2^{\frac{|\mathcal{I}(R)|}{2}+\frac{11r}{24}+2\log_2^2{r}+2}}{2^{\mathbf{c}(R)}}=1-2^{-\frac{r}{24}+2\log_2^2{r}+2}.
\end{equation}

Next assume that $R$ is a generalized dicyclic group. Since $R\ncong \Quat_8\times \Cy_2^m$ for any nonnegative integer $m$, we see from~~\cite[Theorem~1.5]{Morris-SV2015} that the number of inverse-closed subsets $S$ of $R$ with $|\Aut(\Cay(R,S))|=2|R|$ is at least $2^{\mathbf{c}(R)}-2^{\frac{|\mathcal{I}(R)|}{2}+\frac{23r}{48}+2\log_2^2{r}+5}$. Hence the proportion of inverse-closed subsets $S$ of $R$ such that $\Cay(R,S)$ is a normal Cayley graph of $R$ is at least
\begin{equation}\label{gedycnormal}
\frac{2^{\mathbf{c}(R)}-2^{\frac{|\mathcal{I}(R)|}{2}+\frac{23r}{48}+2\log_2^2{r}+5}}{2^{\mathbf{c}(R)}}=1-2^{-\frac{r}{48}+2\log_2^2{r}+5}.
\end{equation}

Now assume that $R$ is neither abelian of exponent greater than $2$ nor generalized dicyclic. As $r$ is large enough, it follows from Theorem~$\ref{Thm:main}$ that the proportion of inverse-closed subsets $S$ of $R$ with $\Cay(R,S)$ normal is at least
\begin{equation}\label{mainnormal}
1-2^{-\frac{r^{0.499}}{8\log_2^3{r}}+\log_2^2{r}+3}.
\end{equation}

Comparing the three lower bounds \eqref{abeliannormal}, \eqref{gedycnormal} and~\eqref{mainnormal} for large enough $r$, we see that the last one is the smallest. Accordingly, the proportion of inverse-closed subsets $S$ of $R$ such that $\Cay(R,S)$ is a normal Cayley graph of $R$ is at least $1-2^{-\frac{r^{0.499}}{8\log_2^3{r}}+\log_2^2{r}+3}$ for all $R$ such that $R\ncong \Quat_8\times \Cy_2^m$ for any nonnegative integer $m$.
\end{proof}

\section*{Acknowledgments}
The second author was supported by the Melbourne Research Scholarship. The authors are grateful to the anonymous referee for valuable comments and suggestions, which have helped improve the paper.


\end{document}